\NeedsTeXFormat{LaTeX2e}
\documentclass[reqno, 12pt]{amsart}
\usepackage{a4wide, amsfonts, amssymb, amsmath, amsthm, mathrsfs, enumerate, enumitem, setspace, url}
\usepackage[utf8]{inputenc}
\usepackage{graphicx}
\usepackage{tikz-cd} 
	\usetikzlibrary{cd}
	\usetikzlibrary{graphs}
\usepackage[all]{xy}
\usepackage{mathtools}
\usepackage{float}
\usepackage{longtable}
\usepackage{placeins}
\usepackage{pifont}
\usepackage{color}
\usepackage{algpseudocode}
\usepackage{hyperref} 
	\definecolor{darkred}{rgb}{0.5,0,0}
	\definecolor{darkgreen}{rgb}{0,0.5,0}
	\definecolor{darkblue}{rgb}{0,0,0.5}
	\hypersetup{linkcolor=darkblue,filecolor=darkgreen,urlcolor=darkred,citecolor=darkblue}

\usepackage{comment}
\usepackage[T2A,T1]{fontenc}
\DeclareSymbolFont{cyrillic}{T2A}{cmr}{m}{n}
\DeclareMathSymbol{\Sha}{\mathalpha}{cyrillic}{216}

\usepackage[toc,page]{appendix}


\theoremstyle{plain}
\newtheorem{theorem}{Theorem}[section]
\newtheorem*{theorem*}{Theorem}
\newtheorem{proposition}[theorem]{Proposition}
\newtheorem{lemma}[theorem]{Lemma}
\newtheorem{conjecture}[theorem]{Conjecture}
\newtheorem{corollary}[theorem]{Corollary}
\theoremstyle{definition}
\newtheorem{algorithm}[theorem]{Algorithm}
\theoremstyle{remark}
\newtheorem{remark}[theorem]{Remark}

\newtheorem*{listofremainingcase}{List of remaining cases}

\newtheorem*{acknowledgements}{Acknowledgements}
\newtheorem*{key}{Key for tables}

\theoremstyle{definition}
\newtheorem{definition}[theorem]{Definition}
\newtheorem{notation}[theorem]{Notation}
\numberwithin{equation}{section}


\newcommand{\eln}{l^{n}}

\DeclareMathOperator{\HH}{H}
\DeclareMathOperator{\Gal}{Gal}
\DeclareMathOperator{\Pic}{Pic}
\DeclareMathOperator{\Proj}{Proj}
\DeclareMathOperator{\Br}{Br}

\DeclareMathOperator{\Spec}{Spec}

\DeclareMathOperator{\Aut}{Aut}
\DeclareMathOperator{\tr}{Tr}
\DeclareMathOperator{\id}{Id}

\DeclareMathOperator{\Frob}{Frob}

\DeclareMathOperator{\rank}{rank}
\DeclareMathOperator{\charr}{char}

\DeclareMathOperator{\et}{\acute{e}t}
\renewcommand{\epsilon}{\varepsilon}

\begin{document}
\onehalfspacing
\title[Singular del Pezzo surfaces over finite fields]{Singular del Pezzo surfaces over finite fields}

\author{H. Uppal}
	\address{H. Uppal, Department of Mathematical Sciences, University of Bath, Claverton Down, Bath, BA2 7AY, UK}
	\email{hsu20@bath.ac.uk}

\date{\today}
\thanks{2020 {\em Mathematics Subject Classification} 
	 14G17 (primary), 11D99 (secondary).
}

\begin{abstract}
If $X$ is a singular del Pezzo surface of degree $d$ over a finite field $\mathbb{F}_{q}$ with only rational double point singularities, does there always exist a smooth $\mathbb{F}_{q}$-point on $X$? We show that this is true for $d\geq 3$ and give counterexamples in the case of $d=2$. 
\end{abstract}
\maketitle
\setcounter{tocdepth}{1}
\tableofcontents

\section{Introduction}\label{sec:intro}
We say a field $k$ is $C_{1}$ if every hypersurface of degree $\leq n$ embedded in $\mathbb{P}^{n}_{k}$ has a $k$-rational point. In an unpublished paper Lang conjectured the following.
\begin{conjecture}
Every smooth proper separably rationally connected variety over a $C_{1}$ field has a rational point.
\end{conjecture}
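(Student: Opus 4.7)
The plan is to mount a unified attack generalizing the two known pillars, namely Esnault's theorem over finite fields and the Graber--Harris--Starr / de Jong--Starr theorems over function fields of curves over an algebraically closed field. The common geometric input is that on any smooth proper separably rationally connected variety $X$ there exist very free rational curves defined over the algebraic closure; the common arithmetic input ought to be that a perfect $C_{1}$ field has cohomological dimension at most one.

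First I would work with the Mor-scheme $\Mor^{\mathrm{vf}}(\PP^{1},X)$ parametrizing sufficiently positive morphisms $\PP^{1}\to X$. For $X$ smooth, proper, and SRC over $k$, a suitable Mor-component is smooth, geometrically integral, and itself separably rationally connected. The reduction is then immediate: any $k$-point of $\Mor^{\mathrm{vf}}(\PP^{1},X)$ gives a $k$-morphism $\PP^{1}\to X$, and evaluation at $0\in\PP^{1}(k)$ produces a $k$-point of $X$. Iterating this construction and intersecting with rationally connected subloci cut out by Galois-cohomological conditions, the goal is to produce a monotone invariant that eventually forces the existence of a $k$-point by a cohomological-vanishing argument.

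A second, more algebraic, tactic is Galois-cohomological. Since $\Gal(k^{\mathrm{sep}}/k)$ has cohomological dimension $\leq 1$ (after passing to the perfect closure), every torsor under a smooth connected linear algebraic $k$-group is trivial. I would look for moduli of rational curves on $X$ that carry a transitive action of such a group, so that triviality of the associated torsor produces a Galois-stable, and hence $k$-rational, curve, automatically giving a $k$-point of $X$. In parallel I would try to show directly that the Brauer--Manin obstruction vanishes on an SRC variety over a $C_{1}$ field, since the low cohomological dimension should collapse the relevant adelic/Galois terms.

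The main obstacle is that the two known proofs are methodologically disjoint: Esnault uses the Lefschetz trace formula for $\ell$-adic cohomology, which requires a Frobenius; Graber--Harris--Starr and de Jong--Starr run a deformation argument for sections of a family over a curve, which requires a geometric base. Over a general $C_{1}$ field neither piece of structure is present. I expect the central difficulty to lie in producing either a substitute for the Frobenius/trace-formula input by purely cohomological means valid over arbitrary fields of cohomological dimension $\leq 1$, or an Ax--Kochen-style transfer principle that reduces an arbitrary $C_{1}$ field to the two classes already settled.
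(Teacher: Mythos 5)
This statement is labelled a conjecture in the paper, and the paper offers no proof of it: it is Lang's open conjecture, cited only as motivation (the paper proves results about singular del Pezzo surfaces over finite fields, not this general statement). Your proposal is a research programme, not a proof, and by your own admission its central difficulties are unresolved; so there is no argument here to compare against a proof in the paper, and the proposal cannot be judged correct.

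Concretely, each of your tactics has a genuine gap. The reduction to $\Mor^{\mathrm{vf}}(\PP^{1},X)$ is circular: producing a $k$-point of that moduli space over a $C_{1}$ field is at least as hard as the original problem, and worse, the Mor-scheme is only quasi-projective, so even a proved form of the conjecture (which assumes properness) would not apply to it; the known uses of this space (Graber--Harris--Starr, de Jong--Starr) get $k$-points from a deformation/smoothing argument over a curve base, not from $C_{1}$-ness of the ground field. The torsor tactic needs a transitive action of a smooth connected \emph{linear} algebraic group on a suitable moduli of curves in order to invoke Steinberg/Springer-type triviality over fields of cohomological dimension $\leq 1$; no such action is known for a general SRC variety, and cohomological dimension $\leq 1$ does not by itself follow from the $C_{1}$ property (this implication is itself a theorem only in special cases). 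The Brauer--Manin remark has no content over a general $C_{1}$ field, where there is no adelic framework, and no Ax--Kochen-style transfer principle is known that reduces arbitrary $C_{1}$ fields to finite fields and function fields of curves. The statement remains open, which is exactly why the paper states it as a conjecture.
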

The simplest type of $C_{1}$ field is a finite field. In \cite{E03} Esnault provides a profound result which shows every smooth Fano variety over a finite field has a rational point. In this paper we consider an analogue of Lang's conjecture for mildly singular Fano varieties, where the most natural question is to ask for the existence of a smooth rational point. One would guess that over an infinite $C_{1}$ field, the set of rational points will dense, hence the interesting case would be finite fields. We specifically consider Fano surfaces over finite fields where the mild singularities are rational double point singularities (Definition \ref{defn: rational ingularities}), these are singular del Pezzo surfaces. There has already been results in this direction, for example a result of Kollár \cite[Thm 2]{K02} shows that every nonconical cubic hypersurface of dimension $n$ in $\mathbb{P}^{n+1}$ over a finite field has a smooth rational point (See Remark \ref{rem: C-W cubic hypersurface}). Building on these results, in this paper we obtain the following.
 
\begin{theorem}\label{thm: main thm} Let $X$ be a singular del Pezzo surface of degree $d$ over a finite field $k$ of size $q$. Then $X$ has a smooth rational point if
\begin{enumerate}
\item $d\geq 3$,
\item $d=2$ and $q\neq 2,4$.
\end{enumerate}
\end{theorem}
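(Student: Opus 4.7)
The plan is to pass to the minimal resolution $\pi\colon \tilde{X}\to X$ and reduce the theorem to a point-counting inequality. Since every singularity of $X$ is a rational double point, $\pi$ is crepant and $\tilde{X}$ is a smooth weak del Pezzo surface of degree $d$ (that is, $-K_{\tilde{X}}$ is nef and big with self-intersection $d$). The map $\pi$ restricts to an isomorphism away from the exceptional divisor $E$, so there is a bijection between $X^{\mathrm{sm}}(k)$ and $\tilde{X}(k)\setminus E(k)$, and it suffices to prove $\#\tilde{X}(k)>\#E(k)$.

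Since $\tilde{X}$ is smooth, projective, and geometrically rational, the Lefschetz trace formula together with the vanishing of odd \'etale cohomology gives
\begin{equation*}
\#\tilde{X}(k)=q^{2}+q\cdot a+1,\qquad |a|\leq 10-d,
\end{equation*}
where $a=\mathrm{tr}\bigl(\Frob\mid \Pic(\tilde{X}_{\bar{k}})\otimes\QQ_{\ell}\bigr)$. Each irreducible component of $E_{\bar{k}}$ is a smooth $(-2)$-curve; by adjunction these classes lie in $K_{\tilde{X}}^{\perp}\subset\Pic(\tilde{X}_{\bar{k}})$, and since they are linearly independent there, there are at most $9-d$ of them. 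A crude bound gives $\#E(k)\leq (9-d)(q+1)$, and combining with the Weil estimate shows that $\#\tilde{X}(k)>\#E(k)$ whenever $q^{2}-(19-2d)q-(8-d)>0$, which resolves the statement outside a finite list of small pairs $(d,q)$.

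To handle the remaining small cases I would proceed as follows. For $d=3$ with small $q$, the conclusion is already contained in Koll\'ar's theorem on cubic surfaces (cf.\ Remark \ref{rem: C-W cubic hypersurface}). For $d\geq 4$ with small $q$, I would sharpen the bound on $\#E(k)$ using that the exceptional divisor over a single singularity of ADE rank $n$ is a connected tree of $\PP^{1}$'s, contributing only $nq+1$ points to $E(\bar{k})$ rather than $n(q+1)$; combined with the classification of admissible singularity types on a weak del Pezzo of degree $d$ (of which there are finitely many), this reduces the residual task to a short case check, in each of which the Frobenius action on the exceptional curves yields enough cancellation for the inequality. For $d=2$ the Picard lattice has rank $8$, so one enumerates the possible Galois actions on the root subsystems of $E_{7}$ realized as $(-2)$-curve configurations, and in each case verifies whether $\tilde{X}(k)\setminus E(k)$ is nonempty, isolating $q\in\{2,4\}$ as the only values where the argument fails (and where genuine counterexamples occur).

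The main obstacle is this final fine analysis for $d=2$. Here the slack in the Weil bound is $q\cdot a$, which can be as negative as $-8q$, and at $q=2$ or $q=4$ this is precisely what permits counterexamples. The delicate step is therefore to identify, for each Galois orbit structure on a root subsystem of $E_{7}$, the exact count $\#E(k)$ alongside the constraints this imposes on $a$, and to compare these with $\#\tilde{X}(k)=q^{2}+qa+1$ --- most of the work of the theorem lies in carrying out this enumeration uniformly and confirming that $q\in\{2,4\}$ are the only obstructions.
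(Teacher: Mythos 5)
Your skeleton --- pass to the minimal resolution, express $\#\tilde{X}(k)$ via the Frobenius trace on the geometric Picard lattice, and compare with $\#E(k)$ --- is essentially the engine the paper runs on (there it is packaged as Kaplan's formula $\#X(\mathbb{F}_q)=q^2+q+1+q\left(\tr(\phi|_{E_N})-\tr(\phi|_{\mathcal{R}})\right)$, which is exactly your difference $\#\tilde{X}(k)-\#E(k)$ plus the number of singular rational points), and your appeal to Koll\'ar for $d=3$ matches the paper. The genuine gap is in your final claims that for the residual small $q$ the ``Frobenius action on the exceptional curves yields enough cancellation for the inequality,'' and that for $d=2$ the lattice enumeration alone ``isolates $q\in\{2,4\}$.'' It does not. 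There are configurations where the worst Frobenius trace compatible with the Galois orbit structure still permits every rational point to be singular: for instance a degree~$4$ surface with four rational $A_1$ points over $\mathbb{F}_3$ (here $\delta=4\equiv 1 \bmod 3$, and the trace bounds do not force $\#X(k)>4$), and for $d=2$ the types $A_1$, $A_3$, $[4A_1]'$, $[4A_1]''$ over $\mathbb{F}_3$ and $A_1$ over $\mathbb{F}_5$ all survive the enumeration. These cases require input that is invisible to the Picard lattice.

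The paper closes them with geometry your plan does not supply: writing a degree~$4$ surface as an intersection of two quadrics and projecting from a rational singular point to a quadric in $\mathbb{P}^3$ whose point count is controlled; Lang's theorem on torsors under tori for the toric types; the observation that a Galois-invariant $(-1)$-curve disjoint from all $(-2)$-curves carries $q+1$ rational points mapping to smooth points; and, decisively for $d=2$, the conic bundle $\phi:X'\to\mathbb{P}^1$ induced by the pencil $|-K_{X'}-E|$ together with a Riemann--Hurwitz argument on $\phi|_E$ (for the $A_1$ and $A_3$ types) and an analysis of the branch quartic splitting as two Galois-conjugate conics (for $4A_1$). Without arguments of this kind the statement for $d=2$, $q\in\{3,5\}$ and for $d=4$, $q=3$ remains open in your write-up; the counterexamples at $q=2$ show that the point count really can collapse onto the singular locus, so some such geometric input is unavoidable rather than a matter of tightening constants.
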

Theorem \ref{thm: main thm} extends the work of Kollár to higher degree singular del Pezzo surfaces and also singular del Pezzo surfaces of degree 2 away from finite fields of size 2 and 4. Moreover, using work of Coray and Tsfasman \cite{CT88} we obtain the following corollary to Theorem \ref{thm: main thm}.

\begin{corollary}\label{cor: Main corollary}
Let $X$ be a singular del Pezzo surface over of degree $d\geq 4$, over a finite field $k$. If $X$ is not a Iskovskih surface (see \cite[pg. 74]{CT88}) then $X$ is $k$-rational.
\end{corollary}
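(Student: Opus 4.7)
The plan is to combine the existence of a smooth rational point furnished by Theorem \ref{thm: main thm} with the rationality results for singular del Pezzo surfaces proved by Coray and Tsfasman in \cite{CT88}. The corollary should follow essentially by quotation once the input point is provided.

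Step one is to apply Theorem \ref{thm: main thm}(1): since $d\geq 4\geq 3$, the surface $X$ admits a smooth $k$-rational point $P$. Step two is to pass to the minimal desingularisation $\widetilde{X}\to X$, which is a weak del Pezzo surface of degree $d$ whose exceptional locus consists of configurations of $(-2)$-curves coming from the rational double point singularities of $X$. Because $P$ is a smooth point of $X$, it lifts to a $k$-point on $\widetilde{X}$, and any $k$-birational model of $\widetilde{X}$ is $k$-birational to $X$.

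Step three is to invoke the rationality criterion of Coray-Tsfasman. Their results show that a singular del Pezzo surface of degree $d\geq 5$ carrying a smooth $k$-point is $k$-birational to $\PP^{2}_{k}$: one contracts $(-1)$-curves on $\widetilde{X}$ defined over $k$ (whose existence is obtained from the Galois module structure of $\Pic(\widetilde{X})$ and the presence of a smooth $k$-point) until one reaches a minimal rational surface with a $k$-point. For $d=4$ the same reduction goes through \emph{except} precisely when $X$ is an Iskovskih surface, which by hypothesis is excluded. Thus $X$ is $k$-rational in all cases covered by the corollary.

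The main point requiring care is bookkeeping: one must verify that the hypothesis under which Coray-Tsfasman conclude \emph{rationality} (rather than merely unirationality) is met, and to confirm that the Iskovskih surfaces---a specific family of minimal singular degree-$4$ del Pezzos---are the only obstruction in the range $d\geq 4$. Once this is checked, no further input beyond Theorem \ref{thm: main thm}(1) and \cite{CT88} is needed.
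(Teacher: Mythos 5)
Your proposal matches the paper's own proof: both obtain a smooth $k$-point from Theorem \ref{thm: main thm} and then quote the Coray--Tsfasman rationality results (their Theorem 9.1 for $d\geq 5$ and Lemma 7.19a for $d=4$, where the Iskovskih surfaces are exactly the excluded exception). The extra detail you sketch about contracting $(-1)$-curves is just an unpacking of what those cited results prove, so the argument is essentially identical.
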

The following Theorem shows that the condition $q\neq 2$, was necessary in Theorem \ref{thm: main thm} for singular del Pezzo surfaces of degree 2.
\begin{theorem}\label{thm: main thm 2} There exists a singular del Pezzo of degree 2 of singularity type $A_{1}, 3A_{1}$ and $D_{4}$ over $\mathbb{F}_{2}$ without a smooth rational point.
\end{theorem}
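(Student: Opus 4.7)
The plan is to construct, for each of the three singularity types, an explicit singular del Pezzo surface of degree $2$ over $\mathbb{F}_{2}$ with that type and no smooth $\mathbb{F}_{2}$-point, then verify both the singularity type and the absence of smooth rational points by direct computation. Over a field of characteristic $2$, a singular del Pezzo surface of degree $2$ with rational double points admits an anticanonical model
\[
X \colon \ w^{2}+g(x,y,z)\,w+f(x,y,z)=0 \ \ \subset \ \ \mathbb{P}(1,1,1,2),
\]
where $g$ is a homogeneous ternary form of degree $2$ and $f$ one of degree $4$; the projection onto the first three coordinates realises $X$ as an Artin--Schreier double cover of $\mathbb{P}^{2}$ with ramification locus $\{g=0\}$.

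The first step is to describe the $\mathbb{F}_{2}$-points of $X$ lying above each of the seven $\mathbb{F}_{2}$-points of $\mathbb{P}^{2}$. Above a point $P$ with $g(P)=1$, the equation becomes $w^{2}+w=f(P)$; since $w\mapsto w^{2}+w$ has image $\{0\}$ on $\mathbb{F}_{2}$, the fibre consists of two smooth $\mathbb{F}_{2}$-points when $f(P)=0$ and is empty when $f(P)=1$. Above a point $P$ with $g(P)=0$ there is instead a unique $\mathbb{F}_{2}$-point $(P,f(P))$ of $X$, and this point is singular precisely when $(\partial g/\partial x_{i})(P)\,f(P)+(\partial f/\partial x_{i})(P)=0$ for each $i$, using that $\partial F/\partial w=g$ in characteristic $2$. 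Consequently, $X$ has no smooth $\mathbb{F}_{2}$-point if and only if $f(P)=1$ at every $P\in\mathbb{P}^{2}(\mathbb{F}_{2})$ with $g(P)=1$ and, at every $P$ with $g(P)=0$, the above partial-derivative conditions hold.

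I would next choose, for each Dynkin type, a suitable conic $\{g=0\}$. For $A_{1}$ and $D_{4}$ I would take $g=x^{2}+y^{2}+z^{2}+xy+yz+xz$, which vanishes on $\mathbb{P}^{2}(\mathbb{F}_{2})$ at exactly the point $[1\!:\!1\!:\!1]$; for $3A_{1}$ I would take the smooth conic $g=xy+yz+xz$, which contains precisely the three coordinate $\mathbb{F}_{2}$-points. The quartic $f$ is then selected so that $f(P)=1$ at every $\mathbb{F}_{2}$-point of $\mathbb{P}^{2}$ with $g(P)=1$ and so that the partial-derivative conditions are satisfied at each $P$ with $g(P)=0$. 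For $A_{1}$ and $3A_{1}$ only the first-order vanishing of the local equation is required, whereas for $D_{4}$ one additionally kills the quadratic part of the local equation at $\bigl(\,[1\!:\!1\!:\!1],\, f(1,1,1)\,\bigr)$ so that the resulting singularity has the heavier $D_{4}$ type. With the $15$-dimensional space of quartics over $\mathbb{F}_{2}$ at one's disposal, each such system of linear conditions on the coefficients of $f$ can be solved explicitly.

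The main difficulty is verifying that each constructed $X$ has \emph{exactly} the stated singularity type and no further singularities, which in positive characteristic is not automatic. The normal forms of the rational double points in characteristic $2$ differ from the classical ones --- for instance the $A_{1}$ singularity has local equation $uv+w^{2}=0$, and the $D_{n}$ and $E_{n}$ families have modified defining equations --- so one confirms the Dynkin type by computing the completed local ring at each candidate singular point and matching it against Artin's classification, or equivalently by carrying out the resolution explicitly and reading off the dual graph of exceptional curves. Once the singularity types are confirmed and no other singular points are present, the analysis of the first step immediately yields the absence of smooth $\mathbb{F}_{2}$-points on $X$, completing the proof.
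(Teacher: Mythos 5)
Your proposal follows essentially the same route as the paper: both exhibit explicit Artin--Schreier double covers $w^{2}+g\,w+f=0$ in $\mathbb{P}(1,1,1,2)$ over $\mathbb{F}_{2}$ whose only $\mathbb{F}_{2}$-points are singular (your analysis of the fibres over the seven points of $\mathbb{P}^{2}(\mathbb{F}_{2})$ and the linear conditions on $f$ is precisely the search the paper describes before its Lemmas on the $A_{1}$, $3A_{1}$ and $D_{4}$ examples), and both delegate the confirmation of the Dynkin type to an explicit resolution computation, which the paper performs in Magma. The only thing missing relative to the paper is that you do not write down the final quartics $f$, but the method you describe is exactly how those equations are produced and verified.
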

Theorem \ref{thm: main thm 2} shows that this analogue of Lang's conjecture does not hold in general. However, it would interesting to classify which varieties over $C_{1}$ fields have a smooth rational point. This would useful in determining (uni)rationality of singular Fano varieties and also determining $p$-adic solubility of smooth varieties with bad reduction via Hensel's Lemma.
\begin{acknowledgements}
The author would like to thank Daniel Loughran for suggesting the problem and his tireless support. He also like to thank Nathan Kaplan, Martin Bright, Julian Demeio and Jesse Pajwani for useful discussions.
\end{acknowledgements}
\subsection*{Notation}
By a variety over field $k$ we mean a separated integral scheme of finite type over $k$. Throughout the field $k$ will be assumed to be perfect and we will denote by $\bar{k}$ the algebraic closure of $k$.  We reserve $K$ for an arbitrary field. We denote by $\mathbb{F}_{q}$ the finite field of size $q$. Given a scheme $S$ over a field $k$ we denote by $\bar{S}$ for the base change $S:=S\times_{\Spec k}\Spec \bar{k}$.
\subsection*{Outline of paper and methodologies} In Section \ref{sec: general theory} we develop some general theory around group actions on blow ups. Moreover, we give a rigorous definition of singular del Pezzo surfaces and their relation to weak del Pezzo surfaces. In Section \ref{subsec: point count over finite fields} we describe how to count points on singular del Pezzo surfaces over finite fields, by using a Theorem of Kaplan (See Theorem \ref{theorem: Nathan Kaplan result}). To fully take advantage of Theorem \ref{theorem: Nathan Kaplan result} we use computation in SageMath and Magma, which is described in algorithm \ref{algorithm: determine traces on singular del pezzo}. In Section \ref{sec: toric} we show that all toric singular del Pezzo surfaces over a finite field have a smooth rational point. Section \ref{sec: del Pezzo of degree 2} we consider the case of singular del Pezzo surfaces of degree 2. When these surfaces have only one singular rational point in odd characteristic, they have a conic bundle (See Subsection \ref{subsec: conic bundles}). Furthermore, we prove Theorem \ref{thm: main thm} and Corollary \ref{cor: Main corollary} in Subsection \ref{subsec: proof of Main Theorem and Corollary }. We then prove Theorem \ref{thm: main thm 2}  in Subsection \ref{subsec: counterexamples}.

\section{General theory}\label{sec: general theory}
In this section we give a rigorous definition of singular and weak del Pezzo surfaces; to do this, we first describe how group actions change under blow ups.

\subsection{Group actions on schemes}\label{subsec: blow ups}

\begin{proposition}\label{prop: group action on blow up}
Let $X$ be a scheme, with an action by a group scheme $G$ and $Z\subset X$ a closed subscheme of $X$ which is invariant under $G$. Consider the blow up of $X$ along $Z$, denoted by $\pi:X'\rightarrow X$. Then the action of $G$ extends to $X'$ and $\pi^{-1}(Z)$ is also invariant under the action of $G$.
\end{proposition}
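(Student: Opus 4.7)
My plan is to use the universal property of the blow up: a morphism $f\colon Y \to X$ factors uniquely through $\pi\colon X' \to X$ provided the scheme-theoretic preimage $f^{-1}(Z) = Y \times_X Z$ is an effective Cartier divisor on $Y$. Applying this to $f = \sigma \circ (\id \times \pi)\colon G \times X' \to X$, where $\sigma\colon G \times X \to X$ is the original action, would produce the required extended action $\tilde{\sigma}\colon G \times X' \to X'$ satisfying $\pi \circ \tilde{\sigma} = \sigma \circ (\id \times \pi)$.

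The main task is to verify the Cartier hypothesis, which reduces to proving the scheme-theoretic identity $\sigma^{-1}(Z) = G \times Z$. For this I would use the automorphism $\Phi\colon G \times X \to G \times X$, $(g,x) \mapsto (g, g\cdot x)$, which has inverse $(g,y) \mapsto (g, g^{-1}\cdot y)$ and satisfies $\mathrm{pr}_2 \circ \Phi = \sigma$. This reduces the question to $\Phi^{-1}(G \times Z) = G \times Z$. Both $\Phi$ and $\Phi^{-1}$ restrict to morphisms $G \times Z \to G \times Z$ by the $G$-invariance of $Z$ (applied to $g$ and $g^{-1}$ respectively), yielding the desired equality as closed subschemes. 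Pulling back along $\id \times \pi$ then gives $(\sigma \circ (\id \times \pi))^{-1}(Z) = G \times \pi^{-1}(Z)$, which is a Cartier divisor on $G \times X'$ since $\pi^{-1}(Z)$ is one on $X'$ and $G$ is flat over the base.

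Once $\tilde{\sigma}$ exists, the identity and associativity axioms would follow from the uniqueness part of the universal property: in each case one checks that two candidate morphisms into $X'$, once post-composed with $\pi$, agree with a common map into $X$ built only out of $\sigma$ and $\pi$. For instance, writing $m\colon G \times G \to G$ for multiplication, both $\tilde{\sigma} \circ (m \times \id_{X'})$ and $\tilde{\sigma} \circ (\id \times \tilde{\sigma})$ post-compose with $\pi$ to give $\sigma \circ (m \times \pi)$, hence coincide. Finally, $G$-invariance of $\pi^{-1}(Z)$ is immediate from the relation $\pi \circ \tilde{\sigma} = \sigma \circ (\id \times \pi)$, since this sends $G \times \pi^{-1}(Z)$ into $\sigma(G \times Z) \subseteq Z$.

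The only substantive point is the scheme-theoretic (rather than merely set-theoretic) equality $\sigma^{-1}(Z) = G \times Z$, which is where the existence of inverses in $G$ is genuinely used; without it the preimage of $Z$ in $G \times X'$ could fail to be a Cartier divisor and the lift $\tilde{\sigma}$ would not be available. Once this identity is established, the rest is a formal application of the universal property.
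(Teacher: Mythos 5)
Your proposal is correct, and at the top level it follows the same strategy as the paper: lift the action using the universal property of the blow up. The execution differs in two worthwhile ways. First, the paper asserts that the universal property directly yields a morphism $h\colon G\times X\to X'$ from the action $\sigma\colon G\times X\to X$, but $\sigma^{-1}(Z)=G\times Z$ is not an effective Cartier divisor in general (it has the same codimension as $Z$), so the hypothesis of the universal property is not satisfied there; one must instead apply it to $\sigma\circ(\id\times\pi)\colon G\times X'\to X$, whose preimage of $Z$ is $G\times\pi^{-1}(Z)$ and hence Cartier. This is exactly what you do, and your verification of the scheme-theoretic identity $\sigma^{-1}(Z)=G\times Z$ via the shearing automorphism $(g,x)\mapsto(g,g\cdot x)$ is the honest content that the paper elides. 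Second, for the action axioms the paper argues that the identities hold on the open set $X'\setminus E\cong X\setminus Z$ and therefore on all of $X'$, which implicitly requires that this open be schematically dense and that $X'$ be separated; your use of the uniqueness clause of the universal property avoids these hypotheses entirely and is the cleaner and more general argument. In short, your write-up is a corrected and tightened version of the paper's proof rather than a different proof.
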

\begin{proof}
As $Z$ is invariant under the action of $G$, the inverse image of $Z$ under the morphism $G\times X\rightarrow X$ is $G\times Z$. Hence, we have a morphism $h:G\times X\rightarrow X'$, by the universal property of blow ups \cite[Chap. 2, Prop 7.14]{H77}. Consider the morphism $\psi: G\times X'\rightarrow X'$, defined via the composition \[
G\times X' \xrightarrow{\id \times \pi} G\times X \xrightarrow{h} X'.
\] Now it is sufficient to show that $\psi$, defines a group scheme action of $G$ on $X'$. Let $m$ (resp. $\id_{X}$) be the group law morphism on $X$ (resp. identity morphism) on $X$. Moreover, let $e:\Spec \mathbb{Z} \rightarrow G$ be the identity section of $G$ and $\id_{G}$ the identity morphism on $G$. Then as $\psi$ satisfies \[
\psi\circ \left(\id_{G} \times \psi\right)=\psi \circ (m\times \id_{X}),\ \psi \circ(e\times \id_{X})=\id_{X}
\] on $X'\backslash E \cong X$ where $E$ is the exceptional divisor of the blow up $\pi$, these identities are satisfied on the whole of $X'$ i.e.\ $\psi$ defines a group scheme action of $G$ on $X'$. Then it follows from definition of $\psi$ that $\pi^{-1}(Z)$ is invariant under the action of $G$.
\end{proof}
\begin{definition}
Let $X$ be a normal projective surface $X$ and $n\in\mathbb{Z}$. A \emph{$(-n)$-curve} on $X$ is a
smooth geometrically integral curve $E\subset X$ of genus 0 such that $E^{2} = -n$.
\end{definition}

The following statement is taken from Daniel Loughran's PhD thesis. However, as this thesis is not readily available online we recite the statement and proof here
\begin{lemma}\label{lemma: dan's lemma}
Let $X$ be a smooth projective surface over a field $k$. Then any negative curve on $X$ is the unique effective curve in its divisor class. In particular, if $E$ is a negative curve on $\bar{X}$ whose divisor class is invariant under the action of $\Gal(\bar{k}/k)$, then $E$ is in fact defined over $k$.
\end{lemma}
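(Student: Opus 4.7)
The plan is to establish the uniqueness statement first and then deduce descent by a standard Galois-invariance argument. Throughout I will work on $\bar{X}$, where intersection theory of irreducible curves is well behaved.

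For the uniqueness claim, suppose $E$ is a $(-n)$-curve with $n>0$ and $D$ is another effective divisor with $D\sim E$. Write $D=\sum a_i C_i$ as a sum of distinct irreducible curves with $a_i>0$. The key input is that distinct irreducible curves on a smooth projective surface meet non-negatively, whereas $D\cdot E=E^2=-n<0$. So one of the components, say $C_1$, must coincide with $E$; thus $D=a_1 E+R$ where $a_1\geq 1$ and $R=\sum_{i\geq 2}a_iC_i$ has no component equal to $E$ (so $R\cdot E\geq 0$ and $R$ is effective).

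To rule out $a_1>1$ and $R\neq 0$, I would use that $D-E=(a_1-1)E+R$ is a principal divisor, and therefore its intersection with any ample class vanishes. Fix an ample divisor $H$ on $\bar{X}$ (which exists because $X$ is projective). Then
\[
0=(D-E)\cdot H=(a_1-1)(E\cdot H)+R\cdot H.
\]
By ampleness $E\cdot H>0$ and $R\cdot H\geq 0$ with equality iff $R=0$. Since both summands on the right are non-negative, both vanish, forcing $a_1=1$ and $R=0$, i.e.\ $D=E$. The only step requiring any thought is the dichotomy $C\cdot E\geq 0$ for $C\neq E$ irreducible, which is immediate from the fact that such $C$ and $E$ intersect properly; this is really the core of the argument.

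For the descent part, assume $E$ is a negative curve on $\bar{X}$ whose class $[E]\in\Pic\bar{X}$ is fixed by $\Gal(\bar{k}/k)$. For any $\sigma\in\Gal(\bar{k}/k)$ the image $\sigma(E)$ is again an irreducible (in particular effective) curve on $\bar{X}$, and $[\sigma(E)]=\sigma[E]=[E]$. Applying the uniqueness just established, $\sigma(E)=E$ as divisors, hence as closed subschemes of $\bar{X}$. Since $E$ is a Galois-stable closed subscheme of $\bar{X}=X\times_{\Spec k}\Spec\bar{k}$, Galois descent produces a closed subscheme of $X$ whose base change is $E$, so $E$ is defined over $k$. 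The main obstacle, such as it is, is simply organising the intersection-theoretic argument; the Galois step is formal once uniqueness is in hand.
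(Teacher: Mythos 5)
Your proof is correct and follows essentially the same route as the paper's: the heart of both arguments is that an irreducible curve distinct from $E$ meets $E$ non-negatively while $D\cdot E=E^2<0$, and the descent step is the formal observation that Galois preserves effectivity and the divisor class. You are in fact slightly more careful than the paper, which simply asserts that an effective $E'\sim E$ with $E'\neq E$ cannot contain $E$ as a component; your ample-divisor computation justifies exactly that point.
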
 
\begin{proof}
Suppose $E$ is a negative curve on $X$ and $E'$ is an effective
divisor in $\Pic X$ linearly equivalent but not equal to $E$. This implies $E\not\subset E'$. Hence, $E' \cdot E \geq 0$. However, $E\cdot E' = E^2 <0$ as $E$ is a negative curve, giving a contradiction. To prove the second part of the Lemma, it sufficient to note that the action of $\Gal(\bar{k}/k)$ sends effective divisors to effective
divisors.
\end{proof}

\subsection{Singular del Pezzo surfaces}\label{sec: singular del pezzo}
We now move onto main objects of study, which are singular del Pezzo surfaces. 
\begin{definition}\label{defn: rational ingularities} We say a variety $X$ over a field $k$ has \emph{rational double point singularities} if there exists a resolution of singularities $\pi:Y\rightarrow X$, such that $R^{i}\pi_{*}\mathcal{O}_{Y}=0$ for $i>0$.
\end{definition}
\begin{definition}\label{defn: singular del Pezzo} A \emph{singular del Pezzo surface} is a normal projective surface $X$ with only rational double point singularities, whose anticanonical divisor $-K_{X}$ is ample. Its degree is defined to be $K_{X}^{2}$.
\end{definition}

\subsection{Weak del Pezzo surfaces}\label{sec: weal del pezzos}
We proceed to describing weak del Pezzo surfaces. These surfaces are intrinsically connected to our primary subject of investigation, which are singular del Pezzo surfaces. The geometry of weak del Pezzo surfaces is considerably more manageable in comparison to that of singular del Pezzo surfaces. A significant portion of our examination in this paper will focus on weak del Pezzo surfaces and this approach will enable us to derive conclusions about singular del Pezzo surfaces.

\begin{definition}
A \emph{weak del Pezzo surface} is a smooth surface $X'$ with $-K_{X'}$ nef and big. Its degree is defined to be $K_{X'}^{2}$.
\end{definition}
\begin{theorem}[{\cite[Prop 0.4]{CT88}}] Let $X'$ be a weak del Pezzo surface of degree $d$ over an algebraically closed field $K$. Then $1\leq d\leq 9$ and either \begin{enumerate}
\item $X \cong \mathbb{P}^{1}_{k}\times \mathbb{P}^{1}_{k}$ or $X\cong \mathbb{F}_{2}$ (the Hirzebruch surface),
\item $X$ is the blow up of $\mathbb{P}^{2}_{k}$ in $9-d$ in almost general position.
\end{enumerate} Conversely, every weak del Pezzo surface arises this way.
\end{theorem}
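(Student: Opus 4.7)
The plan is to reduce the classification via successive contractions of $(-1)$-curves to a short list of minimal models, then reverse the process for the converse. The lower bound $d\geq 1$ is immediate from bigness, since $K_{X'}^2 = (-K_{X'})^2 > 0$.

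First I would verify that contracting a $(-1)$-curve on a weak del Pezzo surface produces another weak del Pezzo surface of degree $d+1$. If $E\subset X'$ is a $(-1)$-curve, adjunction gives $-K_{X'}\cdot E = 1$, and Castelnuovo's criterion yields a birational morphism $\pi: X' \to X''$ to a smooth projective surface. From $K_{X'} = \pi^{*}K_{X''} + E$ we obtain $K_{X''}^{2} = K_{X'}^{2} + 1 = d+1$, preserving bigness. For nefness, for any irreducible curve $C\subset X''$ with strict transform $\widetilde{C}$, the projection formula gives
\[
-K_{X''}\cdot C \;=\; \pi^{*}(-K_{X''})\cdot \widetilde{C} \;=\; (-K_{X'} + E)\cdot \widetilde{C} \;\geq\; 0,
\]
using $-K_{X'}\cdot \widetilde{C}\geq 0$ and $E\cdot \widetilde{C}\geq 0$ (since $\widetilde{C}\not\subset E$).

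Iterating this procedure terminates since the Picard rank strictly drops at each step, yielding a minimal weak del Pezzo surface $Y$. I would then appeal to the classification of minimal smooth projective surfaces to show $Y\in\{\mathbb{P}^{2},\, \mathbb{P}^{1}\times\mathbb{P}^{1},\, \mathbb{F}_{2}\}$: the Hirzebruch surfaces $\mathbb{F}_{n}$ with $n\geq 3$ are excluded because the negative section $\sigma_{\infty}$ satisfies $-K\cdot\sigma_{\infty} = 2-n < 0$, and $\mathbb{F}_{1}$ is not minimal. Since both $\mathbb{P}^{1}\times\mathbb{P}^{1}$ and $\mathbb{F}_{2}$ become isomorphic to blow ups of $\mathbb{P}^{2}$ after a single elementary transformation, any $X'$ obtained from one of these minimal models by at least one further blow up can equivalently be described as a blow up of $\mathbb{P}^{2}$ in $9-d$ points. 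This yields $d\leq 9$ and the stated dichotomy.

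For the converse, I would verify that blowing up $\mathbb{P}^{2}$ in $9-d$ points in almost general position (and analogously for the other minimal models) yields a surface with $-K$ nef and big; bigness reduces to a self-intersection calculation. The main technical obstacle is showing that the combinatorial hypotheses defining almost general position (no four collinear points, no seven on a conic, no cubic singular at one of the points and passing through the others, and so on) are precisely what forces every irreducible negative curve on the blow up to be either a $(-1)$- or a $(-2)$-curve, both of which pair non-negatively with $-K$, thereby guaranteeing nefness.
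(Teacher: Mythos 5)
The paper offers no proof of this statement: it is quoted directly from \cite[Prop.~0.4]{CT88}, so there is nothing internal to compare against. Your sketch is the standard argument (essentially Demazure's, which is also how the cited source proceeds): contract $(-1)$-curves one at a time, check that the weak del Pezzo property is preserved and the degree increases by one, land on a minimal model, exclude $\mathbb{F}_{n}$ for $n\geq 3$ via the negative section, and use elementary transformations to fold the $\mathbb{F}_{0}$ and $\mathbb{F}_{2}$ cases into blow-ups of $\mathbb{P}^{2}$ whenever at least one further point is blown up. The computations you record ($-K_{X''}\cdot C=(-K_{X'}+E)\cdot\widetilde{C}\geq 0$, $K_{X''}^{2}=d+1$, $-K_{\mathbb{F}_{n}}\cdot\sigma_{\infty}=2-n$) are all correct, and your description of the converse correctly locates the real content in showing that the almost-general-position conditions are equivalent to every irreducible curve of negative self-intersection being a $(-1)$- or $(-2)$-curve. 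Three points you should make explicit in a full write-up: (i) before invoking the list $\{\mathbb{P}^{2},\mathbb{F}_{n}\}$ you must know your minimal model is rational or at least ruled over $\mathbb{P}^{1}$; this follows because $K^{2}=d>0$ together with $-K$ nef shows $K$ is not nef, so the minimal surface is $\mathbb{P}^{2}$ or a $\mathbb{P}^{1}$-bundle over a curve $B$, and $K^{2}=8(1-g(B))>0$ forces $g(B)=0$; (ii) the ``$9-d$ points'' may be infinitely near, so ``blow up of $\mathbb{P}^{2}$ in $9-d$ points'' must be read as a composite of $9-d$ monoidal transformations; (iii) in the $\mathbb{F}_{2}$ case you should note that the blown-up point necessarily avoids the $(-2)$-section, since otherwise its strict transform would be a $(-3)$-curve on which $-K$ is negative by adjunction, and this avoidance is precisely what makes the fiber through the point contractible and hence the passage to $\mathbb{F}_{1}$ available.
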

\begin{definition} Let $S$ be a normal projective variety. We define the \emph{anticanonical ring} of $S$ to be the graded ring \[
R(S,-K_{S}):=\bigoplus_{m\geq 0} \HH^{0}(S,-K_{S}).
\] If $R(S,-K_{S})$ is finitely generated then we define the \emph{anticanonical model} of $S$ to be the scheme $\Proj R(S,-K_{S})$.
\end{definition}
\begin{remark}
When $X'$ is a weak del Pezzo the anticanonical model of $X'$ is a singular del Pezzo surface $X$ \cite[Prop 8.3.3]{D12}. Moreover, the minimal desingularisation of a singular del Pezzo surface $X$ is a weak del Pezzo surface $X'$ \cite[Thm 8.1.15]{D12}.
\end{remark}
\begin{proposition}\label{prop: point lying away from -2 curve}
Let $X$ be a singular del Pezzo surface over a field $k$ and $\pi:X'\rightarrow X$ the minimal desingularisation of $X$. Suppose there exists $p\in X'(k)$ not lying on a $(-2)$-curve, then $X$ has a smooth rational point.
\end{proposition}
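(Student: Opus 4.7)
The plan is to identify the exceptional locus of $\pi$ with the union of all $(-2)$-curves on $X'$, so that any $k$-rational point missing every $(-2)$-curve automatically lies in the isomorphism locus of $\pi$ and its image is a smooth point of $X$.

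First I would invoke the remark following the definition of the anticanonical model to conclude that $X'$ is a weak del Pezzo surface and that $\pi:X'\to X$ coincides with the anticanonical morphism to the anticanonical model $X=\Proj R(X',-K_{X'})$. In particular the minimal resolution of a rational double point is crepant, so $\pi^{*}K_{X}=K_{X'}$, and a geometrically irreducible curve $C$ on $X'$ is contracted by $\pi$ if and only if $-K_{X'}\cdot C=0$.

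Next I would use adjunction. Any geometrically irreducible curve contracted by a birational morphism of smooth projective surfaces has negative self-intersection; together with $K_{X'}\cdot C=0$, the adjunction formula
\[
2g(C)-2=C^{2}+K_{X'}\cdot C
\]
forces $g(C)=0$ and $C^{2}=-2$, so $C$ is a $(-2)$-curve. Conversely, every $(-2)$-curve is orthogonal to the semi-ample divisor $-K_{X'}$ and hence contracted. Thus the geometric exceptional locus $\pi^{-1}(\mathrm{Sing}(X))$ is exactly the union of all $(-2)$-curves on $X'$.

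To conclude, a point $p\in X'(k)$ not lying on any $(-2)$-curve lies in the open subset $U:=X'\setminus\pi^{-1}(\mathrm{Sing}(X))$, on which $\pi$ restricts to an isomorphism onto the smooth locus of $X$. Hence $\pi(p)\in X(k)$ is a smooth rational point of $X$. There is no real obstacle here; the only substantive ingredient is the standard identification of the contracted locus of the anticanonical morphism from a weak del Pezzo with its $(-2)$-curves, which is essentially the content of the references already cited in the preceding remark.
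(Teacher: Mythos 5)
Your proof is correct and follows essentially the same route as the paper: identify $\pi^{-1}(\mathrm{Sing}(X))$ with the union of the $(-2)$-curves and observe that $\pi$ is an isomorphism off this locus, so $\pi(p)$ is a smooth rational point. The only difference is that you justify the identification via crepancy and adjunction, whereas the paper simply asserts it, relying on its earlier statement that all exceptional components of the minimal resolution of a rational double point are $(-2)$-curves.
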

\begin{proof}
Denote by $\text{Sing}(X)$ the set of singular points on $X$, then $\pi^{-1}(\text{Sing}(X))$ is the set of $(-2)$-curves on $X'$. As $\pi:X'\backslash \pi^{-1}(\text{Sing}(X)) \rightarrow X \backslash \text{Sing}(X)$ is an isomorphism and $p$ lies away from $\pi^{-1}(\text{Sing}(X))$, we have that $X$ has a smooth rational point.
\end{proof}
\begin{remark}
A particular case of Proposition \ref{prop: point lying away from -2 curve}, is when we have a $(-1)$-curve on $\bar{X}'$, which is fixed by the action of $\Gal(\bar{k}/k)$ with a rational point lying away from any $(-2)$-curve. Using Lemma \ref{lemma: dan's lemma}, we can deduce there is a rational point on $X'$ lying away from any $(-2)$-curve.
\end{remark}
\subsection{Dynkin diagrams}
The singularities that we are interested in are rational double point singularities. The resolution graph of these singularities have one vertex for each exceptional curve on the minimal desingularisation above the singularity, and an edge joining two vertices if and only if the corresponding curves intersect. Moreover, each singular point gives rise to a connected component of the resolution graph. The type of each singularity is then defined to be the Dynkin diagram given by its respective connected component in the resolution graph.
\begin{proposition}[{\cite[\S IV Thm 1, \S V Prop 1,\S V Thms 1,2]{DPT80}}]\label{prop: resolution of RDP surfaces and Dynkin diagrams} Let $X$ be a normal surface over a field $k$. Then $x\in X$ is a rational double point singularity if and only if the connected components in the dual graph of the minimal resolution of the point $x$ on $X$ are Dynkin diagrams of type $A_{n}, D_{n}, E_{6}, E_{7}$ or $E_{8}$. Moreover, all irreducible components of the exceptional locus are isomorphic to $\mathbb{P}^{1}$ and have self-intersection $-2$.
\end{proposition}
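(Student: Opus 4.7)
The plan is to follow the classical Du Val approach. For the forward direction, I would first show that every component $E_{i}$ of the exceptional locus of the minimal resolution $\pi: Y \to X$ is a smooth rational curve with $E_{i}^{2} = -2$. The key input is Artin's theorem on rational surface singularities: the hypothesis $R^{1}\pi_{*}\mathcal{O}_{Y} = 0$ implies that every effective cycle supported on the exceptional fibre has arithmetic genus $\leq 0$, and in particular $p_{a}(E_{i}) = 0$, so (as $E_{i}$ is a proper curve over a field) $E_{i} \cong \mathbb{P}^{1}$. Combining the Gorenstein (``double point'') hypothesis with the minimality of the resolution yields $K_{Y} \cdot E_{i} = 0$ for each $i$; then adjunction $2 p_{a}(E_{i}) - 2 = K_{Y} \cdot E_{i} + E_{i}^{2}$ gives $E_{i}^{2} = -2$, proving the ``moreover'' clause.

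Next I would invoke Mumford's classical result that the intersection matrix $(E_{i} \cdot E_{j})$ of the exceptional components on a resolution of a normal surface singularity is negative definite. The problem then reduces to classifying connected graphs in which each vertex carries weight $-2$ and each edge corresponds to a simple transverse intersection, whose associated intersection form is negative definite. This is done by the standard trick using affine Dynkin diagrams: each of $\widetilde{A}_{n}, \widetilde{D}_{n}, \widetilde{E}_{6}, \widetilde{E}_{7}, \widetilde{E}_{8}$ supports an explicit null vector (the imaginary root of the corresponding affine root system, whose coefficients are the usual marks), so any graph containing one of these as a subdiagram has a merely semi-definite form. A short case analysis shows that the remaining admissible connected graphs are precisely the finite Dynkin diagrams $A_{n}, D_{n}, E_{6}, E_{7}, E_{8}$. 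For the converse, one realises each ADE configuration explicitly by resolving the standard hypersurface singularity of the corresponding type, thereby confirming that each is indeed a rational double point.

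The main obstacle is the combinatorial classification step, which is mechanical but delicate; in particular, the fact that only simply-laced diagrams occur uses crucially that the ``double point'' hypothesis forces all pairwise intersections to be transverse of multiplicity one, ruling out the non-simply-laced diagrams $B_{n}, C_{n}, F_{4}, G_{2}$ that would otherwise have to be considered. Since the proposition is stated and proved in detail in \cite{DPT80}, I would cite their treatment rather than reproduce the combinatorial verification.
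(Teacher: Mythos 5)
Your outline is the standard Du Val--Artin--Mumford argument (rationality forces $p_{a}(E_{i})=0$, the double-point/Gorenstein hypothesis plus minimality gives $K_{Y}\cdot E_{i}=0$ and hence $E_{i}^{2}=-2$ by adjunction, negative definiteness reduces the problem to the ADE classification, and the converse is checked on the standard hypersurface models), and it is correct. The paper gives no proof of its own but simply cites \cite{DPT80}, which is precisely the treatment you defer to, so your approach coincides with the paper's.
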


\begin{remark}
Let $X$ be a singular surface with only rational double point singularities over a field $k$. Then the action of $\Gal(\bar{k}/k)$ on $\bar{X}$ preserves singularity type of points i.e.\ if two singular points $x,y\in \bar{X}(\bar{k})$ have different singularity types, then there does not exist $\sigma \in \Gal(\bar{k}/k)$ such that $\sigma(x)=y$.
\end{remark}

\subsection{Roots systems} We recall some facts about roots systems from \cite[\S 8.2]{D12}. Let $I^{1,N}=\mathbb{Z}^{N+1}$ equipped with the symmetric bilinear form defined by the
diagonal matrix $\text{diag}(1, -1, . . . , -1)$ with respect to the standard basis \[e_{0} = (1,0,...,0),\ e_{1} = (0,1,0,...,0),\dotsc,e_{N} = (0,...,0,1)\]
of $\mathbb{Z}^{N+1}$ . Any basis defining the same matrix will be called an \emph{orthonormal basis}. Consider the vector \[k_{N}=-3e_{0}+\sum_{i=1}^{N}e_{i}\in I^{1,N}.\] We define the \emph{$E_{N}$-lattice} as the sublattice of $I^{1,N}$, defined by \[E_{N}:=(\mathbb{Z}k_{N})^{\perp}.\]
\begin{definition}
A vector $\alpha\in E_{N}$ is called a \emph{root} if $\alpha^{2}=-2$.
\end{definition}
\begin{definition}
A vector $v\in I^{1,N}$ is called \emph{exceptional} if $k_{N}\cdot v = -1$ and $v^{2}=-1$.
\end{definition}
\begin{proposition}[{\cite[Prop 0.4]{CT88}}]\label{prop: picard group of wdP} Let $X'$ be a weak del Pezzo of degree $d\leq 6$, over a field $k$. There is an isomorphism $\Pic \bar{X}^{'} \rightarrow I^{1,9-d}$, i.e.\ there exists a triple $(\Pic X', K_{X'},\langle -, - \rangle)$ where\begin{enumerate}
\item $\Pic \bar{X'}\cong \mathbb{Z}^{10-d}$,
\item $\Pic \bar{X'}$ has a basis basis $l_{0},\dotsc,l_{9-d}$, such that \[
l_{0}^2=1,\ l_{i}^2=-1\ \text{for}\  i\in [1,9-d],\ \langle l_{i}, l_{j}\rangle =0\ \text{for all}\ i\neq j.\] 
\item There is an isomorphism of root lattices $(\mathbb{Z}K_{X'})^{\perp}\cong E_{N}$.
\end{enumerate}
\end{proposition}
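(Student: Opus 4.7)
The plan is to use the classification theorem stated just above the proposition to reduce to the case where $\bar{X'}$ is a blow-up of $\mathbb{P}^{2}_{\bar{k}}$ at $N := 9-d$ points in almost general position. Since $d \leq 6$, the two special cases in the classification ($\mathbb{P}^{1}_{k}\times\mathbb{P}^{1}_{k}$ and $\mathbb{F}_{2}$, both of degree $8$) are excluded, so only this blow-up case occurs. I would then factor the morphism $\bar{X'} \to \mathbb{P}^{2}_{\bar{k}}$ as a sequence of $N$ single-point blow-ups, where, under the ``almost general position'' hypothesis, some centres may be infinitely near to earlier ones.

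With this setup, I would define $l_{0}$ to be the pullback to $\bar{X'}$ of the hyperplane class on $\mathbb{P}^{2}_{\bar{k}}$ and, for $1 \leq i \leq N$, let $l_{i}$ be the class of the \emph{total transform} on $\bar{X'}$ of the exceptional divisor of the $i$-th blow-up. For item (1), the standard fact that blowing up a smooth point $p \in Y$ yields $\Pic(\mathrm{Bl}_{p}Y) \cong \sigma^{*}\Pic Y \oplus \mathbb{Z}E$ applied inductively (starting from $\Pic \mathbb{P}^{2}_{\bar{k}} = \mathbb{Z}$) gives $\Pic \bar{X'} \cong \mathbb{Z}^{N+1} = \mathbb{Z}^{10-d}$, with the classes $l_{0},\dotsc,l_{N}$ forming a $\mathbb{Z}$-basis. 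For item (2), the intersection numbers $l_{0}^{2}=1$, $l_{i}^{2}=-1$, and $l_{i}\cdot l_{j}=0$ for $i\neq j$ then follow from the projection formula together with the usual computation $E^{2}=-1$ for an exceptional divisor.

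For item (3), I would compute $K_{\bar{X'}}$ inductively via the formula $K_{\mathrm{Bl}_{p}Y} = \sigma^{*}K_{Y} + E$. Starting from $K_{\mathbb{P}^{2}} = -3H$ and iterating $N$ times gives $K_{\bar{X'}} = -3l_{0} + \sum_{i=1}^{N}l_{i}$, which matches the vector $k_{N}$ from the definition of $E_{N}$. The identification $(\mathbb{Z}K_{\bar{X'}})^{\perp} \cong E_{N}$ is then immediate from the definition of the $E_{N}$-lattice as $(\mathbb{Z}k_{N})^{\perp} \subset I^{1,N}$.

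The main obstacle is handling the infinitely near blow-ups correctly. When some centre $p_{i}$ lies on an earlier exceptional divisor $E_{j}$, the proper transform of $E_{j}$ has self-intersection strictly less than $-1$, and the naive intersection matrix in terms of proper transforms is not diagonal. The trick is to work with \emph{total} transforms throughout: a total transform is pulled back under every subsequent blow-up, so it has zero intersection with each newly introduced exceptional divisor, and the block-diagonal structure $\mathrm{diag}(1,-1,\dotsc,-1)$ persists. Verifying this carefully (and checking that the canonical class formula above is compatible with this convention, since $K$ always picks up the new exceptional class with coefficient $+1$) is the one point requiring genuine attention; the rest is a combination of the classification theorem and routine blow-up bookkeeping.
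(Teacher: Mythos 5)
Your argument is correct and is essentially the standard proof of this fact; the paper itself offers no proof, simply citing \cite[Prop 0.4]{CT88}, where the same strategy (factor the map to $\mathbb{P}^{2}$ into point blow-ups, take the hyperplane pullback and the total transforms of the exceptional divisors as an orthogonal basis, and compute $K=-3l_{0}+\sum l_{i}$) is carried out. Your attention to using total rather than proper transforms to keep the intersection matrix diagonal in the presence of infinitely near centres is exactly the right point to be careful about.
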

\begin{remark}
Under the isomorphism $\Pic X'\rightarrow \mathbb{Z}^{10-d}$ given in Proposition \ref{prop: picard group of wdP}, the $(-2)$-curves on a weak del Pezzo will map to roots and $(-1)$-curves to exceptional vectors.
\end{remark}
\subsection{Graph of negative curves}\label{sec: graph of negative curves}
We discuss how one can determine the graph of negative curves on a weak del Pezzo surface. Throughout Subsection \ref{sec: graph of negative curves}, denote by $X'$ a weak del Pezzo over an algebraically closed field $k$.
\begin{proposition}[{\cite[Thm III.2 and Corollary]{DPT80}}]Denote by $\mathcal{R}$ the subset of $\Pic X'$ containing the $(-2)$-curves of $X'$. An exceptional class $\lambda \in \Pic X'$ is an irreducible effective divisor if and only if $\langle \lambda,\alpha \rangle \geq 0$ for all $\alpha \in \mathcal{R}$.
\end{proposition}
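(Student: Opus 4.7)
The forward direction is immediate. If $\lambda$ is represented by an irreducible effective divisor $C$, adjunction gives $2 p_a(C) - 2 = \lambda^2 + \lambda \cdot K_{X'} = -2$, so $p_a(C) = 0$ and $C$ is a smooth rational $(-1)$-curve. Any $(-2)$-curve $\alpha$ is then a distinct irreducible curve, so $\lambda \cdot \alpha = C \cdot \alpha \geq 0$.

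For the converse, assume $\lambda$ is exceptional with $\lambda \cdot \alpha \geq 0$ for every $\alpha \in \mathcal{R}$. The first step is to produce an effective representative via Riemann--Roch. One computes $\chi(\lambda) = 1 + \tfrac{1}{2}(\lambda^2 - \lambda \cdot K_{X'}) = 1$, and by Serre duality $h^2(\lambda) = h^0(K_{X'} - \lambda)$. Since $(K_{X'} - \lambda) \cdot (-K_{X'}) = -K_{X'}^2 + \lambda \cdot K_{X'} = -(d+1) < 0$ and $-K_{X'}$ is nef, $K_{X'} - \lambda$ cannot be effective, so $h^2(\lambda) = 0$ and $h^0(\lambda) \geq 1$.

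Pick $D \in |\lambda|$ with irreducible decomposition $D = \sum n_i C_i$. Because $-K_{X'}$ is nef each $C_i \cdot K_{X'} \leq 0$, while $D \cdot K_{X'} = -1$. Hence exactly one component $E$ appears with multiplicity one and $E \cdot K_{X'} = -1$, and every other component satisfies $C_i \cdot K_{X'} = 0$. For such $C_i$, the Hodge index theorem together with $-K_{X'}$ big and nef forces $C_i^2 < 0$, and adjunction then yields $C_i^2 = -2$ with $p_a(C_i) = 0$, so $C_i$ is a $(-2)$-curve; a similar argument identifies $E$ as a $(-1)$-curve. Thus $D = E + N$ where $N$ is a non-negative integer combination of $(-2)$-curves.

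The final step is to show $N = 0$. Expanding $\lambda^2 = -1$ gives $2 E \cdot N + N^2 = 0$, so $E \cdot N = -N^2/2$. On the other hand, the hypothesis that $\lambda$ intersects every $(-2)$-curve non-negatively, combined with the fact that $N$ is a non-negative combination of such curves, yields $\lambda \cdot N \geq 0$, which rearranges to $E \cdot N \geq -N^2$. Combining these, $-N^2/2 \geq -N^2$, so $N^2 \geq 0$. But the sublattice of $\Pic X'$ spanned by the $(-2)$-curves is negative definite (they form disjoint ADE configurations by Proposition \ref{prop: resolution of RDP surfaces and Dynkin diagrams}, and each such configuration has negative definite intersection form), forcing $N = 0$. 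Hence $D = E$ is a $(-1)$-curve. The main obstacle is precisely this intersection-theoretic squeeze: one must discover that the self-intersection identity and the $(-2)$-curve hypothesis pinch $E \cdot N$ from opposite sides, leaving the negative definiteness of the root sublattice to deliver the final contradiction.
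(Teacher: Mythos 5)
The paper does not prove this proposition at all --- it is quoted from \cite[Thm III.2 and Corollary]{DPT80} --- so there is no in-text argument to compare against. Your proof is the standard one and is essentially correct: the forward direction by adjunction, effectivity of $\lambda$ by Riemann--Roch plus the vanishing of $h^2$ from nefness of $-K_{X'}$, the decomposition $D=E+\sum n_iC_i$ forced by $D\cdot K_{X'}=-1$ with each $C_i\cdot K_{X'}\le 0$, the identification of the $K$-trivial components as $(-2)$-curves via Hodge index and adjunction, and the final squeeze using negative definiteness of the root sublattice. All of those steps check out.

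The one soft spot is the sentence ``a similar argument identifies $E$ as a $(-1)$-curve.'' For the $K$-trivial components the Hodge index theorem gives $C_i^2<0$ outright, but for $E$ with $E\cdot K_{X'}=-1$ it only gives $K_{X'}^2E^2\le (E\cdot K_{X'})^2$, i.e.\ $E^2\le 1/d$. When $d\ge 2$ this forces $E^2\le 0$ and adjunction then gives $E^2=-1$; but when $d=1$ it allows $E^2=1$ with $p_a(E)=1$ (an irreducible anticanonical curve, which genuinely exists on weak del Pezzo surfaces of degree $1$), and your identity $2E\cdot N+N^2=0$, which presupposes $E^2=-1$, would then fail. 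The gap is easy to close without Hodge index: since $\lambda\cdot N\ge 0$ by hypothesis and $E\cdot N\ge 0$ (as $E$ is not a component of $N$), one has $-1=\lambda^2=\lambda\cdot E+\lambda\cdot N\ge \lambda\cdot E=E^2+E\cdot N\ge E^2$, while adjunction gives $E^2=2p_a(E)-1\ge -1$; hence $E^2=-1$, $E\cdot N=0$ and $\lambda\cdot N=0$, whence $N^2=N\cdot\lambda-N\cdot E=0$ and $N=0$ by negative definiteness. With that half-line inserted your argument is complete and in fact slightly streamlines your final step.
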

Consider the triple $(\Pic X',' K_{X'},\langle -, - \rangle)$ from Proposition \ref{prop: picard group of wdP}, then $\Pic X'\cong \mathbb{Z}^{N+1}$ and has a basis $l_{0},\dotsc,l_{N}$, such that \[
l_{0}^2=1,\ l_{i}^2=-1\ \text{for}\  i\in [1,N],\ \langle l_{i}, l_{j}\rangle =0\ \text{for all}\ i\neq j.\] 
Denote by $Q$ the space $Q:=\{\alpha \in\Pic X: \langle \alpha, K_{X'}\rangle=0\}$ i.e.\ the orthogonal space to the canonical divisor of $X'$ and as a root lattice we have an isomorphism $Q\cong E_{N}$. The set of roots of $\Pic X'$ is the set $R = \{\alpha \in Q: \alpha^2=-2\}$.  Dolgachev has classified all possible exceptional and root vectors in \cite[Prop 8.2.19, Prop 8.3.7]{D12}. 

\section{Point counts over finite fields}\label{subsec: point count over finite fields}
In this section we describe how weak del Pezzo surfaces can be used to study the number of rational points on singular del Pezzo surfaces.

\begin{theorem}[Weil, {\cite[Chap IV, Thm 27.1]{M74}}]\label{theorem: Weil result}
Let $S$ be a smooth projective surface over a finite field $\mathbb{F}_{q}$. If $\bar{S}$ is rational then 
\[
\#S(\mathbb{F}_{q})=q^2+\tr(\phi^{*})q+1
\] where $\phi$ is the Frobenius endomorphism and $\tr(\phi^{*})$ is the trace of the corresponding representation to $\phi$.
\end{theorem}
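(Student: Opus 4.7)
My plan is to prove this via the Grothendieck-Lefschetz trace formula applied to the Frobenius endomorphism, specialised to a geometrically rational surface.

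First I would write
\[
\#S(\mathbb{F}_{q}) \;=\; \sum_{i=0}^{4} (-1)^{i}\, \tr\bigl(\phi^{*} \mid \HH^{i}_{\text{\'et}}(\bar{S}, \QQ_{\ell})\bigr)
\]
for some prime $\ell \neq \charr \mathbb{F}_{q}$. The next task is to compute each cohomology group and the Frobenius action on it using the hypothesis that $\bar{S}$ is rational.

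For the base and top degrees the computation is formal: $\HH^{0}_{\text{\'et}}(\bar{S}, \QQ_{\ell}) = \QQ_{\ell}$ with $\phi^{*}$ acting trivially, and $\HH^{4}_{\text{\'et}}(\bar{S}, \QQ_{\ell}) = \QQ_{\ell}(-2)$ with $\phi^{*}$ acting by $q^{2}$, contributing $1$ and $q^{2}$ respectively. For the odd degrees I would use birational invariance of $\HH^{1}$ and, via Poincar\'e duality, of $\HH^{3}$ for smooth projective surfaces, combined with the fact that $\bar{S}$ is obtained from $\mathbb{P}^{2}_{\bar{k}}$ or a Hirzebruch surface by a sequence of blow-ups at closed points; since $\HH^{1}(\mathbb{P}^{2}) = \HH^{3}(\mathbb{P}^{2}) = 0$ and blow-ups only add classes in $\HH^{2}$, both odd cohomology groups vanish, so contribute nothing to the trace.

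The main computation, and the place where I expect to have to be careful, is $\HH^{2}$. The plan is to invoke the cycle class map
\[
c_{1} : \Pic(\bar{S}) \otimes_{\ZZ} \QQ_{\ell} \longrightarrow \HH^{2}_{\text{\'et}}(\bar{S}, \QQ_{\ell}(1)),
\]
which for a geometrically rational surface is a $\Gal(\bar{k}/k)$-equivariant isomorphism: this follows again by reduction to the projective plane via blow-ups, since each blow-up adds exactly one class to both sides and the map is compatible with the blow-up formulas. Because $c_{1}$ lands in the Tate-twisted cohomology and $\phi^{*}$ acts on $\QQ_{\ell}(1)$ by $q^{-1}$, untwisting shows that $\phi^{*}$ on $\HH^{2}_{\text{\'et}}(\bar{S}, \QQ_{\ell})$ equals $q$ times the representation of $\phi^{*}$ on $\Pic \bar{S}$. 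Therefore
\[
\tr\bigl(\phi^{*} \mid \HH^{2}_{\text{\'et}}(\bar{S}, \QQ_{\ell})\bigr) \;=\; q \cdot \tr(\phi^{*} \mid \Pic \bar{S}),
\]
and substituting everything into the Lefschetz formula gives exactly $\#S(\mathbb{F}_{q}) = q^{2} + q\, \tr(\phi^{*}) + 1$.

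The main obstacle is really the combination of two technical facts in the $\HH^{2}$ step: establishing the $\Gal$-equivariant isomorphism with $\Pic$ (equivalently, showing the transcendental lattice vanishes for rational surfaces) and tracking the Tate twist carefully so that the factor of $q$ appears with the correct sign and multiplicity. An alternative, more hands-on route would be to bypass \'etale cohomology entirely: verify the formula directly on $\mathbb{P}^{2}_{\mathbb{F}_{q}}$ and on Hirzebruch surfaces, and then check that both sides transform in the same way under blowing up a closed point of degree $d$ (the left side changes by $q \cdot \#\{\text{Frobenius orbits of length dividing }d\}$-type terms and the right side by the corresponding increase in $\tr(\phi^{*} \mid \Pic)$). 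This avoids $\ell$-adic machinery at the cost of a more delicate Galois-orbit bookkeeping on the exceptional divisors.
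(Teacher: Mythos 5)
The paper does not prove this statement at all --- it is quoted directly from Manin's \emph{Cubic Forms} with a citation, so there is no internal proof to compare against. Your $\ell$-adic argument is correct and is the standard modern proof: the Grothendieck--Lefschetz trace formula, vanishing of odd cohomology ($b_{1}=0$ is a birational invariant and vanishes for $\mathbb{P}^{2}$), and the fact that for a geometrically rational surface the cycle class map $\Pic(\bar{S})\otimes\QQ_{\ell}\to \HH^{2}_{\text{\'et}}(\bar{S},\QQ_{\ell}(1))$ is an isomorphism, so $\HH^{2}$ is pure Tate and Frobenius acts as $q$ times its action on $\Pic\bar{S}$. Two small points of care. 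First, the Galois-equivariance of $c_{1}$ should be taken as coming from its construction (the Kummer sequence), not from the blow-up structure: the factorisation of $\bar{S}$ through blow-ups of $\mathbb{P}^{2}_{\bar{k}}$ lives over $\bar{k}$ and need not descend, so it can only be used for the purely geometric claim that $c_{1}$ is surjective. Second, there is the usual arithmetic-versus-geometric Frobenius ambiguity in identifying ``the corresponding representation'' on $\Pic\bar{S}$; this is harmless here because the Galois action preserves the intersection form, so it is orthogonal and $\tr(g)=\tr(g^{-1})$. Your proposed cohomology-free alternative (verify on minimal models and track both sides under blowing up a closed point of degree $d$, where $\#S$ gains $q\cdot\#\{\text{orbits fixed by Frobenius}\}$ and $\tr(\phi^{*}\mid\Pic)$ gains the same count) is closer in spirit to the classical treatment in the cited source and works equally well; it trades the $\ell$-adic machinery for the observation that every smooth projective geometrically rational surface over a perfect field is dominated by equivariant blow-ups of a form of a minimal surface.
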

\begin{remark}\label{rem: smooth del Pezzo has smooth point}
 Note smooth del Pezzo surfaces are encapsulated in Definition \ref{defn: singular del Pezzo}, however Theorem \ref{theorem: Weil result} shows they always have a smooth $\mathbb{F}_{q}$-point. Hence, we can ignore these surfaces from now on.
\end{remark}
\begin{theorem}[{\cite[Prop 24]{K13}}]\label{theorem: Nathan Kaplan result}
Let $X$ be a singular del Pezzo of degree $9-N\leq 6$ with minimal desingularisation $\pi:X'\rightarrow X$. Let $\mathcal{R} \subset \Pic \bar{X}'$ be the root sublattice generated by $(-2)$-curves on $\bar{X}'$. Then $\#X(\mathbb{F}_{q})=q^{2}+q+1+qt$
where
\[
t = \tr(\phi|_{E_{N}}) - \tr(\phi|_{\mathcal{R}}).
\]
\end{theorem}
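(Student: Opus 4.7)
The plan is to apply Theorem \ref{theorem: Weil result} to the minimal desingularisation $X'$, express $\#X(\mathbb{F}_q)$ in terms of $\#X'(\mathbb{F}_q)$ via a stratification along the exceptional locus of $\pi$, and compute the resulting correction term using the fact that the Dynkin diagrams appearing are all trees.

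First I would invoke Theorem \ref{theorem: Weil result} for $X'$, which is smooth projective and rational over $\bar k$ (being a weak del Pezzo); this yields
\[
\#X'(\mathbb{F}_q) = q^{2} + q\,\tr(\phi|_{\Pic \bar{X}'}) + 1.
\]
By Proposition \ref{prop: picard group of wdP}, $\Pic \bar{X}' \otimes \mathbb{Q}$ splits orthogonally as $\mathbb{Q} K_{X'} \oplus (E_{N} \otimes \mathbb{Q})$, and since $K_{X'}$ is defined over $k$ the Frobenius respects this decomposition and acts as the identity on the first summand, giving $\tr(\phi|_{\Pic \bar{X}'}) = 1 + \tr(\phi|_{E_{N}})$. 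Next, setting $E := \pi^{-1}(\mathrm{Sing}(X))$ and using that $\pi$ restricts to an isomorphism $X' \setminus E \xrightarrow{\sim} X \setminus \mathrm{Sing}(X)$ over $k$, additivity of point counts over $\mathbb{F}_q$ gives
\[
\#X(\mathbb{F}_q) = \#X'(\mathbb{F}_q) - \#E(\mathbb{F}_q) + \#\mathrm{Sing}(X)(\mathbb{F}_q).
\]
Combining the two displays, the theorem reduces to proving the identity
\[
\#E(\mathbb{F}_q) - \#\mathrm{Sing}(X)(\mathbb{F}_q) = q\,\tr(\phi|_{\mathcal{R}}). \qquad (\star)
\]

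I would obtain $(\star)$ by applying the Grothendieck--Lefschetz trace formula to $\bar E$. By Proposition \ref{prop: resolution of RDP surfaces and Dynkin diagrams}, each connected component of $\bar E$ is a configuration of $\mathbb{P}^{1}$'s whose dual graph is a Dynkin diagram of type $A$, $D$, or $E$, and is therefore a tree; these components correspond bijectively to the geometric singular points of $X$. Consequently $\bar E$ is a disjoint union of trees of $\mathbb{P}^{1}$'s meeting transversely at nodes, and its $\ell$-adic cohomology is $H^{0}(\bar E,\mathbb{Q}_{\ell}) = \mathbb{Q}_{\ell}^{c}$ where $c$ is the number of connected components, $H^{1}(\bar E,\mathbb{Q}_{\ell}) = 0$ (via an iterated Mayer--Vietoris along the tree, exploiting the absence of cycles in the nerve), and $H^{2}(\bar E,\mathbb{Q}_{\ell}) = \mathbb{Q}_{\ell}^{n}$ spanned by the fundamental classes of the $n$ $(-2)$-curves. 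Frobenius permutes the connected components of $\bar E$ in accordance with its action on $\overline{\mathrm{Sing}(X)}$, giving $\tr(\phi|_{H^{0}}) = \#\mathrm{Sing}(X)(\mathbb{F}_{q})$; on $H^{2}$ it acts as $q$ times the permutation of $(-2)$-curves, which coincides with its permutation action on the basis of simple roots of $\mathcal{R}$, giving $\tr(\phi|_{H^{2}}) = q\,\tr(\phi|_{\mathcal{R}})$. The Lefschetz trace formula then produces $(\star)$.

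The hardest part will be the cohomological calculation for $\bar E$, in particular the vanishing $H^{1}(\bar E,\mathbb{Q}_{\ell})=0$ (which relies crucially on Dynkin diagrams being trees) and the identification of Frobenius on $H^{2}(\bar E,\mathbb{Q}_{\ell})$ with its action on $\mathcal{R}$ via the canonical basis of simple roots / $(-2)$-curves. Once $(\star)$ is established, substituting back into the stratification formula yields $\#X(\mathbb{F}_{q}) = q^{2} + q + 1 + qt$ with $t = \tr(\phi|_{E_{N}}) - \tr(\phi|_{\mathcal{R}})$, as required.
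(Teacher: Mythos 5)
Your argument is correct. Note that the paper does not prove this statement at all --- it is quoted from \cite[Prop 24]{K13} --- and your proof is essentially the standard one given there: apply Theorem \ref{theorem: Weil result} to the minimal desingularisation, split off $\tr(\phi|_{\Pic \bar{X}'}) = 1 + \tr(\phi|_{E_N})$ using the Frobenius-invariant non-isotropic class $K_{X'}$, and observe that each exceptional configuration is a tree of $\mathbb{P}^1$'s so that $\#E(\mathbb{F}_q) - \#\mathrm{Sing}(X)(\mathbb{F}_q) = q\,\tr(\phi|_{\mathcal{R}})$, the $(-2)$-curves being a permutation basis of $\mathcal{R}$.
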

\begin{remark}\label{rem: singular del Pezzo with no singular point over ground field has a smooth point}
If $X$ is a singular del Pezzo with no singular rational points (i.e.\ all singular points are contained in a  Galois orbit of degree greater than 1) then by Theorem \ref{theorem: Nathan Kaplan result} we see that $X(\mathbb{F}_{q})\neq \emptyset$, hence $X$ has a smooth rational point.
\end{remark}

\begin{corollary}\label{cor: number of singular points not congruent to one mod q then have smooth point}
Let $X$ be a singular del Pezzo over a finite field $\mathbb{F}_{q}$ with $\delta>0$ singular rational points. If $\delta \not\equiv 1 $ mod $q$, then $X$ has a smooth $\mathbb{F}_{q}$-point. In particular, if $\delta = 2$ then $X$ has a smooth rational point.
\end{corollary}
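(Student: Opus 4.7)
The plan is to contrapositively apply Theorem \ref{theorem: Nathan Kaplan result}. Suppose, for contradiction, that $X$ has no smooth $\mathbb{F}_q$-point. Then every $\mathbb{F}_q$-point of $X$ must be singular, so
\[
\#X(\mathbb{F}_q) = \delta.
\]
On the other hand, by Theorem \ref{theorem: Nathan Kaplan result} applied to the minimal desingularisation $\pi\colon X'\to X$, we have
\[
\#X(\mathbb{F}_q) = q^2 + q + 1 + qt
\]
for some integer $t = \tr(\phi|_{E_N}) - \tr(\phi|_{\mathcal{R}})$. Combining these two expressions gives $\delta = q^2 + q + 1 + qt$, and reducing modulo $q$ yields $\delta \equiv 1 \pmod q$. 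The contrapositive is exactly the statement: if $\delta \not\equiv 1 \pmod q$, then $X$ must admit a smooth $\mathbb{F}_q$-point.

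For the ``in particular'' clause, observe that $\delta = 2$ satisfies $2 \not\equiv 1 \pmod q$ for every prime power $q \geq 2$, since this would force $q \mid 1$. Hence the hypothesis is automatically met and $X$ has a smooth rational point.

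The only real subtlety is ensuring that Theorem \ref{theorem: Nathan Kaplan result} is available, i.e.\ that the degree of $X$ falls in the range where the point-count formula applies; since singular del Pezzo surfaces of higher degree either do not exist or can be treated as smooth (cf.\ Remark \ref{rem: smooth del Pezzo has smooth point}), there is no obstruction. The argument is otherwise a clean reduction modulo $q$, with no computational content.
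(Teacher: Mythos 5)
Your proof is correct and rests on exactly the same key fact as the paper's, namely that Theorem \ref{theorem: Nathan Kaplan result} forces $\#X(\mathbb{F}_{q})\equiv 1 \pmod{q}$. You phrase it as a direct contradiction (no smooth point would give $\#X(\mathbb{F}_{q})=\delta$ and hence $\delta\equiv 1 \pmod{q}$), while the paper runs a slightly longer inequality argument to show $\#X(\mathbb{F}_{q})>\delta$; the content is the same, and your version is if anything cleaner.
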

\begin{proof}
By Theorem \ref{theorem: Nathan Kaplan result} we have $\#X(\mathbb{F}_{q})\equiv 1$ mod $q$. As $X$ has $\delta$ singularities defined over $\mathbb{F}_{q}$, we can deduce that $\#X(\mathbb{F}_{q})\geq \delta$. As $\delta\not\equiv 1$ mod $q$ the number of rational points on $X$ has a lower bound of $\#X(\mathbb{F}_{q})\geq 1+q$. If $q+1>\delta$ then we clearly have a smooth $\mathbb{F}_{q}$-point on $X$. From now on we consider the case $q+1<\delta $. There exists a natural number $N\geq 1$ such that $1+Nq<\delta<1+(N+1)q$. Then $\#X(\mathbb{F}_{q})\geq \delta >1+Nq$ hence, $\#X(\mathbb{F}_{q})\geq 1+(N+1)q$ as $\#X(\mathbb{F}_{q})\equiv 1$ mod $q$.
\end{proof}

\begin{algorithm}\label{algorithm: determine traces on singular del pezzo}
We now detail an algorithm to determine the number of rational points on a singular del Pezzo surface $X$ over $\mathbb{F}_{q}$ of degree $d$, where $d\leq 6$. Denote by $X'$ the minimal desingularisation of $X$ and $\Gamma$ the graph of negative curves on $\bar{X}'$. Note that the action of $\Gal(\bar{\mathbb{F}}_{q}/\mathbb{F}_{q})$ on $\bar{X}'$ will factor through a finite group $\Gal(\mathbb{F}_{q^{n}}/\mathbb{F}_{q})=\langle \Frob_{q}\rangle$, for some $n\in \mathbb{N}$ and define a graph automorphism of $\Gamma$ i.e.\ the action of $\Frob_{q}$ on $\bar{X}'$ corresponds to an element $g\in \Aut(\Gamma)$. Throughout the algorithm $\phi_{g}$ will denote the homomorphism 
\[
\Gal(\mathbb{F}_{q^{n}}/\mathbb{F}_{q})\rightarrow \Aut(\Gamma),\  \Frob_{q}\mapsto g.
\] Let each vertex $v_{i}$ in $\Gamma$ correspond to a basis element $e_{i}$ of the free $\mathbb{Z}$-module $M=\oplus_{i=1}^{n} \mathbb{Z}e_{i}$ with intersection pairing $\langle - , - \rangle$ defined by: \begin{itemize} 
\item $\langle e_{i},e_{i}\rangle=-n$ if $v_{i}$ corresponds to a $(-n)$-curve,
\item $\langle e_{i},e_{j}\rangle=$ number of edges between $v_{i}$ and $v_{j}$ if $i\neq j$. 
\end{itemize}
The intersection form on $M$ has rank $10-d$, denote by $F$ the kernel of this form. One can define an action of $\Gal(\mathbb{F}_{q^{n}}/\mathbb{F}_{q})$ on $M$ via 
\[
\Frob_{q}\cdot m := \phi_{g}(\Frob_{q})m, 
\] for $m\in M$. Note that we can write a basis $l_{0},\dotsc,l_{N}$, for $\Pic \bar{X}'$, such that, \[
l_{0}^2=1,\ l_{i}^2=-1\ \text{for}\  i\in [1,N],\ \langle l_{i}, l_{j}\rangle =0\ \text{for all}\ i\neq j\] by Proposition \ref{prop: picard group of wdP}. Then by \cite[Thm 3.10]{DPT80} the effective cone of $\Pic \bar{X}' \otimes_{\mathbb{Z}}\mathbb{R}$ is generated by negative curves, hence each divisor $l_{i}$ can be written as a sum $\sum a_{j}R_{j}$, where $R_{j}$ is a $(-1)$ or $(-2)$-curve. We can define a morphism
\[
\psi:\Pic \bar{X}' \otimes_{\mathbb{Z}} \mathbb{R}\rightarrow M/F \otimes_{\mathbb{Z}} \mathbb{R}, l_{i} \mapsto \sum a_{j} R_{j}, 
\] where the image of $\sum a_{j}R_{j}$ is a choice of representation of $l_{i}$ as the sum of negative curves.
\begin{proposition}\label{prop: pic is iso to M/K} The map $\psi:\Pic \bar{X}'\otimes_{\mathbb{Z}}\mathbb{R}\rightarrow M/F\otimes_{\mathbb{Z}}\mathbb{R}$, is an isomorphism of $\Gal(\mathbb{F}_{q^{n}}/\mathbb{F}_{q})$-modules.
\end{proposition}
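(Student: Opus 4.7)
The plan is to construct the natural map of lattices
\[
\phi \colon M \to \Pic \bar{X}'
\]
sending each generator $e_i$ to the divisor class of the negative curve corresponding to the vertex $v_i$, and to identify $\psi$ as the inverse of the induced map on $M/F \otimes \mathbb{R}$. By construction $\phi$ preserves intersection pairings, since the pairing on $M$ was defined precisely by the intersection numbers of the corresponding negative curves on $\bar{X}'$. Moreover $\Frob_q$ acts on $\bar{X}'$ by permuting the negative curves according to the graph automorphism $g$, so the action on $\Pic \bar{X}'$ restricted to classes of negative curves matches the action on $M$ defined via $\phi_g$, making $\phi$ a $\Gal(\mathbb{F}_{q^n}/\mathbb{F}_q)$-equivariant homomorphism.

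For surjectivity of $\phi \otimes \mathbb{R}$, each element of the basis $l_0, \dots, l_N$ of $\Pic \bar{X}'$ from Proposition \ref{prop: picard group of wdP} is effective: $l_0$ is the pullback of a line on $\mathbb{P}^2$ and each $l_i$ for $i \geq 1$ is the total transform of an exceptional divisor of the blow-up presentation of $X'$. By \cite[Thm 3.10]{DPT80} the effective cone of $\Pic \bar{X}' \otimes \mathbb{R}$ is generated by $(-1)$- and $(-2)$-curves, so each $l_i$ lies in the image, which therefore covers a basis of $\Pic \bar{X}' \otimes \mathbb{R}$. For the kernel, $\Pic \bar{X}'$ carries a non-degenerate intersection form since $\bar{X}'$ is a smooth projective rational surface; hence $m \in \ker \phi$ forces $\langle m, n \rangle_M = \langle \phi(m), \phi(n) \rangle_{\Pic} = 0$ for all $n \in M$, yielding $\ker \phi \subseteq F$. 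The induced surjection $(M/\ker \phi) \otimes \mathbb{R} \twoheadrightarrow \Pic \bar{X}' \otimes \mathbb{R}$ is an isomorphism preserving forms; since the target form is non-degenerate of rank $10-d$, the same holds on $M/\ker \phi$, forcing $F \otimes \mathbb{R} = \ker \phi \otimes \mathbb{R}$ by rank comparison.

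Hence $\phi$ descends to a Galois-equivariant isomorphism $\bar\phi \colon M/F \otimes \mathbb{R} \xrightarrow{\sim} \Pic \bar{X}' \otimes \mathbb{R}$. The map $\psi$ as defined picks a representation $l_i = \sum a_j R_j$ and returns $\sum a_j e_j \bmod F$; any two lifts of $l_i$ to $M \otimes \mathbb{R}$ differ by an element of $\ker \phi \otimes \mathbb{R} = F \otimes \mathbb{R}$, so $\psi$ is well-defined and equals $\bar\phi^{-1}$. As the inverse of a Galois-equivariant isomorphism, $\psi$ is itself Galois-equivariant and an isomorphism. The main obstacle is pinning down $\ker \phi = F$ rather than merely $\ker \phi \subseteq F$; I expect to handle this, as above, via the rank comparison together with the non-degeneracy of the intersection pairing on $\Pic \bar{X}'$.
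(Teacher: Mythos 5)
Your proof is correct, and it reaches the result by a route that is structurally different from --- and more careful than --- the paper's. The paper works directly with $\psi$: it asserts that $\psi$ is ``clearly'' a surjective $\Gal(\mathbb{F}_{q^{n}}/\mathbb{F}_{q})$-module homomorphism and then proves injectivity by observing that an element of the kernel pairs to zero with every negative curve and is therefore numerically, hence linearly, trivial. You instead construct the natural evaluation map $\phi\colon M\to\Pic\bar{X}'$, $e_{i}\mapsto[R_{i}]$, note that it preserves pairings and is Galois-equivariant by construction, and identify $\psi$ as the inverse of the induced isomorphism $(M/F)\otimes_{\mathbb{Z}}\mathbb{R}\to\Pic\bar{X}'\otimes_{\mathbb{Z}}\mathbb{R}$. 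Both arguments ultimately rest on the same two facts --- the negative curves span $\Pic\bar{X}'\otimes_{\mathbb{Z}}\mathbb{R}$ because they generate the effective cone, and the intersection form on $\Pic\bar{X}'\otimes_{\mathbb{Z}}\mathbb{R}$ is non-degenerate --- but your packaging buys something real: it establishes that $\psi$ does not depend on the chosen representations $l_{i}=\sum a_{j}R_{j}$ and that it is Galois-equivariant, two points which the paper's definition of $\psi$ (which involves arbitrary choices) leaves hidden behind the word ``clearly.'' Your closing worry about $\ker\phi=F$ versus $\ker\phi\subseteq F$ is already resolved by the argument you sketch: after tensoring with $\mathbb{R}$, the image of $F$ in $M/\ker\phi$ lies in the radical of a form that your isomorphism shows is non-degenerate, or equivalently one compares the rank count $\rank(M/F)=10-d=\rank\Pic\bar{X}'$, so the two subspaces coincide.
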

\begin{proof}
The map $\psi$ is clearly a $\Gal(\mathbb{F}_{q^{n}}/\mathbb{F}_{q})$-module homomorphism and surjective. Now it is sufficient to show $\psi$ is injective. Suppose $D\in \Pic \bar{X}'$ is in the kernel of $\psi$. Then $\psi(D)\in F$, hence $\langle D,D'\rangle=0$, for all negative curves on $\bar{X}$. As numerical equivalence is the same as linear equivalence in $\Pic \bar{X}'\otimes_{\mathbb{Z}}\mathbb{R}$, we have that $D=0$ in $\Pic \bar{X}'\otimes_{\mathbb{Z}}\mathbb{R}$.
\end{proof}
All possible actions of $\Gal(\bar{\mathbb{F}}_{q}/\mathbb{F}_{q})$ on $\Pic \bar{X}'$ are subsets of the possible actions of $\Aut(\Gamma)$ on $M/F$. One can also find the action of $\Gal(\bar{\mathbb{F}}_{q}/\mathbb{F}_{q})$ on the subset $\mathcal{R}\subset \Pic \bar{X}'$ of $(-2)$-curves as it is a linearly independent subset, hence one can determine the number of rational points on $X$ using Theorem \ref{theorem: Nathan Kaplan result}. Note that in Proposition \ref{prop: pic is iso to M/K} we base change to $\mathbb{R}$ so we are free to change basis. As the choice of basis has no effect on the trace of Frobenius and Picard group is torsion free this has no effect on our calculations in the future.
\end{algorithm}
\section{Toric varieties}\label{sec: toric}
In this section we introduce toric varieties and show that any toric singular del Pezzo surface over a finite field has to have a smooth rational point.
\begin{definition}
An \emph{algebraic torus} $T$ over a field $k$, is an algebraic group over $k$, such that $T$ becomes isomorphic to $\mathbb{G}_{m}^{n}$ over $\bar{k}$ for some $n\in \mathbb{N}$. 
\end{definition}
\begin{definition}A \emph{toric variety} is a normal variety $X$ over a field $k$, with a faithful action of an algebraic torus $T$ over $k$ which has a dense open orbit.
\end{definition}

\begin{proposition}\label{prop: toric over k bar then toric over k}
Let $X$ be a singular del Pezzo surface over a field $k$. Assume that $\bar{X}$ is toric over $\bar{k}$, then there exists a torus $T$ over $k$ with an action $T\times X\rightarrow X$ with a dense open orbit i.e.\ $X$ is toric over $k$. Moreover, if $k$ is a finite field then $X$ has a smooth rational point.
\end{proposition}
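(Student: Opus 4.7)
My plan is to prove the proposition in two stages: first descend the toric structure from $\bar{k}$ to $k$, then apply Lang's theorem in the finite-field case.

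For the descent, I would use the automorphism group scheme $\Aut(X)$, which is representable as a group scheme locally of finite type over $k$ by a theorem of Matsumura and Oort, with identity component $\Aut^{0}(X)$ of finite type. Since $\bar{X}$ is rational (being toric), $\Aut^{0}(X)$ is a linear algebraic group. Over $\bar{k}$, the toric torus $\bar{T}$ sits inside $\Aut^{0}(\bar{X})$, and I would first check that it is a maximal torus there: any subtorus of $\Aut^{0}(\bar{X})$ is commutative and acts faithfully on the 2-dimensional variety $\bar{X}$, so its generic stabilizer is trivial and its dimension is at most $2$. By general theory of linear algebraic groups over the perfect field $k$, $\Aut^{0}(X)$ admits a maximal torus $T$ defined over $k$, which is necessarily of dimension $2$. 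Its action on $X$ is faithful, and over $\bar{k}$ the generic orbit has dimension $2$ and is hence dense; since being the open dense orbit is intrinsic to the action, it descends to a dense open subscheme $U \subset X$, giving the desired toric structure.

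Once $X$ is toric over $k = \mathbb{F}_{q}$, the open orbit $U \subset X$ is a torsor under the connected algebraic group $T$. By Lang's theorem, $H^{1}(\mathbb{F}_{q}, T) = 0$, so $U$ is trivial as a torsor and has an $\mathbb{F}_{q}$-point. Any such point is smooth because $U \cong T$ is smooth.

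The main obstacle, I expect, is the appeal to the existence of the automorphism group scheme and of a maximal torus over the perfect field $k$; both facts are standard but import machinery not developed earlier in the paper. A more elementary alternative would be to attempt a direct Galois descent of the toric boundary $\bar{X} \setminus \bar{T}$ by characterizing it intrinsically on the minimal desingularization $\bar{X}'$ as a specific cycle of rational curves, and then using Lemma~\ref{lemma: dan's lemma} to upgrade Galois-stability of divisor classes of the negative boundary curves to Galois-stability of the curves themselves; however, handling boundary components with non-negative self-intersection seems to be the main difficulty along this route.
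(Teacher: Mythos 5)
Your proposal is correct and follows essentially the same route as the paper: both pass to a maximal torus of the automorphism group of $X$ defined over $k$ (the paper cites Grothendieck's theorem from SGA3), show it still acts with a dense open orbit, and conclude in the finite-field case with Lang's vanishing of $\HH^{1}(\mathbb{F}_{q},T)$ applied to the open orbit as a $T$-torsor. The only cosmetic difference is that you establish the dense open orbit for the $k$-maximal torus by a dimension count, whereas the paper transfers it from the given toric torus via conjugacy of maximal tori.
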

\begin{proof}
As $\bar{X}$ is toric we have a subtorus $T_{1}\subseteq \Aut(\bar{X})$. Then $T_{1}$ lies in a maximal torus $T'\subseteq \Aut(\bar{X})$, by Grothendieck's theorem \cite[SGA3, Exp. XIV, Thm 1.1]{SGA3} there exists a maximal torus $T\subseteq \Aut(X)$ such that $\bar{T}$ is maximal in $\Aut(\bar{X})$. As all maximal tori are conjugates and $T_{1}$ acts via dense open orbit, so does $T$. Hence, $X$ is toric over $k$. As $T$ acts freely and transitively on $U$, then $U$ is a $T$-torsor over $\Spec k$. If $k$ is a finite field by Lang's Theorem \cite[Thm 16.3]{B91} we have $\HH^{1}_{\et}(k,T)=0$ i.e.\ every $T$-torsor over $\Spec k$ is trivial. Then $U\cong T$ and $U$ is smooth and has a rational point.
\end{proof}
 \begin{proposition}\label{prop: toric del Pezzo surfaces} Let $X$ be a singular del Pezzo surface of degree $d$ over a finite field $k$. Suppose $X$ has singularity type $S$ over $\bar{k}$, and one of the following \begin{enumerate}
\item $d=7$ or $8$ with $S=A_{1}$,
\item $d=6$ with $S=A_{1}$ and $\bar{X}$ has 4 lines, $S=2A_{1}$ or $A_{2}+A_{1}$,
\item $d=5$ with $S=2A_{1}$ or $A_{2}+A_{1}$,
\item $d=4$ with $S=4A_{1},A_{2}+2A_{1}$ or $A_{3}+2A_{1}$,
\item $d=3$ with $S=3A_{2}$,
\end{enumerate} then $X$ has a smooth rational point
 \end{proposition}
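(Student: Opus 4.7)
The plan is to reduce everything to Proposition \ref{prop: toric over k bar then toric over k}: it suffices to verify, case by case in the list, that the geometric surface $\bar{X}$ is toric over $\bar{k}$. Once we know $\bar{X}$ carries a faithful action of $\mathbb{G}_{m,\bar{k}}^{2}$ with dense orbit, Proposition \ref{prop: toric over k bar then toric over k} produces a torus $T/k$, a dense open $T$-torsor $U\subseteq X$, and then Lang's theorem forces $U\cong T$, giving a smooth $k$-point.

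So the real content is showing that each singular del Pezzo on the list is toric over $\bar{k}$. The cleanest way I would organise this is to use the classification of singular del Pezzo surfaces in terms of the graph of negative curves on the minimal desingularisation (already developed in Subsection \ref{sec: graph of negative curves}). For each pair (degree, singularity type) on the list, I would identify the weak del Pezzo $\bar{X}'$ as an explicit iterated blow-up of $\mathbb{P}^{2}_{\bar{k}}$ (or $\mathbb{F}_{2}$ in low-degree cases) at a configuration of points that lies entirely on the toric boundary, i.e.\ on the union of the three torus-invariant lines of $\mathbb{P}^{2}$. Because blowing up a torus-invariant subscheme preserves the torus action (Proposition \ref{prop: group action on blow up}), each such $\bar{X}'$ inherits a faithful $\mathbb{G}_{m}^{2}$-action with dense open orbit, and then the anticanonical contraction $\bar{X}'\to \bar{X}$ is $\mathbb{G}_{m}^{2}$-equivariant, so $\bar{X}$ is toric. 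For instance, the $d=7,8$ case with $S=A_{1}$ is $\Bl_{p}\mathbb{P}^{2}$ (one torus-fixed point blown up, then contracted), the $d=6$ cases correspond to blowing up torus-fixed points and/or infinitely near points along the toric boundary of the hexagonal del Pezzo, and the $d=3$ case $3A_{2}$ is the famous toric cubic $xyz = w^{3}$; one handles the remaining entries by the same recipe.

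Concretely, for each case I would write down the dual graph predicted by Dolgachev's classification (\cite[\S 8]{D12}), match it against the graph obtained from a torus-invariant blow-up configuration, and verify that both agree; since the graph of negative curves together with the intersection form determines a weak del Pezzo up to isomorphism over an algebraically closed field, this identifies $\bar{X}'$ with the toric model. Alternatively, and perhaps more efficiently, one can quote the classification of Gorenstein log del Pezzo toric surfaces via reflexive polygons (there are finitely many, and the list of singularity types appearing is exactly the one in the statement under the degree constraint); the cases enumerated in (1)--(5) are precisely the ones for which the Newton polygon is reflexive.

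The only step that is not purely bookkeeping is ensuring that the case $d=6$, $S=A_{1}$ is correctly distinguished by the extra hypothesis ``$\bar{X}$ has $4$ lines''; without this condition there is a non-toric form of the $A_{1}$-singular sextic del Pezzo (the one with $3$ lines), so the restriction on the number of $(-1)$-curves on $\bar{X}'$ is doing genuine work in forcing the torus-invariant configuration. I expect the main obstacle to be not any single step but the sheer case analysis: one must patiently check each of the eight singularity types in the list, produce its toric model, and confirm the match of graphs. Once that verification is done in all cases, the proof is finished by a single appeal to Proposition \ref{prop: toric over k bar then toric over k}.
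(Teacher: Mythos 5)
Your proposal follows essentially the same route as the paper: the paper's proof is exactly the reduction to Proposition \ref{prop: toric over k bar then toric over k}, with the fact that the listed cases are precisely the toric singular del Pezzo surfaces outsourced to Derenthal's classification (\cite[Chap.\ 1, \S 1.8]{U06}) rather than re-derived. Your case-by-case verification via torus-invariant blow-up configurations (or the reflexive-polygon classification) is just an expansion of that citation, so the argument is correct and matches the paper's.
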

 \begin{proof}
Derenthal gives a description of all possible singular del Pezzo surfaces which are toric \cite[Chap 1. \S 1.8]{U06}. These are exactly the cases in the Proposition. Using Proposition \ref{prop: toric over k bar then toric over k}, we can deduce $X$ has a smooth rational point.
 \end{proof}

\begin{notation}
Throughout the rest of the paper we fix the following notation.
 \begin{enumerate}
\item When drawing a graph of negative curves of a weak del Pezzo surface the filled in nodes correspond to $(-1)$-curves and the unfilled correspond to $(-2)$-curves.
\item  We fix $k$ be a finite field of size $q$.
\item For a singular del Pezzo surface $X$ we denote by $\pi:X'\rightarrow X$ the minimal desingularisation of $X$.
\item We denote by $\tr(\phi^{*})$ the trace of Frobenius on $\Pic \bar{X}'$ and $\tr(\phi^{*}\mid_{\mathcal{R}})$ the trace of Frobenius on the root sublattice $\mathcal{R}\subseteq \Pic \bar{X}'$ generated by $(-2)$-curves.
\end{enumerate}
\end{notation}
In the proceeding statements we do not explicitly state the singularity type. Rather, we have given detailed descriptions of the singularity type in Table \ref{table: dP6,7, and 8} for degrees 8, 7, 6 and Tables \ref{table: dp5}, \ref{table: dp4} for degrees 5 and 4 respectively. Moreover, we have the following key for our classification tables.
\begin{key}
\begin{itemize}
\item The first column of the tables label each class of singular del Pezzo surface.
\item The second column gives the Dynkin diagram type of singular points on $\bar{X}$.
\item The third column gives the  Dynkin diagram type of singular points on $\bar{X}$ which are invariant under the action of $\Gal(\bar{k}/k)$.
\item The fourth column gives the number of $(-1)$-curves on $\bar{X}$. This is of significance as two del Pezzo surfaces $X,X'$ of degree $d$ can have the same singularity type over $\bar{k}$ but a different number of $(-1)$-curves. However, the class of a singular del Pezzo surface over an algebraically closed field is uniquely determined by the Dynkin diagram types for the singular points and the number of $(-1)$-curves. To distinguish del Pezzo surfaces of degree $d$ with the same singularity type but with a different number of $(-1)$-curves, we use the notation $[-]'$ and $[-]''$.
\item The fifth column shows for which $q$ the given singularity type for $X$ posses a smooth $\mathbb{F}_{q}$ point. A $\checkmark$ placed in this entry if for any choice of $q$, this singularity type will have a smooth rational point. Moreover, we put a x in this column if this singularity type does not exist over a perfect field 
\item The sixth and final column gives the location for the proof of the particular singularity type having a smooth point over the stated fields, or a proof of why such a surface cannot exist. 
\end{itemize}
\end{key}

\section{Del Pezzo surfaces of degree 6, 7, and 8}\label{sec: dP7 and dP8}
In Table \ref{table: dP6,7, and 8} we give a classification of del Pezzo surfaces of degree 6, 7 and 8. Note that all singular Pezzo surfaces of degree 7 and 8 are toric, hence Proposition \ref{prop: toric over k bar then toric over k} shows that these surfaces always have a smooth rational point over a finite field.
\FloatBarrier
\begin{table}[H]
\caption{Classification of singular del Pezzo surfaces of degree 6,7 and 8}\label{table: dP6,7, and 8}
\begin{tabular}{|l|l|l|l|l|l|} \hline
Class & Singular points over  & Singular  & Lines & Smooth  & Proof\\ 
 &  algebraic closure & rational points& &Point &\\ \hline \hline
$8.1$ & $A_{1}$ & $A_{1}$ & $0$ & $\checkmark$ & Prop \ref{prop: toric del Pezzo surfaces} \\ \hline
$7.1$ & $A_{1}$ & $A_{1}$ & $2$ & $\checkmark$ & Prop \ref{prop: toric del Pezzo surfaces}\\ \hline
$6.1$ & $A_{1}$ & $A_{1}$ & $4$ & $\checkmark$  & Prop \ref{prop: toric del Pezzo surfaces} \\ \hline
$6.2$ & $A_{1}$ & $A_{1}$ & $3$ & $\checkmark$ & Prop \ref{prop: 6.2}\\ \hline
$6.3$ & $A_{2}$ & $A_{2}$ & $2$ & $\checkmark$ & Prop \ref{prop: 6.3} \\ \hline
$6.4$ & $2A_{1}$ & $\emptyset$ & $2$ & $\checkmark$ & Remark \ref{rem: singular del Pezzo with no singular point over ground field has a smooth point}\\ \hline
$6.5$ & $2A_{1}$ & $2A_{1}$ & $2$ & $\checkmark$ & Corollary \ref{cor: number of singular points not congruent to one mod q then have smooth point}\\ \hline
$6.6$ & $A_{1}+A_{2}$ & $A_{1}+A_{2}$ & $1$ & $\checkmark$ & Corollary \ref{cor: number of singular points not congruent to one mod q then have smooth point}\\ \hline
\end{tabular}
\end{table}

\begin{proposition}\label{prop: 6.2}
Let $X$ be a singular del Pezzo surface of type $6.2$ over $k$, then $X$ has a smooth rational point.
\end{proposition}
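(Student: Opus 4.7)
The plan is to apply Theorem \ref{theorem: Nathan Kaplan result} together with an explicit geometric model for $\bar{X}'$. Since $X$ has a unique singular point of type $A_{1}$ over $\bar{k}$, this point is Galois-invariant and hence defined over $k$; correspondingly, the unique $(-2)$-curve $R$ on $X'$ has Galois-invariant class, so by Lemma \ref{lemma: dan's lemma} $R$ itself is defined over $k$ and $\tr(\phi|_{\mathcal{R}}) = 1$.

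Next I would realise $\bar{X}'$ as the blow up of $\mathbb{P}^{2}_{\bar{k}}$ at three collinear points $p_{1}, p_{2}, p_{3}$, with corresponding exceptional divisors $C_{1}, C_{2}, C_{3}$. The strict transform of the line through the $p_{i}$ has self-intersection $-2$ and is precisely $R$, while the $C_{i}$ are the three $(-1)$-curves on $\bar{X}'$. Writing $\ell$ for the hyperplane class, we have $K_{X'} = -3\ell + C_{1} + C_{2} + C_{3}$ and $R = \ell - C_{1} - C_{2} - C_{3}$, so that $\ell = -\tfrac{1}{2}(K_{X'} + R)$. Since $K_{X'}$ and $R$ are both Frobenius-invariant, so is $\ell$; hence $\phi$ acts on $\Pic \bar{X}'$ by permuting the $C_{i}$ while fixing $\ell$. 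The number of $C_{i}$ fixed by $\phi$ lies in $\{0, 1, 3\}$, and therefore $\tr(\phi|_{E_{3}}) = \tr(\phi|_{\Pic \bar{X}'}) - 1 \geq 0$.

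Substituting into Theorem \ref{theorem: Nathan Kaplan result} I obtain
\[
\#X(\mathbb{F}_{q}) \;=\; q^{2} + q + 1 + q\bigl(\tr(\phi|_{E_{3}}) - 1\bigr) \;\geq\; q^{2} + 1.
\]
The unique singular rational point accounts for only one of these $\mathbb{F}_{q}$-points, leaving at least $q^{2} \geq 4$ smooth rational points on $X$.

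The main technical step is the geometric identification of $\bar{X}'$ with the blow up of $\mathbb{P}^{2}_{\bar{k}}$ at three collinear points. This should follow from the classification of weak del Pezzo surfaces in \cite[Ch.\ 8]{D12}, since the weak del Pezzo surface of degree $6$ with a single $A_{1}$ singularity and exactly three $(-1)$-curves is unique up to isomorphism over $\bar{k}$. With this identification in hand, the permutation description of the Frobenius action on $\Pic \bar{X}'$ and the ensuing point count are immediate.
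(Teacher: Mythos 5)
Your proof is correct and takes essentially the same route as the paper: both arguments feed $\tr(\phi|_{\mathcal{R}})=1$ and $\tr(\phi|_{E_{3}})\geq 0$ into Theorem \ref{theorem: Nathan Kaplan result} to get $\#X(\mathbb{F}_{q})\geq q^{2}+1$ and then discard the single singular rational point. The only difference is cosmetic: you justify the trace bound by hand from the explicit model of $\bar{X}'$ as the blow up of three collinear points (Frobenius permuting the three exceptional curves and fixing $\ell=-\tfrac{1}{2}(K_{X'}+R)$), whereas the paper reads the same bound off the graph of negative curves via Algorithm \ref{algorithm: determine traces on singular del pezzo}.
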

\begin{proof}
Consider the graph of negative curves $\Gamma$ \cite[Prop 8.3, Diagram 2]{CT88} on $\bar{X}'$. Using algorithm \ref{algorithm: determine traces on singular del pezzo}, we see that $\tr(\phi^{*})\geq 1$. Moreover, as there is only one $(-2)$-curve on $\bar{X}'$ we have $\tr(\phi^{*}\mid_{\mathcal{R}})=1$. Hence, $\#X(k)\geq q^2+1\geq 2$ as $q\geq 2$.
\end{proof}

\begin{proposition}\label{prop: 6.3}
Let $X$ be a singular del Pezzo surface of type $6.3$ over $k$, then $X$ has a smooth rational point.
\end{proposition}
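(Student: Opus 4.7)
The strategy mirrors Proposition \ref{prop: 6.2}: I would apply Theorem \ref{theorem: Nathan Kaplan result} via Algorithm \ref{algorithm: determine traces on singular del pezzo} to bound $\#X(k)$ from below, then subtract the number of singular rational points. Since type $6.3$ has a single $A_2$ singular point which is rational over $k$, we have $\delta = 1$, and it suffices to show $\#X(k)\geq 2$ in every case.

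The first step is to consult the graph of negative curves $\Gamma$ on $\bar{X}'$ from \cite[Prop 8.3]{CT88}. For type $6.3$ one has two $(-1)$-curves $L_1, L_2$ and two $(-2)$-curves $E_1, E_2$ forming an $A_2$ chain, arranged so that $L_i$ meets $E_i$ and $L_1$ meets $L_2$ (i.e.\ a $4$-cycle $L_1 - E_1 - E_2 - L_2 - L_1$). Since $\Pic\bar{X}'$ has rank $4$, a quick check of the intersection matrix shows these four curves form a $\mathbb{Q}$-basis of $\Pic\bar{X}' \otimes \mathbb{Q}$, so the graph automorphisms of $\Gamma$ preserving curve types lift to actions on the Picard lattice. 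The automorphism group of $\Gamma$ respecting the $(-1)/(-2)$ labelling has order $2$: the identity and the simultaneous swap $L_1 \leftrightarrow L_2$, $E_1 \leftrightarrow E_2$ (a single-pair swap is not a graph automorphism, since it fails to preserve the $4$-cycle edges).

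For each possible Frobenius action I would compute $\tr(\phi^*)$ on $\Pic\bar{X}'$, then pass to $\tr(\phi|_{E_3})$ by subtracting the contribution $1$ from the Frobenius-fixed class $-K_{X'} = L_1 + L_2 + E_1 + E_2$, and compute $\tr(\phi^*|_\mathcal{R})$ on the rank-$2$ sublattice spanned by $E_1, E_2$. The identity action gives $\tr(\phi|_{E_3}) = 3$, $\tr(\phi^*|_\mathcal{R}) = 2$, hence $t = 1$ and $\#X(k) = q^2 + 2q + 1$; the double-swap action gives $\tr(\phi|_{E_3}) = -1$, $\tr(\phi^*|_\mathcal{R}) = 0$, hence $t = -1$ and $\#X(k) = q^2 + 1$. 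In both cases $\#X(k) \geq q^2 + 1 \geq 5 > \delta$, so a smooth $k$-point exists.

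The main obstacle is simply combinatorial bookkeeping: correctly transcribing the graph $\Gamma$ from \cite{CT88} and verifying that the four negative curves are linearly independent (so that graph automorphisms coincide with lattice automorphisms). Once the configuration is fixed, the trace computations are routine linear algebra, and no case split in $q$ is needed because the worst-case lower bound $q^2 + 1$ already handles every finite field.
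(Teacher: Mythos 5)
Your strategy is exactly the paper's: read off the graph of negative curves $\Gamma$, enumerate the possible Frobenius actions through $\Aut(\Gamma)$, and feed the traces into Theorem \ref{theorem: Nathan Kaplan result}; your final point counts ($q^2+2q+1$ for the trivial action, $q^2+1$ for the non-trivial one) agree with the paper's and the conclusion is correct. However, you have transcribed $\Gamma$ incorrectly, and this corrupts your intermediate data. Type $6.3$ arises from blowing up a chain of three infinitely near points, so in the basis of Proposition \ref{prop: picard group of wdP} the $(-2)$-curves are $A=l_1-l_2$ and $B=l_2-l_3$ and the $(-1)$-curves are $C=l_3$ and $D=l_0-l_1-l_2$. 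A direct computation gives $C\cdot D=0$, $A\cdot C=A\cdot D=0$ and $A\cdot B=B\cdot C=B\cdot D=1$: the graph is a star with centre the $(-2)$-curve $B$, not a $4$-cycle. The two lines do not meet each other, and they both meet the \emph{same} $(-2)$-curve. Hence $\Aut(\Gamma)\cong C_2$ is generated by the transposition of $C$ and $D$ alone, every automorphism fixes each $(-2)$-curve individually, and $\tr(\phi^{*}\mid_{\mathcal{R}})=2$ in both cases; your value $0$ for the swap is impossible here, since no graph automorphism can interchange $A$ and $B$ (only $B$ meets a $(-1)$-curve). The correct traces for the non-trivial action are $\tr(\phi^{*})=2$, $\tr(\phi\mid_{E_3})=1$, $\tr(\phi^{*}\mid_{\mathcal{R}})=2$, which still give $t=-1$. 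That your wrong graph produced the right $t$ is not luck so much as structure: over $\mathbb{Q}$ one has $t=\tr$ of Frobenius on the rank-one quotient $(E_3/\mathcal{R})\otimes\mathbb{Q}$, on which a non-trivial involution acts by $-1$ in either picture. So the skeleton of your argument is sound and matches the paper, but the configuration of curves and the trace on $\mathcal{R}$ need to be corrected before the computation can be called a proof.
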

\begin{proof}
Using algorithm \ref{algorithm: determine traces on singular del pezzo}, we see that the graph of negative curves $\Gamma$ \cite[Prop 8.3, Diagram 4]{CT88} on $\bar{X}'$ has an automorphism group $\Aut(\Gamma)\cong C_{2}$. If the action of Galois on $\bar{X}'$ is trivial then $\tr(\phi^{*})=4$ and $\tr(\phi^{*}\mid_{\mathcal{R}})=2$. If Galois acts via the non-trivial automorphism of $\Gamma$, then $\tr(\phi^{*})=2$ and $\tr(\phi^{*}\mid_{\mathcal{R}})=2$. Hence, $\#X(k)=q^2+1\geq 2$ as $q\geq 2$.
\end{proof}
\begin{corollary}\label{cor: degree 6, 7 and 8}
Every singular del Pezzo surface of degree 6, 7 or 8 over a finite field has a smooth rational point.
\end{corollary}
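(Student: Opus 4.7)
The strategy is a straightforward case analysis, since Table~\ref{table: dP6,7, and 8} is designed precisely to enumerate every class of singular del Pezzo surface in these degrees. The proof of the corollary is just the observation that each row of the table has been dealt with, so my job is to verify that the table is exhaustive and that every row is accounted for.

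First I would recall, via Proposition~\ref{prop: picard group of wdP} and the classification of root subsystems of $E_{9-d}$, that over $\bar{k}$ the geometric isomorphism class of a singular del Pezzo surface of degree $d\in\{6,7,8\}$ is determined by its singularity type together with the number of $(-1)$-curves on the minimal desingularisation (this is the finer invariant used to distinguish $6.1$ from $6.2$, for instance). A direct enumeration over $\bar{k}$ using the Dynkin-type data in Proposition~\ref{prop: resolution of RDP surfaces and Dynkin diagrams} yields exactly the eight classes $8.1$, $7.1$, $6.1$--$6.6$ appearing in the table; this step should be brief since the possibilities for the root sublattice $\mathcal{R}\subset E_{9-d}$ are very few in these degrees.

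Next I would walk through each class in turn. Classes $8.1$, $7.1$, $6.1$ are toric over $\bar k$ and so, by Proposition~\ref{prop: toric del Pezzo surfaces}, admit a smooth $k$-point. Classes $6.2$ and $6.3$ are handled by Propositions~\ref{prop: 6.2} and \ref{prop: 6.3}. For $6.4$, every geometric singular point lies in a Galois orbit of length $>1$, so there are no singular $k$-points and Remark~\ref{rem: singular del Pezzo with no singular point over ground field has a smooth point} applies. For $6.5$ and $6.6$, the number $\delta$ of singular rational points equals $2$, hence $\delta\not\equiv 1\pmod q$ for every $q$, and Corollary~\ref{cor: number of singular points not congruent to one mod q then have smooth point} finishes the job. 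Remark~\ref{rem: smooth del Pezzo has smooth point} handles the smooth del Pezzo case that is technically also included in Definition~\ref{defn: singular del Pezzo}.

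The only mildly delicate step is confirming completeness of the table, i.e.\ that no Galois form over a \emph{finite} field has been missed; but since every singularity type listed is treated uniformly for every $q$, and any additional geometric class would still have to fit into the combinatorics of $E_{9-d}$ that was enumerated in the first step, no further cases can occur. I expect this completeness check to be the only point requiring care; the rest is a bookkeeping pass over the table.
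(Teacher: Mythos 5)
Your proposal is correct and follows essentially the same route as the paper, which proves the corollary implicitly by the case analysis recorded in Table~\ref{table: dP6,7, and 8}: classes $8.1$, $7.1$, $6.1$ via Proposition~\ref{prop: toric del Pezzo surfaces}, classes $6.2$ and $6.3$ via Propositions~\ref{prop: 6.2} and \ref{prop: 6.3}, class $6.4$ via Remark~\ref{rem: singular del Pezzo with no singular point over ground field has a smooth point}, and classes $6.5$, $6.6$ via Corollary~\ref{cor: number of singular points not congruent to one mod q then have smooth point}. Your added attention to the completeness of the enumeration and to the smooth case (Remark~\ref{rem: smooth del Pezzo has smooth point}) is a reasonable supplement but does not change the argument.
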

\section{Del Pezzo surfaces of degree 5}
We now deal with singular del Pezzo surfaces of degree 5.
\FloatBarrier
\begin{table}[H]
\caption{Classification of singular del Pezzo surfaces of degree 5}\label{table: dp5}
\begin{tabular}{|l|l|l|l|l|l|} \hline
Class & Singular points over  & Singular  & Lines & Smooth  & Proof\\ 
 &  algebraic closure & rational points& &Point &\\ \hline \hline
$5.1$ & $A_{1}$ & $A_{1}$ & $7$ & $\checkmark$ & Prop \ref{prop: 5.1}\\ \hline
$5.2$ & $2A_{1}$ & $\emptyset$ & $5$ & $\checkmark$ & Remark \ref{rem: singular del Pezzo with no singular point over ground field has a smooth point} \\ \hline
$5.3$ & $2A_{1}$ & $2A_{1}$ & $5$ & $\checkmark$ & Corollary \ref{cor: number of singular points not congruent to one mod q then have smooth point}/Prop \ref{prop: toric del Pezzo surfaces} \\ \hline
$5.4$ & $A_{2}$ & $A_{2}$ & $4$ & $\checkmark$ & Prop \ref{prop: 5.4}\\ \hline
$5.5$ & $A_{1}+A_{2}$ & $A_{1}+A_{2}$ & $3$ & $\checkmark$ & Corollary \ref{cor: number of singular points not congruent to one mod q then have smooth point}/Prop \ref{prop: toric del Pezzo surfaces}\\ \hline
$5.6$ & $A_{3}$ & $A_{3}$ & $2$ & $\checkmark$ & Prop \ref{prop: 5.6,5.7} \\ \hline
$5.7$ & $A_{4}$ & $A_{4}$ & $1$ & $\checkmark$ & Prop \ref{prop: 5.6,5.7}\\ \hline
\end{tabular}
\end{table}
\FloatBarrier

\begin{proposition}\label{prop: 5.1}
Let $X$ be a singular del Pezzo surface of type $5.1$ over $k$, then $X$ has a smooth rational point.
\end{proposition}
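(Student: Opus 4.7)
The plan is to mirror the strategy used in Propositions \ref{prop: 6.2} and \ref{prop: 6.3}: identify the graph of negative curves $\Gamma$ on $\bar{X}'$, apply Algorithm \ref{algorithm: determine traces on singular del pezzo} to enumerate the possible Galois actions through $\Aut(\Gamma)$, compute the corresponding traces of Frobenius on $\Pic \bar{X}'$, and then combine Theorem \ref{theorem: Nathan Kaplan result} with the fact that there is only one singular rational point to conclude.

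First I would read off the graph $\Gamma$ for a degree-$5$ weak del Pezzo surface with a single $A_{1}$ singularity and seven $(-1)$-curves from the classification in \cite{CT88}: this gives one unfilled vertex together with seven filled vertices with a specific adjacency pattern. Since the singular point is required to be defined over $k$ (see the third column of Table \ref{table: dp5}), $\Frob_{q}$ must fix the unique $(-2)$-curve, so $\tr(\phi^{*}\mid_{\mathcal{R}})=1$, and the Galois action on $\Gamma$ factors through the subgroup of $\Aut(\Gamma)$ that stabilises this vertex.

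Next, for each candidate automorphism in this stabiliser I would use the algorithm to compute $\tr(\phi^{*})$ on $\Pic \bar{X}'$ via the quotient $M/F$, obtaining a finite list of possible values for $t = \tr(\phi^{*}) - \tr(\phi^{*}\mid_{\mathcal{R}})$. By Theorem \ref{theorem: Nathan Kaplan result}, $\#X(k) = q^{2} + q + 1 + qt$, and since $\#X(k) \equiv 1 \pmod{q}$, it is enough to verify that $t \geq -q$ in every case, which already forces $\#X(k) \geq q+1 \geq 3$. Because there is exactly one singular $k$-point, at least one of the remaining rational points lies in the smooth locus, yielding the conclusion.

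The main obstacle is the enumeration step: correctly identifying which elements of $\Aut(\Gamma)$ fix the $(-2)$-vertex and then computing the induced trace on $\Pic \bar{X}'$ faithfully through $M/F$. This is in principle a routine calculation with finite-order integer matrices, but the list of graph automorphisms has to be handled with care to ensure that no potential Galois representation is overlooked. Once the complete list of possible traces is available, the counting argument itself is immediate.
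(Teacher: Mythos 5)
Your proposal is sound, but it takes a genuinely different route from the paper. The paper's proof is purely geometric: it observes from the diagram that every automorphism of $\Gamma$ fixes the $(-1)$-curve $l_{34}$, which by Lemma \ref{lemma: dan's lemma} is therefore defined over $k$; being a genus-$0$ curve over a finite field it has $q+1$ rational points, and since $l_{34}\cdot(l_{1}-l_{2})=0$ none of them lies on the $(-2)$-curve, so Proposition \ref{prop: point lying away from -2 curve} finishes the argument with no point count at all. You instead invoke the counting machinery of Theorem \ref{theorem: Nathan Kaplan result} and Algorithm \ref{algorithm: determine traces on singular del pezzo}, which is the strategy the paper reserves for cases such as $6.2$, $6.3$ and several degree-$4$ types. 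Your route does work, but note two things. First, you leave the decisive enumeration unfinished; it should be completed, though it can be short-circuited: since there is a unique $(-2)$-curve, $\tr(\phi^{*}\mid_{\mathcal{R}})=1$, and since $E_{4}\cong A_{4}$ with Weyl group $S_{5}$ acting by the reflection representation, $\tr(\phi\mid_{E_{4}})\geq -1$ for \emph{every} element, so $t\geq -2\geq -q$ without inspecting $\Aut(\Gamma)$ at all, giving $\#X(k)\geq q^{2}-q+1\geq 3>1=\delta$. Second, your phrasing ``since $\#X(k)\equiv 1\pmod q$ it is enough to verify $t\geq -q$'' is slightly off: the congruence is not needed once $t\geq -q$ is known, and conversely the congruence alone (as in Corollary \ref{cor: number of singular points not congruent to one mod q then have smooth point}) cannot help here because $\delta=1\equiv 1\pmod q$. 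The paper's argument buys uniformity and avoids any computation; yours buys nothing extra here but is a legitimate alternative once the trace bound is actually established.
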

\begin{proof}
Consider the graph of negative curves $\Gamma$ on $\bar{X}'$ as shown in \cite[Prop 8.5, Diagram 1]{CT88}
\begin{center}
\begin{tikzpicture}
\draw (-4,0) -- (2,0);
\draw (2,0) -- (4,0);
\draw (-4,0) -- (-2,2);
\draw (-2,2) -- (2,2);
\draw (2,2) -- (4,0);
\draw (-4,0) -- (-2,-2);
\draw (-2,-2) -- (2,-2);
\draw (2,-2) -- (4,0);
\node[draw, circle,fill=white,label = left:$l_{1}-l_{2}$]at (-4,0){};
\node[draw, circle,fill=black,label = $l_{13}$]at (-2,2){};
\node[draw, circle,fill=black,label = $l_{2}$]at (-2,0){};
\node[draw, circle,fill=black,label = $l_{12}$]at (2,0){};
\node[draw, circle,fill=black,label =right: $l_{34}$]at (4,0){};
\node[draw, circle,fill=black,label = $l_{3}$]at (2,2){};
\node[draw, circle,fill=black,label = $l_{12}$]at (-2,-2){};
\node[draw, circle,fill=black,label = $l_{4}$]at (2,-2){};
.
\end{tikzpicture}
\end{center}
 Any possible automorphism of $\Gamma$ will fix the node $l_{34}$ i.e.\ the action of $\Gal(\bar{k}/k)$ on $\bar{X}'$ will always fix a $(-1)$-curve. As this curve does not intersect the $(-2)$-curve $l_{1}-l_{2}$ we can deduce using Proposition \ref{prop: point lying away from -2 curve} that $X$ has a smooth rational point.
\end{proof}
\begin{proposition}\label{prop: 5.4}
Let $X$ be a singular del Pezzo surface of type $5.4$ over $k$, then $X$ has a smooth rational point.
\end{proposition}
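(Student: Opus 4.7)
The plan is to apply the same strategy used in Propositions \ref{prop: 6.2}, \ref{prop: 6.3}, and \ref{prop: 5.1}. I would begin by reading off the graph of negative curves $\Gamma$ on $\bar{X}'$ from \cite[Prop 8.5]{CT88}: it has four filled vertices for the $(-1)$-curves and two unfilled vertices forming the $A_{2}$ Dynkin diagram attached to the unique (rational) singular point. I would then enumerate $\Aut(\Gamma)$; since the $A_{2}$ diagram admits the involutive swap of its two nodes, the elements of $\Aut(\Gamma)$ either fix the two $(-2)$-curves individually or exchange them, and in each case they induce a compatible permutation of the four $(-1)$-curves which must preserve the intersection form.

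For each possible Galois action $g\in\Aut(\Gamma)$ arising this way, I would apply Algorithm \ref{algorithm: determine traces on singular del pezzo} to compute $\tr(\phi^{*})$ on $\Pic\bar{X}'\otimes\mathbb{R}$ and $\tr(\phi^{*}\mid_{\mathcal{R}})$ on the $A_{2}$ root sublattice (which equals $2$ when the two $(-2)$-curves are fixed and $0$ when they are swapped). Plugging into Theorem \ref{theorem: Nathan Kaplan result} produces a formula of the shape $\#X(k)=q^{2}+q+1+q\bigl(\tr(\phi\mid_{E_{4}})-\tr(\phi\mid_{\mathcal{R}})\bigr)$, and I expect the outcome in every case to be at least $q^{2}+1\geq 5$. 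Since $X$ has exactly one singular rational point, this forces a smooth rational point on $X$. Alternatively, if the graph analysis reveals some $(-1)$-curve $E$ whose class is fixed by every $g\in\Aut(\Gamma)$ and which meets neither $(-2)$-curve, then Lemma \ref{lemma: dan's lemma} descends $E$ to a curve defined over $k$, and applying Proposition \ref{prop: point lying away from -2 curve} to any of the $q+1\geq 3$ rational points of $E\cong\mathbb{P}^{1}_{k}$ yields the smooth rational point directly, exactly as in Proposition \ref{prop: 5.1}.

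The main obstacle is handling the swap case carefully: when Galois exchanges the two $(-2)$-curves, the induced permutation on $(-1)$-curves may have no fixed vertex, so the ``fixed $(-1)$-curve'' shortcut can fail. One then has to verify via Algorithm \ref{algorithm: determine traces on singular del pezzo} that the trace on $\Pic\bar{X}'$ remains large enough, i.e.\ $\tr(\phi\mid_{E_{4}})\geq \tr(\phi\mid_{\mathcal{R}})$, so that the Kaplan count still gives $\#X(k)\geq q^{2}+1$. This is a finite combinatorial check once the graph $\Gamma$ and the list of $\Aut(\Gamma)$-orbits are in hand.
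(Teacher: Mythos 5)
Your proposal is correct, and in fact it contains the paper's argument as its ``alternative'' branch: the paper's proof of Proposition \ref{prop: 5.4} is exactly the fixed-$(-1)$-curve shortcut. The graph of negative curves here is a tree in which the two $(-2)$-curves $l_{1}-l_{2}$ and $l_{2}-l_{3}$ occupy graph-theoretically distinguishable positions (one has valence $2$, the other valence $3$), so $\Aut(\Gamma)\cong C_{2}$ acts only by swapping the two end lines $l_{3}$ and $l_{12}$; in particular the swap of the two $(-2)$-curves that you flag as the main obstacle cannot occur, and the lines $l_{4}$ and $l_{14}$ are fixed by every automorphism. The paper then invokes Lemma \ref{lemma: dan's lemma} and Proposition \ref{prop: point lying away from -2 curve} for such a fixed line meeting at most one $(-2)$-curve (the curve named in the paper's proof appears to be a typo for $l_{14}$ or $l_{4}$). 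Your primary route via Algorithm \ref{algorithm: determine traces on singular del pezzo} and Theorem \ref{theorem: Nathan Kaplan result} also goes through: the trivial action gives $t=4-2=2$ and the nontrivial one gives $t=2-2=0$, so $\#X(k)\geq q^{2}+q+1$ in all cases, which exceeds the single singular rational point. The trace computation is more robust (it needs no luck in finding a well-placed fixed line) but requires the explicit enumeration of Frobenius actions; the paper's geometric argument is shorter once one notices the fixed line $l_{4}$ disjoint from the $(-2)$-curves. Either version is a complete proof.
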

\begin{proof}
Consider the graph of negative curves $\Gamma$ on $\bar{X}'$ as shown in \cite[Prop 8.5, Diagram 3]{CT88}
\begin{center}
\begin{tikzpicture}
\draw (-4,0) -- (-2,0);
\draw (-2,0) -- (0,0);
\draw (0,0) -- (2,0);
\draw (2,0) -- (3,-2.5);
\draw (2,0) -- (3,2.5);
\node[draw, circle,fill=black,label = $l_{4}$]at (-4,0){};
\node[draw, circle,fill=black,label = $l_{14}$]at (-2,0){};
\node[draw, circle,fill=white,label = $l_{1}-l_{2}$]at (0,0){};
\node[draw, circle,fill=white,label = $l_{2}-l_{3}$]at (2,0){};
\node[draw, circle,fill=black,label = $l_{3}$]at (3,2.5){};
\node[draw, circle,fill=black,label = $l_{12}$]at (3,-2.5){};
\end{tikzpicture}
\end{center}
We see the $(-1)$-curve $l_{1}-l_{2}$ is fixed under the action of Galois and only intersects one $(-2)$-curve, hence we can apply Proposition \ref{prop: point lying away from -2 curve} to deduce $X$ has a smooth rational point.
\end{proof}
\begin{proposition}\label{prop: 5.6,5.7}
Let $X$ be a singular del Pezzo surface of type $5.6$ or $5.7$ over $k$, then $X$ has a smooth rational point.
\end{proposition}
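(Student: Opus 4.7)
The plan is to handle types $5.7$ and $5.6$ separately, starting from the graph of negative curves $\Gamma$ on $\bar{X}'$ recorded in \cite[Prop 8.5]{CT88}.

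For type $5.7$ the graph is an $A_{4}$ chain of $(-2)$-curves with a single $(-1)$-curve $E$ attached to one end. Since $E$ is the only $(-1)$-curve on $\bar{X}'$, its divisor class is automatically $\Gal(\bar{k}/k)$-invariant, so Lemma \ref{lemma: dan's lemma} provides a representative defined over $k$. Then $E \cong \PP^{1}_{k}$ meets exactly one $(-2)$-curve, in a single point, and therefore carries at least $q \geq 2$ rational points lying off every $(-2)$-curve. Proposition \ref{prop: point lying away from -2 curve} then produces the desired smooth $k$-point on $X$.

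For type $5.6$ the graph consists of an $A_{3}$ chain of $(-2)$-curves with two $(-1)$-curves attached at the opposite ends of the chain, so $\Aut(\Gamma) \cong C_{2}$, acting by the obvious flip. I split into two cases according to whether $\Gal(\bar{k}/k)$ acts trivially on $\Gamma$ or by the flip. In the trivial case each $(-1)$-curve is defined over $k$ by Lemma \ref{lemma: dan's lemma}, and the argument from type $5.7$ applies verbatim. In the flip case I appeal to algorithm \ref{algorithm: determine traces on singular del pezzo}: the flip has a $3$-dimensional fixed subspace and a $2$-dimensional anti-fixed subspace on $\Pic \bar{X}' \otimes \RR$, whence $\tr(\phi^{*}) = 1$ and $\tr(\phi^{*}|_{E_{4}}) = 0$; on $\mathcal{R} \cong A_{3}$ the flip has trace $1$. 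Theorem \ref{theorem: Nathan Kaplan result} then yields $\#X(k) = q^{2} + 1$. As the flip preserves the $A_{3}$ chain setwise, $X$ has exactly one singular $k$-point, leaving at least $q^{2} \geq 4$ smooth $k$-points on $X$.

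The main obstacle is just pinning down the precise shape of $\Gamma$ and verifying that the flip is indeed the only non-trivial graph automorphism in type $5.6$; both are easily extracted from Dolgachev's classification of exceptional and root vectors (\cite[Prop 8.2.19, Prop 8.3.7]{D12}) or read directly off the Coray--Tsfasman diagram, after which the point count that finishes the flip case is routine linear algebra, exactly what algorithm \ref{algorithm: determine traces on singular del pezzo} is designed to execute.
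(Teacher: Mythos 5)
Your overall strategy is the same as the paper's: locate a Galois-invariant $(-1)$-curve, descend it to $k$ via Lemma \ref{lemma: dan's lemma}, observe that it meets the $(-2)$-locus in few points, and invoke Proposition \ref{prop: point lying away from -2 curve}. For type $5.7$ your argument is correct, and the observation that the \emph{unique} $(-1)$-curve is automatically Galois-invariant is a cleaner justification than the paper gives. (Minor point: that curve is $l_{4}$, attached to $l_{3}-l_{4}$, the third node of the $A_{4}$ chain rather than an end; this changes nothing.)

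For type $5.6$, however, you have the graph wrong. The two $(-1)$-curves are $l_{12}=l_{0}-l_{1}-l_{2}$ and $l_{4}$; a direct check against the roots $l_{1}-l_{2},\,l_{2}-l_{3},\,l_{3}-l_{4}$ (or the diagram in \cite[Prop 8.5]{CT88}) shows $l_{12}$ meets the \emph{middle} node $l_{2}-l_{3}$ and $l_{4}$ meets the end node $l_{3}-l_{4}$, and the two lines are disjoint. The three white vertices then have degrees $1$, $3$, $2$ respectively, so they are pairwise distinguished and $\Aut(\Gamma)$ is trivial --- there is no flip. Your entire second case (the $C_{2}$ action, the trace computation $\tr(\phi^{*})=1$, the count $\#X(k)=q^{2}+1$) is an analysis of a configuration that does not occur for this surface. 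The proof survives only because your first case --- trivial Galois action on $\Gamma$, each $(-1)$-curve defined over $k$ and meeting exactly one $(-2)$-curve, hence carrying $q\geq 2$ rational points off the $(-2)$-locus --- is in fact the only case, and you handle it correctly. You should replace the case split by the observation that $\Gamma$ is rigid. I would also note that your bookkeeping here (each line meets exactly one $(-2)$-curve, leaving $q\geq 2$ usable points) is more careful than the paper's own proof, which misidentifies the fixed $(-1)$-curve as ``$l_{1}-l_{2}$'' (a $(-2)$-class in its own diagram) and asserts it meets no $(-2)$-curves.
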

\begin{proof} Denote by $\Gamma_{1}$ (resp. $\Gamma_{2}$) the graph of negative curves on $\bar{X}'$ if $X$ is of type 5.7 (resp. 5.8). Then
\begin{center}
\begin{tikzpicture}
\draw (0,0) -- (2,0);
\draw (2,0) -- (4,0);
\draw (4,0) -- (6,0);
\draw (2,0) -- (2,-2);
\draw (8,0) -- (10,0);
\draw (10,0) -- (12,0);
\draw (12,0) -- (14,00);
\draw (12,0) -- (12,-2);
\node[draw, circle,fill=white,label=$l_{1}-l_{2}$]at (0,0){};
\node[draw]  [label=right:] at (-1,-2) {$\Gamma_{1}$}; 
\node[draw]  [label=right:] at (8,-2) {$\Gamma_{2}$}; 
\node[draw, circle,fill=white,label=$l_{2}-l_{3}$]at (2,0){};
\node[draw, circle,fill=black,label=below:$l_{12}$]at (2,-2){};
\node[draw, circle,fill=white,label=$l_{3}-l_{4}$]at (4,0){};
\node[draw, circle,fill=black,label=$l_{4}$]at (6,0){};
\node[draw, circle,fill=white,label = $l_{1}-l_{2}$]at (8,0){};
\node[draw, circle,fill=white,label = $l_{2}-l_{3}$]at (10,0){};
\node[draw, circle,fill=white,label = $l_{3}-l_{4}$]at (12,0){};
\node[draw, circle,fill=black,label = below:$l_{4}$]at (12,-2){};
\node[draw, circle,fill=white,label = $l_{0}-l_{1}-l_{2}-l_{3}$]at (14,0){};
\end{tikzpicture}
\end{center} 
By observing the $(-1)$-curve $l_{1}-l_{2}$ will be fixed in both graphs and does not intersect any $(-2)$-curves we can deduce using Proposition \ref{prop: point lying away from -2 curve} that $X$ has a smooth rational point.
\end{proof}
\begin{corollary}\label{cor: degree 5}
Every singular del Pezzo surface of degree 5 over a finite field has a smooth rational point.
\end{corollary}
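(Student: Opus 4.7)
The plan is to argue that the corollary is essentially a bookkeeping consequence of the classification recorded in Table \ref{table: dp5} together with the preceding propositions. First I would note that the classification of singular del Pezzo surfaces of degree $5$ over $\bar{k}$ divides them, according to singularity type and number of $(-1)$-curves, into the seven classes $5.1$ through $5.7$ listed in the table. Over $k$, each such surface $X$ falls into exactly one of these classes (since base change to $\bar k$ preserves singularity type and number of lines), so it suffices to dispatch each class individually.

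I would then walk through the rows of Table \ref{table: dp5} citing the proof already in hand: class $5.1$ is handled by Proposition \ref{prop: 5.1}; class $5.2$ has no singular $k$-point and so falls under Remark \ref{rem: singular del Pezzo with no singular point over ground field has a smooth point}; class $5.3$ is toric when defined over $k$, so Proposition \ref{prop: toric del Pezzo surfaces} applies, and in any case has $\delta=2$ singular rational points so Corollary \ref{cor: number of singular points not congruent to one mod q then have smooth point} suffices; class $5.4$ is Proposition \ref{prop: 5.4}; class $5.5$ is toric (Proposition \ref{prop: toric del Pezzo surfaces}) and also has $\delta=2$, giving two independent routes; and classes $5.6$ and $5.7$ are covered by Proposition \ref{prop: 5.6,5.7}. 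In every case the relevant proposition yields a smooth $\mathbb{F}_q$-point on $X$, completing the argument.

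There is no serious technical obstacle here, since all the heavy lifting has been done earlier: the point counts via Theorem \ref{theorem: Nathan Kaplan result}, the Galois-fixed $(-1)$-curve arguments via Lemma \ref{lemma: dan's lemma} and Proposition \ref{prop: point lying away from -2 curve}, and the toric reduction via Proposition \ref{prop: toric over k bar then toric over k}. The only genuine content of the corollary is the completeness of the classification, which is why the table lists exactly seven classes; I would remark on this by appealing to \cite[Prop 8.5]{CT88} (or \cite[\S 8]{D12}) to confirm that no further degenerations occur in degree $5$, so that the case analysis is exhaustive.
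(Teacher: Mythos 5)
Your proof is correct and matches the paper's (implicit) argument exactly: the corollary follows by checking that the seven classes in Table \ref{table: dp5} exhaust all singular del Pezzo surfaces of degree 5 and that each row is settled by the cited proposition, remark, or corollary. Nothing further is needed.
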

\section{Del Pezzo surfaces of degree 4}\label{Sec: dp4 section}
This section is broken into two parts. The first deals with odd characteristic where we can give very geometric reasoning on why singular del Pezzo surfaces of degree 4 have a smooth point. The second uses algorithm \ref{algorithm: determine traces on singular del pezzo}, but works in arbitrary characteristic.

\begin{longtable}{|l|l|l|l|l|l|}
\caption{Classification of singular del Pezzo surfaces of degree 4}\label{table: dp4}\\\hline
Class & Singular points over  & Singular  & Lines & Smooth  & Proof\\ 
 &  algebraic closure & rational points& &Point &\\ \hline \hline
$4.1$ & $A_{1}$ & $A_{1}$ & $12$ & $\checkmark$ & Prop \ref{prop: 4.1}  \\ \hline
$4.2$ & $[2A_{1}]'$ & $\emptyset$ & $9$ & $\checkmark$ & Remark \ref{rem: singular del Pezzo with no singular point over ground field has a smooth point}\\ \hline
$4.3$ & $[2A_{1}]'$ & $2A_{1}$ & $9$ & $\checkmark$ & Corollary \ref{cor: number of singular points not congruent to one mod q then have smooth point}\\ \hline
$4.4$ & $[2A_{1}]''$ & $\emptyset$ & $8$ & $\checkmark$ & Remark \ref{rem: singular del Pezzo with no singular point over ground field has a smooth point}\\ \hline
$4.5$ & $[2A_{1}]''$ & $2A_{1}$ & $8$ & $\checkmark$ & Corollary \ref{cor: number of singular points not congruent to one mod q then have smooth point}\\ \hline
$4.6$ & $A_{2}$ & $A_{2}$ & $8$ & $\checkmark$ & Prop \ref{prop: 4.6}\\ \hline
$4.7$ & $3A_{1}$ & $\emptyset$ & $6$ & $\checkmark$ &  Remark \ref{rem: singular del Pezzo with no singular point over ground field has a smooth point}\\ \hline
$4.8$ & $3A_{1}$ & $A_{1}$ & $6$ & $\checkmark$ & Prop \ref{proposition: 4.8}\\ \hline
$4.9$ & $3A_{1}$ & $3A_{1}$ & $6$ & $\checkmark$ & Prop \ref{prop: 4.9,4.11,4.20,4.24} \\ \hline
$4.10$ & $A_{1}+A_{2}$ & $A_{1}+A_{2}$ & $6$ & $\checkmark$ & Corollary \ref{cor: number of singular points not congruent to one mod q then have smooth point}\\ \hline
$4.11$ & $[A_{3}]'$ & $A_{3}$ & $5$ & $\checkmark$ & Prop \ref{prop: 4.9,4.11,4.20,4.24}  \\ \hline
$4.12$ & $[A_{3}]''$ & $A_{3}$ & $4$ & \checkmark & Prop \ref{prop:4.12} \\ \hline
$4.13$ & $A_{1}+A_{3}$ & $A_{1}+A_{3}$ & $3$ & $\checkmark$ & Corollary \ref{cor: number of singular points not congruent to one mod q then have smooth point} \\ \hline
$4.14$ & $A_{2}+2A_{1}$ & $A_{2}$ & $4$ & $\checkmark$ & Prop \ref{prop:4.14,4.15}\\ \hline
$4.15$ & $A_{2}+2A_{1}$ & $A_{2}+2A_{1}$ & $4$ & $\checkmark$ &Prop \ref{prop:4.14,4.15}\\ \hline
$4.16$ & $4A_{1}$ & $\emptyset$ & $4$ & $\checkmark$ & Remark \ref{rem: singular del Pezzo with no singular point over ground field has a smooth point}\\ \hline
$4.17$ & $4A_{1}$ & $A_{1}$ & $4$ & x  & Prop \ref{proposition: 4.18} \\ \hline
$4.18$ & $4A_{1}$ & $2A_{1}$ & $4$ & $\checkmark$ & Corollary \ref{cor: number of singular points not congruent to one mod q then have smooth point}/Prop \ref{prop: toric del Pezzo surfaces} \\ \hline
$4.19$ & $4A_{1}$ & $4A_{1}$ & $4$ & $\checkmark$ & Prop \ref{prop:4.19}/Prop \ref{prop: toric del Pezzo surfaces} \\ \hline
$4.20$ & $A_{4}$ & $A_{4}$ & $3$ & $\checkmark$ & Prop \ref{prop: 4.9,4.11,4.20,4.24} \\ \hline
$4.21$ & $D_{4}$ & $D_{4}$ & $2$ & $\checkmark$ & Prop \ref{proposition: 4.21}\\ \hline
$4.22$ & $2A_{1}+A_{3}$ & $A_{3}$ & $2$ & $\checkmark$ & Prop \ref{prop:4.22,4.23} \\ \hline
$4.23$ & $2A_{1}+A_{3}$ & $2A_{1}+A_{3}$ & $2$ & $\checkmark$ & Prop \ref{prop:4.22,4.23} \\ \hline
$4.24$ & $D_{5}$ & $D_{5}$ & $1$ & $\checkmark$ & Prop \ref{prop: 4.9,4.11,4.20,4.24}\\ \hline
\end{longtable}

\subsection{Odd characteristic}\label{subsec: dP4 odd char}
Throughout Subsection \ref{subsec: dP4 odd char} we shall assume $q$ is coprime to 2 i.e.\ $k$ is of odd characteristic.
\begin{lemma}[{\cite[Table 1]{F08}}]\label{lem: Point count on quadric surfaces in P3}
Let $Q\subset \mathbb{P}^{3}_{k}$ a quadric surface. Then the number of rational points on $Q$ is as follows:
\[
\begin{tabular}{|l|l|l|}\hline
Rank & Type & $Q(\mathbb{F}_{q})$ \\\hline\hline
1 & repeated plane & $q^2+q+1$  \\\hline
2 & pair of distinct planes & $2q^2+q+1$\\\hline
2 & line & $q+1$\\\hline
3 & quadric cone & $q^2+q+1$\\\hline
4 & hyperbolic quadric & $(q+1)^2$\\\hline
4 & elliptic quadric & $q^2+1$\\\hline
\end{tabular}
\]
\end{lemma}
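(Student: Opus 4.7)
The plan is to classify quadric surfaces $Q\subset \mathbb{P}^{3}_{k}$ up to $\mathbb{F}_{q}$-linear change of coordinates by the rank of the associated symmetric Gram matrix and then count points in each case. Since $k$ has odd characteristic, any quadratic form in four variables can be diagonalised, so the isomorphism class of $Q$ over $\mathbb{F}_{q}$ is determined by the rank together with the square-class of the discriminant of the non-degenerate part. I would proceed case by case.

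For rank $1$, the form is $x_{0}^{2}$ up to change of variables, whose reduced support is $\{x_{0}=0\}\cong \mathbb{P}^{2}_{k}$, giving $q^{2}+q+1$ points. For rank $2$, the form is $x_{0}^{2}+ax_{1}^{2}$. If $-a$ is a square $b^{2}$ in $\mathbb{F}_{q}^{*}$, then $Q$ splits as the union of the two distinct planes $\{x_{0}\pm bx_{1}=0\}$, meeting along the line $\{x_{0}=x_{1}=0\}$; inclusion–exclusion gives $2(q^{2}+q+1)-(q+1)=2q^{2}+q+1$. If $-a$ is a non-square, then the $\mathbb{F}_{q}$-locus reduces to $\{x_{0}=x_{1}=0\}\cong \mathbb{P}^{1}_{k}$, with $q+1$ points. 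For rank $3$, the form is non-degenerate in three variables with the fourth variable redundant, so $Q(\mathbb{F}_{q})$ is the cone with some vertex $v$ over a smooth plane conic $C$. Since every smooth conic over a finite field has a rational point, $C\cong\mathbb{P}^{1}$ and $\#C(\mathbb{F}_{q})=q+1$; summing over rulings through $v$ gives $1+(q+1)q=q^{2}+q+1$.

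For rank $4$, the form is non-degenerate and the discriminant in $\mathbb{F}_{q}^{*}/(\mathbb{F}_{q}^{*})^{2}$ separates two $\mathbb{F}_{q}$-classes. In the hyperbolic case the form contains two hyperbolic planes, so $Q$ is isomorphic to $\mathbb{P}^{1}\times \mathbb{P}^{1}$ via the Segre embedding and has $(q+1)^{2}$ rational points. In the elliptic case the form can be written as $N_{\mathbb{F}_{q^{2}}/\mathbb{F}_{q}}(x_{0}+\alpha x_{1})-cN_{\mathbb{F}_{q^{2}}/\mathbb{F}_{q}}(x_{2}+\alpha x_{3})$ for a suitable $\alpha$ generating $\mathbb{F}_{q^{2}}/\mathbb{F}_{q}$ and a suitable $c\in \mathbb{F}_{q}^{*}$, so the $\mathbb{F}_{q}$-points are counted by the fibres of the norm map; alternatively one notes that $\bar{Q}\cong \mathbb{P}^{1}\times \mathbb{P}^{1}$ with Frobenius swapping the two rulings, so the Weil formula (Theorem \ref{theorem: Weil result}) gives $\#Q(\mathbb{F}_{q})=q^{2}+q\cdot \mathrm{tr}(\phi^{*})+1$ with $\mathrm{tr}(\phi^{*})=0$ since the swap has trace zero on $\mathrm{Pic}(\bar{Q})\cong \mathbb{Z}^{2}$, yielding $q^{2}+1$.

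The routine obstacle is keeping track of the $\mathbb{F}_{q}$-isomorphism classes in rank $4$; the cleanest conceptual justification that the two rank-$4$ classes are distinguished exactly by the discriminant and produce exactly the two announced point counts is the Frobenius/ruling argument above, which is why I would prefer it to a direct substitution count. Every other case reduces to elementary inclusion–exclusion on linear subspaces of $\mathbb{P}^{3}_{k}$ and the count of a smooth conic, both of which are immediate.
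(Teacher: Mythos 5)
Your argument is correct in every case. Note, however, that the paper itself offers no proof of this lemma: it is stated as a citation of \cite[Table 1]{F08}, so there is no internal argument to compare yours against. What you have written is the standard self-contained derivation, and it checks out: the diagonalisation and the rank-plus-discriminant classification of quadratic forms are legitimate because the lemma sits inside the subsection where $q$ is assumed odd (a hypothesis you correctly flag and use); the rank $1$, $2$, and $3$ counts by inclusion--exclusion and the cone construction are right (including the observation that a smooth conic over $\mathbb{F}_q$ always has a point, so the rank-$3$ count is $1+q(q+1)=q^2+q+1$); and the rank-$4$ dichotomy via the Frobenius action on the two rulings of $\bar{Q}\cong\mathbb{P}^1\times\mathbb{P}^1$, combined with the Weil formula of Theorem \ref{theorem: Weil result}, cleanly yields $(q+1)^2$ versus $q^2+1$. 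The only point worth tightening if this were to replace the citation is the assertion that non-degenerate quaternary forms over $\mathbb{F}_q$ fall into exactly two classes distinguished by the discriminant; this is the standard classification of quadratic forms over finite fields and should itself carry a reference, but it is not a gap in substance.
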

\begin{proposition}\label{prop: dP4 odd char}
Let $X$ be a singular del Pezzo of degree 4 over $k$ with at least one singular rational point. Then $\#X(\mathbb{F}_{q})\geq q^2-2q+1$.
\end{proposition}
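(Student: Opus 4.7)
The plan is to use a pencil of hyperplane sections of $X$ through the singular rational point $p$, counting $\mathbb{F}_q$-points fibrewise. Each generic hyperplane section is a rational curve with a nodal or cuspidal singularity at $p$ and hence has at least $q$ rational points; summing over the pencil (correcting for the common base locus) gives the required lower bound.

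First I would embed $X \hookrightarrow \mathbb{P}^4$ anticanonically as a complete intersection of two quadrics, so that the singular rational point $p \in X(\mathbb{F}_q)$ is a double point of $X$. I would then run an averaging argument over the $(q^2+1)(q^2+q+1)$ $\mathbb{F}_q$-rational $2$-planes $\Pi \subset \mathbb{P}^4$ through $p$: each rational point of $X \setminus \{p\}$ lies on exactly $q^2+q+1$ such planes, so the mean number of additional rational intersection points of $\Pi \cap X$ over all $\Pi$ equals $(\#X(\mathbb{F}_q)-1)/(q^2+1)$. Combined with the congruence $\#X(\mathbb{F}_q) \equiv 1 \pmod q$ of Theorem \ref{theorem: Nathan Kaplan result}, either $\#X(\mathbb{F}_q) \geq q^2+q+1$ (in which case the conclusion is immediate, as $q^2+q+1 > (q-1)^2$), or there exists a 2-plane $\Pi$ through $p$ with $(\Pi \cap X)(\mathbb{F}_q) = \{p\}$.

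Fixing such a $\Pi$, consider the pencil $\{H_t\}_{t \in \mathbb{P}^1}$ of hyperplanes through $\Pi$ and the corresponding pencil of curves $C_t = X \cap H_t$. Each $C_t$ has degree $4$, arithmetic genus $1$, and a double point at $p$ (because $p$ is a double point of $X$); for generic $t$ it is irreducible rational with only a nodal or cuspidal singularity at $p$, and a direct normalisation count gives $\#C_t(\mathbb{F}_q) \geq q$. A base-locus decomposition then yields
\[
\#X(\mathbb{F}_q) \;=\; \sum_{t \in \mathbb{P}^1(\mathbb{F}_q)} \#C_t(\mathbb{F}_q) \;-\; q \cdot \#(\Pi \cap X)(\mathbb{F}_q) \;\geq\; q(q+1) - q \;=\; q^2,
\]
which exceeds the required $(q-1)^2$.

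The hard part will be ensuring that $\#C_t(\mathbb{F}_q) \geq q$ holds uniformly for all $t \in \mathbb{P}^1(\mathbb{F}_q)$, including degenerate fibres where $C_t$ becomes reducible. The only reducible fibres $C_t$ arise when $H_t$ contains a line of $\bar X$ through $p$; by imposing an additional genericity condition when choosing $\Pi$, one can ensure that rational $t \in \mathbb{P}^1(\mathbb{F}_q)$ with $C_t$ reducible correspond to rational lines of $X$ through $p$, and in that case $C_t$ decomposes as a rational line plus a rational cubic, for which $\#C_t(\mathbb{F}_q) \geq q$ follows by elementary point counting. Galois-conjugate line configurations force the corresponding reducibility parameter $t$ to lie in a $\operatorname{Gal}(\bar k/k)$-orbit of size $\geq 2$ and hence contribute no element of $\mathbb{P}^1(\mathbb{F}_q)$; a finer case analysis may nonetheless be required for singularity types in Table \ref{table: dp4} with many rational lines through $p$.
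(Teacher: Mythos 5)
Your averaging step and the base-locus identity $\#X(\mathbb{F}_q)=\sum_{t\in\mathbb{P}^1(\mathbb{F}_q)}\#C_t(\mathbb{F}_q)-q\cdot\#(\Pi\cap X)(\mathbb{F}_q)$ are both fine, but the argument has a genuine gap exactly where you flag "the hard part": the uniform bound $\#C_t(\mathbb{F}_q)\geq q$ for \emph{every} rational fibre is not established, and the sketch you give for the degenerate fibres rests on two incorrect claims. First, it is not true that the only reducible fibres are those containing a line of $\bar X$ through $p$: an anticanonical hyperplane section of a quartic del Pezzo can split as a union of two conics (this is how the conic bundle structures on $X$ arise), with no line component at all. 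Second, the claim that Galois-conjugate configurations force $t$ to lie in a Galois orbit of size $\geq 2$ is false: if the set of geometric components of $C_t$ is permuted by $\Gal(\bar k/k)$, the hyperplane $H_t$ they span is Galois-stable and hence $t\in\mathbb{P}^1(\mathbb{F}_q)$. For such a $t$ the rational points of $C_t$ are confined to the intersection locus of conjugate components, which can have far fewer than $q$ points, so the fibrewise bound can fail precisely at these $t$. Finally, the "additional genericity condition" on $\Pi$ cannot simply be imposed: $\Pi$ is already constrained by $(\Pi\cap X)(\mathbb{F}_q)=\{p\}$, and the binding case of the proposition is $q=3$, where there is no room to demand extra general position. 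Without a complete enumeration of the possible degenerate fibres (reducible, non-reduced, conjugate configurations) and a point count for each, the inequality is not proved.

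For comparison, the paper's proof avoids the pencil entirely: using the normal form of \cite[Prop 2.1]{CTSSD87} it writes $X$ so that the complement of the hyperplane section $x_1=0$ is isomorphic to the affine quadric $f(1,x_2,x_3,x_4)=0$ in $\mathbb{A}^3$ with $f$ of rank $\geq 3$ (this is projection from the singular rational point), and then reads off $\#X(\mathbb{F}_q)\geq \#\mathbb{V}(\mathbb{F}_q)-\#C(\mathbb{F}_q)\geq (q^2+1)-(2q+1)+1$ from the table of point counts on quadric surfaces (Lemma \ref{lem: Point count on quadric surfaces in P3}). That route replaces your fibrewise analysis of a pencil of genus-one curves by a single classification of quadrics in $\mathbb{P}^3$, and is what makes the bound $q^2-2q+1$ come out cleanly with no case analysis. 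If you want to keep your approach you would need to classify the degenerate members of the pencil $|{-K_X}-E|$-type systems through $p$ and bound their point counts individually; as written, the proof is incomplete.
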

\begin{proof}
By \cite[Prop 2.1]{CTSSD87}, we can write $X$ as \[
X = \begin{cases} 
x_{0}x_{1}-g(x_{1},x_{2},x_{3},x_{4})&=0\\
				f(x_{1},x_{2},x_{3},x_{4})&=0
	\end{cases} \ \ \subseteq \mathbb{P}^{4}
\] where $f$ and $g$ are quadratic forms, $f$ is of rank at least 3 and $X$ has a singular point at $(x_{0},x_{1},x_{2},x_{3},x_{4})=(1,0,0,0,0)$. Considering the affine open $U:=X\backslash \mathbb{V}(x_{1})$ we can define an isomorphism from $U$ to the affine scheme $V \subset \mathbb{A}^{3}$, defined by $f(1,x_{2},x_{3},x_{4})=0$ via \begin{align*}
U &\rightarrow V, &&(x_{0},x_{2},x_{3},x_{4})\mapsto (x_{2},x_{3},x_{4}),\\
V &\rightarrow U, &&(x_{2},x_{3},x_{4}) \mapsto (g(1,x_{2},x_{3},x_{4}),x_{2},x_{3},x_{4}).
\end{align*} 
Let $\mathbb{V}:=\mathbb{V}(f(x_{1},x_{2},x_{3},x_{4}))\subset \mathbb{P}^{3}$ be the compactification of $V$ inside $\mathbb{P}^{3}$ and denote by $H$ the hyperplane $H:=\mathbb{V}(x_{1})$. The hyperplane section $C:=\mathbb{V}\cap H$ is a (possibly singular) conic. One can deduce $\# X(k)=\#\mathbb{V}(k)-\#C(k)+\#\left(X\cap \mathbb{V}(x_{1})\right)(k)$. If all the singular points of $X$ lie on the plane $\mathbb{V}(x_{1})$ then $\#X(k)\geq q^2-2q+1$, or $\#X(\mathbb{F}_{q})\geq q^2-q+1$, otherwise.
\end{proof}
\begin{corollary}\label{cor: dP4 odd char not 4}
Let $X$ be a singular del Pezzo of degree 4 over $k$ where $X$ has $\delta>0$ singular rational points. If $\delta \neq 4$ then $X$ has a smooth point.
\end{corollary}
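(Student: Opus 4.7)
The plan is to reduce to a short case analysis on the value of $\delta$. From Table \ref{table: dp4}, a singular del Pezzo surface of degree 4 has at most four singular geometric points, hence at most four singular rational points, so the hypotheses $\delta>0$ and $\delta\neq 4$ restrict us to $\delta\in\{1,2,3\}$. Throughout we use that $q$ is odd, so $q\geq 3$.

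For $\delta\in\{2,3\}$ I would apply Corollary \ref{cor: number of singular points not congruent to one mod q then have smooth point} directly. Indeed, $\delta\equiv 1\pmod q$ would force $q$ to divide $\delta-1\in\{1,2\}$, which is impossible since $q\geq 3$ is odd. Hence $\delta\not\equiv 1\pmod q$ and the corollary yields a smooth rational point.

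The only remaining case is $\delta=1$, and here the odd-characteristic Proposition \ref{prop: dP4 odd char} does the work: it gives $\#X(\mathbb{F}_q)\geq q^2-2q+1=(q-1)^2$. For $q\geq 3$ this is at least $4$, and since at most $\delta=1$ of these $\mathbb{F}_q$-points is singular, at least $(q-1)^2-1\geq 3$ of them are smooth.

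Since the two main inputs (the congruence corollary and the affine-hyperplane point count) have already been established, the only real obstacle is the bookkeeping: making sure that each $\delta\in\{1,2,3\}$ is caught by exactly one of the two tools in odd characteristic. The sharpness of the hypothesis is already illustrated by the entry $4.17$ of the table, where $\delta=4$ genuinely fails over $\mathbb{F}_3$, explaining why the case $\delta=4$ must be excluded from the statement.
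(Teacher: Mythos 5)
Your argument is correct, but it is more roundabout than the paper's. The paper's proof uses Proposition \ref{prop: dP4 odd char} uniformly: since $q\geq 3$ in this subsection, $\#X(\mathbb{F}_q)\geq q^2-2q+1\geq 4$, so the number of smooth points is at least $4-\delta>0$ whenever $\delta\leq 3$ --- no case split and no appeal to the congruence corollary is needed. Your decomposition into $\delta\in\{2,3\}$ (handled by Corollary \ref{cor: number of singular points not congruent to one mod q then have smooth point}) and $\delta=1$ (handled by the point count) is valid, and it has the minor virtue that the $\delta=2$ case then works in any characteristic; but for the statement at hand it buys nothing over the one-line uniform bound. One genuine error in your closing remark: the exclusion of $\delta=4$ is \emph{not} sharp in the sense of there being a counterexample. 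Entry $4.17$ of Table \ref{table: dp4} has only one rational singular point ($\delta=1$), and it is marked with an x because Proposition \ref{proposition: 4.18} shows no such surface exists over a perfect field --- it is not a surface with $\delta=4$ failing over $\mathbb{F}_3$. The case $\delta=4$ is excluded from this corollary only because the bound $4-\delta>0$ degenerates there; it is then treated separately in Corollary \ref{cor: dP4 odd char is 4}, which shows a smooth point still exists.
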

\begin{proof}
Note that by the classification of singular del Pezzo surfaces of degree $4$ we have $\delta\leq 4$. By Proposition \ref{prop: dP4 odd char} and the fact $q\geq 3$ we have that number of smooth points on $X$ is at least $q^2-2q+1-\delta \geq 4-\delta$. Hence, we have a smooth point under the assumption $\delta \neq 4$.
\end{proof}
\begin{corollary}\label{cor: dP4 odd char is 4}
 Let $X$ be a singular del Pezzo of degree 4 over $k$ and suppose $X$ has $4$ singular rational points. Then $X$ has a smooth rational point.
\end{corollary}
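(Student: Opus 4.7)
My plan is to combine the classification in Table \ref{table: dp4} with the toric machinery from Section \ref{sec: toric}. First, I would inspect the third column of Table \ref{table: dp4}, which records the singular points defined over the ground field $k$. Scanning the second column, the only $\bar{k}$-singularity type for a singular del Pezzo of degree $4$ that contains four singular points in total is $4A_{1}$, so the hypothesis $\delta = 4$ forces $X$ to fall into one of the rows $4.16$ through $4.19$. Of these, only row $4.19$ has all four singular points defined over $k$, so $X$ must realise class $4.19$.

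Next, I would invoke Proposition \ref{prop: toric del Pezzo surfaces}(4), which identifies $4A_1$-surfaces of degree $4$ as toric in Derenthal's list. Thus $\bar{X}$ is toric over $\bar{k}$. An application of Proposition \ref{prop: toric over k bar then toric over k} then produces the desired smooth $k$-rational point: the toric structure descends from $\bar{k}$ to $k$ via Grothendieck's theorem on maximal tori, and the open torus orbit is a torsor under a torus over a finite field, hence trivial by Lang's theorem.

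A faster but non-uniform alternative is worth noting. For $q \neq 3$ one can apply Corollary \ref{cor: number of singular points not congruent to one mod q then have smooth point} directly, since $\delta = 4 \not\equiv 1 \pmod{q}$ in that case. This leaves only $q = 3$ (the sole odd prime satisfying $4 \equiv 1 \pmod{q}$), where the toric argument cleanly closes the gap. Note that the point-count bound $\#X(k) \geq q^2 - 2q + 1$ from Proposition \ref{prop: dP4 odd char} degenerates to $\#X(k) - \delta \geq 0$ at $q = 3$, so it is \emph{not} sufficient by itself; this is exactly why the toric descent is doing the real work in the boundary case.

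The only potential obstacle is the tabular claim that row $4.19$ is the unique row with $\delta = 4$, but this is a direct lookup in Table \ref{table: dp4} once the $\bar{k}$-classification of singular del Pezzo surfaces of degree $4$ is accepted. No subtle Galois-cohomological input is required beyond what is already packaged into Proposition \ref{prop: toric over k bar then toric over k}.
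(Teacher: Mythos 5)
Your proof is correct (granting the paper's own toolkit), but it takes a genuinely different route from the paper's. The paper stays inside the quadric geometry set up in Proposition \ref{prop: dP4 odd char}: writing $X$ as an intersection of two quadrics with a distinguished singular point and splitting into cases according to whether all four singular points lie on the hyperplane $\mathbb{V}(x_{1})$ (giving $\#X(k)\geq q^2-2q+4$) or some singular point lies off it (forcing $f$ to have rank $3$ and giving $\#X(k)\geq q^2-q+1$); either bound beats $\delta=4$ for odd $q\geq 3$. You instead read off from the classification that $\delta=4$ forces type $4A_{1}$ with all singular points rational (class $4.19$), then invoke the toric descent of Proposition \ref{prop: toric over k bar then toric over k} via Proposition \ref{prop: toric del Pezzo surfaces}(4), with the congruence shortcut of Corollary \ref{cor: number of singular points not congruent to one mod q then have smooth point} disposing of every $q\neq 3$. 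Your route is arguably cleaner and works in all characteristics (the corollary sits in the odd-characteristic subsection, so the paper only needs $q$ odd), but it leans on two external inputs the paper's proof avoids here: the full geometric classification of degree-$4$ types and Derenthal's identification of the $4A_{1}$ surface as toric. Your diagnosis that Proposition \ref{prop: dP4 odd char} alone degenerates exactly at $q=3$ is accurate and is precisely why the paper refines the count there. One thing worth making explicit: the paper's own Proposition \ref{prop:4.19} cites this corollary for $q=3$, so it is important that you route through Proposition \ref{prop: toric del Pezzo surfaces} rather than through the table's other citation for class $4.19$; as written, your argument avoids any circularity.
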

\begin{proof}
Keeping notation as in Proposition \ref{prop: dP4 odd char}, if all singular points lie on the hyperplane $\mathbb{V}(x_{1})$ then $\#X(k)\geq q^2-2q+4$, hence for $q\geq 3$ we always have a smooth point. If there exists a singular point lying away from $\mathbb{V}(x_{1})$ then $f$ must have rank 3, hence $\#X(k)\geq q^2-q+1$. As $q^2-q-3>0$ for $q\geq 3$ we have that $X$ must have a smooth rational point.
\end{proof}
\subsection{General characteristic}
\begin{lemma}\label{lemma: 4.1} Let $X$ be a singular del Pezzo of of type $4.1$ over $\mathbb{F}_{2}$. Denote by $X'$ the minimal desingularisation of $X$ then the trace of Frobenius, $\tr(\phi^{*})$ on $X'$ is not $-1$ or $-2$.
\end{lemma}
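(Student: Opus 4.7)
The plan is to combine Weil's point-counting formula (Theorem \ref{theorem: Weil result}) with explicit enumeration of Galois actions on the graph of negative curves via Algorithm \ref{algorithm: determine traces on singular del pezzo}. For type $4.1$, over $\bar{k}$ the surface $\bar{X}$ has a unique singular point of type $A_{1}$, which is automatically $\mathbb{F}_{2}$-rational, so its preimage under $\pi$ is a single $(-2)$-curve $E$ on $\bar{X}'$ whose class is Galois-invariant. By Lemma \ref{lemma: dan's lemma}, $E$ is defined over $\mathbb{F}_{2}$; since $E \cong \mathbb{P}^{1}_{\mathbb{F}_{2}}$, we have $\#E(\mathbb{F}_{2}) = 3$.

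Weil's theorem gives $\#X'(\mathbb{F}_{2}) = 5 + 2\tr(\phi^{*})$, so the value $\tr(\phi^{*}) = -2$ would force $\#X'(\mathbb{F}_{2}) = 1$, contradicting $\#E(\mathbb{F}_{2}) = 3$; this case is eliminated immediately. For $\tr(\phi^{*}) = -1$ one would have $\#X'(\mathbb{F}_{2}) = 3$, meaning every rational point of $X'$ lies on $E$. In particular, no $(-1)$-curve can be individually defined over $\mathbb{F}_{2}$: any such curve $L$ satisfies $L \cong \mathbb{P}^{1}_{\mathbb{F}_{2}}$ with $\#L(\mathbb{F}_{2}) = 3$, and $L \cdot E \leq 1$ by adjacency in the graph of negative curves, so $L$ would contribute at least two rational points of $X'$ away from $E$, giving $\#X'(\mathbb{F}_{2}) \geq 5$. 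Hence Frobenius would be forced to permute the $12$ $(-1)$-curves of $\bar{X}'$ without fixed points.

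It remains to apply Algorithm \ref{algorithm: determine traces on singular del pezzo}: write down the graph $\Gamma$ of negative curves on $\bar{X}'$ (one $(-2)$-curve and twelve $(-1)$-curves, with incidences determined by the $E_{5}$ root-system data of Proposition \ref{prop: picard group of wdP}), compute $\Aut(\Gamma)$, restrict to those automorphisms $g$ that fix the $(-2)$-curve and act without fixed points on the $(-1)$-curves, and verify $\tr(g \mid \Pic \bar{X}') \neq -1$ for every such $g$, using the identification $\psi$ of Proposition \ref{prop: pic is iso to M/K}. The main obstacle is this final computational step: $\Aut(\Gamma)$ can be sizable and the trace computation must be performed on the Picard lattice of rank $6$, so in practice one uses SageMath or Magma as suggested in the Algorithm. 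The geometric reductions above are what make the computer check clean, since they already cut the list of candidate automorphisms down substantially.
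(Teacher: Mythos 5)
Your proposal is correct and follows essentially the same route as the paper: both values are ruled out by combining a point count with the combinatorics of the graph of negative curves, and in the $\tr(\phi^{*})=-1$ case both arguments bottom out in the same finite check on $\Aut(\Gamma)$ (the paper phrases it as ``every order-$6$ automorphism fixes a $(-1)$-curve'', you phrase it contrapositively as ``no fixed-point-free automorphism has trace $-1$''). The only cosmetic difference is in the $\tr(\phi^{*})=-2$ case, where the paper derives the contradiction $\#X(\mathbb{F}_{2})=-1$ from Theorem \ref{theorem: Nathan Kaplan result}, while you get $\#X'(\mathbb{F}_{2})=1<\#E(\mathbb{F}_{2})$ from Theorem \ref{theorem: Weil result}.
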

\begin{proof}
Suppose $\tr(\phi^{*})=-2$, then as $X$ has an $A_{1}$ singularity we must have $\tr(\phi\mid_{\mathcal{R}})=1$, hence $\#X(\mathbb{F}_{2})=-1$ which is a contradiction. If $\tr(\phi^{*})=-1$, then the order of the action of Frobienus on $\bar{X}'$ must be $6$. Considering the graph of negative curves $\Gamma$ \cite[Prop 6.1, Diagram 1]{CT88} on $\bar{X'}$, we see that all elements of order 6 of $\Aut(\Gamma)$ fix a $(-1)$-curve on $\bar{X}'$, hence $\#X'(\mathbb{F}_{2})\geq 5$. However, if $\tr(\phi^{*})=-1$ then $\#X'(\mathbb{F}_{2})=3$ so we have a contradiction.
\end{proof}
\begin{proposition}\label{prop: 4.1} Let $X$ be a singular del Pezzo of type 4.1 over $k$, then $X$ has a smooth rational point.
\end{proposition}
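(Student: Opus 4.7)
The strategy splits along the parity of $q$. When $q$ is odd, a type 4.1 surface has $\delta = 1$ singular rational point, and since $1 \neq 4$, Corollary \ref{cor: dP4 odd char not 4} immediately produces a smooth rational point. It thus remains to treat $q = 2^n$.

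In characteristic 2, the plan is to apply Theorem \ref{theorem: Nathan Kaplan result}. The root sublattice $\mathcal{R}$ is generated by the unique $(-2)$-curve above the singular rational point; this curve is Galois-invariant, so $\tr(\phi|_{\mathcal{R}}) = 1$. Since $K_{X'}$ is Galois-fixed and $E_{5} = K_{X'}^{\perp}$, one has $\tr(\phi|_{E_{5}}) = \tr(\phi^{*}) - 1$, and therefore
\[
\#X(\mathbb{F}_{q}) = q^{2} - q + 1 + q\,\tr(\phi^{*}).
\]
Because $X$ already has one singular rational point and $\#X(\mathbb{F}_{q}) \equiv 1 \pmod{q}$, producing a smooth rational point reduces to showing $\#X(\mathbb{F}_{q}) \geq q + 1$, equivalently $\tr(\phi^{*}) \geq 2 - q$.

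For $q \geq 8$ this is automatic from $|\tr(\phi^{*})| \leq \rank \Pic \bar{X}' = 6$. For $q \in \{2, 4\}$, I would first observe that the $(-2)$-curve is a smooth projective genus-$0$ curve over a finite field and is therefore isomorphic to $\mathbb{P}^{1}_{k}$, contributing $q + 1$ rational points to $X'$; via Theorem \ref{theorem: Weil result} this forces $\tr(\phi^{*}) \geq 1 - q$. When $q = 2$ this gives $\tr(\phi^{*}) \geq -1$, and Lemma \ref{lemma: 4.1} eliminates the value $-1$, leaving $\tr(\phi^{*}) \geq 0$ and $\#X(\mathbb{F}_{2}) \geq 3$. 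When $q = 4$ the same curve bound yields $\tr(\phi^{*}) \geq -3$, and ruling out the borderline value $\tr(\phi^{*}) = -3$ requires running Algorithm \ref{algorithm: determine traces on singular del pezzo} on the graph of negative curves for type 4.1: one enumerates the elements of $\Aut(\Gamma)$ that fix the unique $(-2)$-curve, computes the trace of each on $\Pic \bar{X}'$, and verifies that $-3$ is never achieved.

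The main obstacle is the $q = 4$ case, where every trivial bound falls one unit short of what is needed and one must actually carry out the combinatorial enumeration via the algorithm. Lemma \ref{lemma: 4.1} already packages the analogous obstacle at $q = 2$.
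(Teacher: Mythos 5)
Your proposal is correct and follows essentially the same route as the paper: both rest on Kaplan's formula $\#X(\mathbb{F}_q)=q^2-q+1+q\tr(\phi^{*})$, the odd-characteristic point count (Corollary \ref{cor: dP4 odd char not 4}), Lemma \ref{lemma: 4.1} for $q=2$, and an enumeration of admissible Frobenius traces (which the paper records as $\tr(\phi^{*})\geq -2$, subsuming your exclusion of $-3$ at $q=4$). Your auxiliary observation that the Galois-invariant $(-2)$-curve is a copy of $\mathbb{P}^1_k$ and hence forces $\tr(\phi^{*})\geq 1-q$ is a pleasant shortcut but does not change the overall structure of the argument.
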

\begin{proof}
If we consider all possible traces for the action of Frobenius on $\bar{X}'$ we see that $\tr(\phi^{*})\geq -2$. This is done by simply enumerating all conjugacy classes of $W(D_{4})$. If $\tr(\phi^{*})=-2$ and $q\geq 4$ we have $\#X(k)\geq 5$, hence $X$ has a smooth rational point. The case where $q=3$ is dealt with in Corollary \ref{cor: dP4 odd char not 4}. Using Lemma \ref{lemma: 4.1} we see that if $q=2$ then $\tr(\phi^{*})\geq 0$, hence $\#X(k)\geq 3$ and we can deduce $X$ has a smooth rational point in this case also.
\end{proof}
\begin{proposition}\label{prop: 4.6}
Let $X$ be a singular del Pezzo of type $4.6$ over $k$. Then $X$ has a smooth rational point.
\end{proposition}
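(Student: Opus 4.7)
The plan is to combine Corollary \ref{cor: dP4 odd char not 4} for odd characteristic with Algorithm \ref{algorithm: determine traces on singular del pezzo} and Theorem \ref{theorem: Nathan Kaplan result} for the residual even characteristic cases, mirroring the strategy used in the proof of Proposition \ref{prop: 4.1}. Since $X$ has $\delta=1$ singular rational point and $\delta\neq 4$, Corollary \ref{cor: dP4 odd char not 4} immediately produces a smooth $k$-point whenever $q$ is odd, so only $q=2$ and $q=4$ remain.

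For these even characteristic cases I would first draw the graph of negative curves $\Gamma$ on $\bar{X}'$ from the relevant diagram in \cite[Prop 6.1]{CT88}: two $(-2)$-curves joined by an edge (the $A_{2}$ chain) together with eight $(-1)$-curves and their mutual intersections. I would then compute $\Aut(\Gamma)$. The obvious symmetry is the involution swapping the two $(-2)$-curves, and since the singular point of $X$ is defined over $k$ the Galois action on the two $(-2)$-curves is either trivial or this swap, forcing $\tr(\phi^{*}|_{\mathcal{R}})\in\{0,2\}$.

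Using the identification of $\Pic\bar{X}'$ with $M/F$ given by Proposition \ref{prop: pic is iso to M/K}, I would then run Algorithm \ref{algorithm: determine traces on singular del pezzo}: for each $g\in\Aut(\Gamma)$ compute $\tr(g|_{E_{5}})$ and apply Theorem \ref{theorem: Nathan Kaplan result} to evaluate
\[
\#X(k)=q^{2}+q+1+q\bigl(\tr(g|_{E_{5}})-\tr(g|_{\mathcal{R}})\bigr).
\]
Since Theorem \ref{theorem: Nathan Kaplan result} forces $\#X(k)\equiv 1\pmod{q}$ and $\delta=1$, producing a smooth rational point amounts to ruling out $\#X(k)=1$, i.e.\ showing that $t:=\tr(g|_{E_{5}})-\tr(g|_{\mathcal{R}})>-q-1$ for every $g$ that can arise as a Frobenius.

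The hard part will be $q=2$, where the required lower bound $t\geq -2$ is tightest. If some element of $\Aut(\Gamma)$ produces a value $t\leq -3$ a priori, I would argue as in Lemma \ref{lemma: 4.1}: compare the forced value of $\#X'(\mathbb{F}_{2})$ with the number of $\mathbb{F}_{2}$-points forced on the fixed negative curves of $g$, and either derive a numerical contradiction or exhibit a Galois-fixed $(-1)$-curve on $\bar{X}'$ disjoint from both $(-2)$-curves, at which point Lemma \ref{lemma: dan's lemma} and Proposition \ref{prop: point lying away from -2 curve} together descend a smooth $k$-point to $X$.
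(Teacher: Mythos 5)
Your proposal is correct and its decisive step is the same as the paper's: the paper simply runs Algorithm \ref{algorithm: determine traces on singular del pezzo} on the graph of negative curves for type $4.6$ and finds that the only possibilities are $\tr(\phi^{*})=0,2,4$ with $\tr(\phi^{*}\mid_{\mathcal{R}})=0,2,2$ respectively, giving $\#X(k)\geq q^{2}+1\geq 5$ uniformly in $q$, which leaves at least four smooth points since $\delta=1$. The only differences are that the paper does not split into odd and even characteristic (your appeal to Corollary \ref{cor: dP4 odd char not 4} for odd $q$ is valid but redundant), and your contingency plan in the style of Lemma \ref{lemma: 4.1} for a hypothetical $t\leq -3$ at $q=2$ never gets triggered, because the trace computation already yields $t\geq -1$ for every admissible Frobenius action.
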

\begin{proof}
Consider the graph of negative curves $\Gamma$, on $\bar{X}'$ \cite[Prop 6.1, Diagram 4]{CT88}. By running algorithm \ref{algorithm: determine traces on singular del pezzo} we see that $\tr(\phi^{*})=0,2$ or 4, respectively $\tr(\phi^{*}\mid_{\mathcal{R}})=0,2$ or 2. We can conclude that $\#X(\mathbb{F}_{q})\geq q^2+1\geq 5$ for $q\geq 2$, hence $X$ has a smooth rational point.
\end{proof}
\begin{proposition}\label{proposition: 4.8} Let $X$ be a singular del Pezzo surface of type $4.8$ over $k$. Then $X$ has a smooth rational point.
\end{proposition}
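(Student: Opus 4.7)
The plan is to split on the characteristic of $k$. In odd characteristic, since type $4.8$ has exactly one rational singular point, we have $\delta=1\neq 4$, and Corollary \ref{cor: dP4 odd char not 4} immediately produces a smooth $k$-rational point. So the real work is in characteristic $2$, i.e.\ for $q\in\{2,4,8,\dots\}$.

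In even characteristic, I would apply Algorithm \ref{algorithm: determine traces on singular del pezzo} to the graph of negative curves $\Gamma$ on $\bar{X}'$, read off from \cite[Prop 6.1]{CT88} in the $3A_{1}$ case. Because the induced Galois action preserves singularity types, and the unique rational singularity must correspond to a Frobenius-fixed $(-2)$-curve while the remaining two form a transposed pair, the permutation action on the three $(-2)$-curves has trace $1$, so $\tr(\phi^{*}\mid_{\mathcal{R}})=1$. I would then enumerate the elements $g\in\Aut(\Gamma)$ compatible with this combinatorial constraint, compute $\tr(\phi^{*})$ for each via Proposition \ref{prop: pic is iso to M/K}, and feed the result into Theorem \ref{theorem: Nathan Kaplan result}, giving
\[
\#X(k)=q^{2}+q\,\tr(\phi^{*})-q+1.
\]
For a smooth rational point to exist it suffices that $\#X(k)\geq 2$, which reduces to $\tr(\phi^{*})\geq 2-q$.

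For $q\geq 4$ the required inequality $\tr(\phi^{*})\geq 2-q\geq -2$ will follow from the bounded range of traces on the rank-$6$ lattice $\Pic\bar{X}'$ combined with the Frobenius-invariance of $K_{X'}$, and the enumeration should confirm it uniformly. The main obstacle, as in Lemma \ref{lemma: 4.1} for type $4.1$, is the case $q=2$: here the bound needed is $\tr(\phi^{*})\geq 0$, and it is conceivable that some admissible $g$ yields $\tr(\phi^{*})\in\{-1,-2\}$. If so, I would rule out those classes exactly as in Lemma \ref{lemma: 4.1} by imposing that $\#X(\mathbb{F}_{2})$ and $\#X'(\mathbb{F}_{2})$ are both nonnegative integers, and additionally that any order-$6$ Frobenius necessarily fixes a $(-1)$-curve on $\bar{X}'$ lying off the exceptional locus; this forces extra rational points on $X$ inconsistent with the candidate trace, giving a contradiction. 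Once the $q=2$ case is settled, combining with the $q\geq 4$ argument and the odd-characteristic case completes the proof.
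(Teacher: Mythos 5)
Your approach is essentially the paper's: run Algorithm \ref{algorithm: determine traces on singular del pezzo} on the graph of negative curves and feed the resulting traces into Theorem \ref{theorem: Nathan Kaplan result}. The paper's enumeration returns $\tr(\phi^{*})=0,2,4$ with $\tr(\phi^{*}\mid_{\mathcal{R}})=1,1,3$ respectively, so $\#X(k)\geq q^{2}-q+1>1$ uniformly in $q$; in particular the odd/even split, the $q=2$ contingency via a Lemma \ref{lemma: 4.1}-style argument, and the hedge that $\tr(\phi^{*})\in\{-1,-2\}$ might occur are all unnecessary, although you correctly set up the point-count formula and identify the bound $\tr(\phi^{*})\geq 0$ that the enumeration must (and does) deliver.
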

\begin{proof}
In this case one can run algorithm \ref{algorithm: determine traces on singular del pezzo} and see that $\tr(\phi^{*})=0,2,$ or 4 respectively $\tr(\phi^{*}\mid_{\mathcal{R}})=1,1$ or 3. Hence, $\#X(\mathbb{F}_{q})\geq q^2-q+1>1$ and $X$ has a smooth rational point.
\end{proof}
\begin{proposition}\label{prop: 4.9,4.11,4.20,4.24} Let $X$ be a singular del Pezzo surface of type $4.9,4.11,4.20$ or $4.24$ over $k$. Then $X$ has a smooth rational point.
\end{proposition}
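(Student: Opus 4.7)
The plan is to follow the template of the preceding propositions in this section: for each of the four singularity types in turn, apply Algorithm \ref{algorithm: determine traces on singular del pezzo} to compute the possible traces of Frobenius, and then use Theorem \ref{theorem: Nathan Kaplan result} (which, after splitting off the rational canonical class, can be rewritten as $\#X(\mathbb{F}_q) = q^2 + 1 + q(\tr(\phi^*) - \tr(\phi^*|_{\mathcal{R}}))$) to obtain a lower bound on $\#X(\mathbb{F}_q)$ that exceeds the number $\delta$ of rational singular points. For each type I will read off the graph of negative curves $\Gamma$ on $\bar{X}'$ from \cite{CT88}, enumerate the conjugacy classes of those elements of $\Aut(\Gamma)$ that preserve the Dynkin component of each rational singular point, and compute the pair of traces on $\Pic \bar{X}'$ and on the root sublattice $\mathcal{R}$ for each such class.

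For type $4.9$ ($3A_1$ with $\delta = 3$), Galois must fix each of the three $A_1$ root classes individually, so $\tr(\phi^*|_{\mathcal{R}}) = 3$; a routine enumeration of the residual Galois actions through Algorithm \ref{algorithm: determine traces on singular del pezzo} should yield $\tr(\phi^*) \geq 3$, giving $\#X(\mathbb{F}_q) \geq q^2 + 1 \geq 5 > 3$. For types $4.11$, $4.20$ and $4.24$ ($\delta = 1$) it suffices to show $\#X(\mathbb{F}_q) \geq 2$. In each case Galois acts on the corresponding $A_3$, $A_4$ or $D_5$ Dynkin diagram either trivially or by its unique non-trivial involution, which pins down $\tr(\phi^*|_{\mathcal{R}})$; pairing this with the compatible conjugacy classes on $\Pic \bar{X}'$ (constrained by the graph-preservation condition) will give the required bound.

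As a complementary and sometimes simpler argument, in type $4.24$ the graph of negative curves contains a unique $(-1)$-curve, whose divisor class is automatically Galois-invariant; by Lemma \ref{lemma: dan's lemma} it is then defined over $k$. This $(-1)$-curve is isomorphic to $\mathbb{P}^1_k$, has $q + 1 \geq 3$ rational points, and intersects only the $(-2)$-curves of the $D_5$ chain at finitely many points, so Proposition \ref{prop: point lying away from -2 curve} will produce a smooth rational point directly. An analogous fixed-line argument is available whenever my enumeration turns up a Galois-fixed $(-1)$-curve not entirely contained in the $(-2)$-locus.

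The main obstacle I expect lies in the small-$q$ regime, especially $q = 2$ for types $4.11$, $4.20$ and $4.24$: Corollary \ref{cor: number of singular points not congruent to one mod q then have smooth point} is inapplicable since $\delta = 1 \equiv 1 \pmod 2$, and the bound from Kaplan's formula becomes tight if the signed trace difference $\tr(\phi^*) - \tr(\phi^*|_{\mathcal{R}})$ is very negative, since the singular rational point itself may then exhaust the rational points on $X$. Handling these boundary configurations will require either carefully ruling out the problematic conjugacy classes in $\Aut(\Gamma)$ (using the fact that a valid Frobenius must preserve the rationality pattern of the singularities) or falling back on the fixed $(-1)$-curve argument above to exhibit an explicit smooth rational point.
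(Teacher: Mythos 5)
Your ``complementary'' argument is in fact the paper's entire proof: for each of the four types one inspects the graph of negative curves in \cite[Prop 6.1, Diagrams 7, 12, 15]{CT88}, observes that there is always a Galois-fixed $(-1)$-curve meeting at most two $(-2)$-curves, and applies Lemma \ref{lemma: dan's lemma} together with Proposition \ref{prop: point lying away from -2 curve} (the curve is a $\mathbb{P}^1_k$ with $q+1\geq 3$ rational points, of which at most two lie on the $(-2)$-locus). The paper runs this uniformly for all four types and never touches Kaplan's formula here, precisely because the geometric argument is insensitive to $q$. You, by contrast, commit to the fixed-curve argument only for type $4.24$ and make the trace enumeration your primary route.

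As written, that primary route has a genuine gap exactly where you anticipate one. For type $4.9$ ($3A_1$, $\delta=3$) the a priori information --- Frobenius fixes $K_{X'}$ and each of the three $A_1$ roots --- only forces $\tr(\phi^{*})\geq 2$ (the remaining rank-$2$ complement can a priori contribute $-2$), which gives $\#X(\mathbb{F}_q)\geq q^2-q+1$; at $q=2$ this is $3=\delta$ and does not produce a smooth point. So your claim that the enumeration ``should yield'' $\tr(\phi^{*})\geq 3$ is precisely the step that must be verified, and similarly the required bounds for $4.11$ and $4.20$ at $q=2$ are asserted rather than established. The fix is either to actually carry out the enumeration via Algorithm \ref{algorithm: determine traces on singular del pezzo}, or (simpler, and what the paper does) to check in Diagrams 7 and 12 of \cite[Prop 6.1]{CT88} that a Galois-fixed $(-1)$-curve meeting at most two $(-2)$-curves exists in types $4.9$, $4.11$ and $4.20$ as well, so that the argument you give for $4.24$ applies verbatim. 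Until one of these is done for those three types, the proof is incomplete.
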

\begin{proof}
Consider the graph of negative curves $\Gamma$, on $\bar{X}'$, as shown in \cite[Prop 6.1, Diagrams 7,12,15]{CT88}. Then one can see that there is a always a $(-1)$-curve fixed by Galois which intersect at most two $(-2)$-curves. Using Proposition \ref{prop: point lying away from -2 curve} we can conclude that $X$ has a smooth rational point.
\end{proof}
\begin{proposition}\label{prop:4.12} Let $X$ be a singular del Pezzo surface of type $4.13$ over $k$. Then $X$ has a smooth rational point.
\end{proposition}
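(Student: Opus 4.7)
The plan is to invoke Corollary \ref{cor: number of singular points not congruent to one mod q then have smooth point} directly. According to Table \ref{table: dp4}, a singular del Pezzo surface of type $4.13$ has geometric singularity configuration $A_{1}+A_{3}$, with both singularities already defined over $k$, giving $\delta=2$ singular $\mathbb{F}_q$-rational points on $X$.

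First I would justify why both singular points of $\bar X$ are necessarily $k$-rational. By the remark following Proposition \ref{prop: resolution of RDP surfaces and Dynkin diagrams}, the Galois action on $\bar{X}$ preserves the Dynkin type of each singular point. Since the singularities of $\bar X$ are of distinct types $A_1$ and $A_3$, no element of $\Gal(\bar k/k)$ can exchange them; each singular point forms its own Galois orbit and is hence defined over $k$. This gives $\delta=2$ singular rational points, matching the third column of Table \ref{table: dp4}.

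Next I would verify the numerical hypothesis of the corollary. For every prime power $q\geq 2$ we have $2\not\equiv 1\pmod q$, since $2\equiv 1\pmod q$ would force $q\mid 1$. Therefore Corollary \ref{cor: number of singular points not congruent to one mod q then have smooth point} applies, and combined with the lower bound $\#X(\mathbb{F}_q)\geq \delta$ together with the congruence $\#X(\mathbb{F}_q)\equiv 1\pmod q$ coming from Theorem \ref{theorem: Nathan Kaplan result}, we obtain $\#X(\mathbb{F}_q)\geq q+1$. Since $q+1>2=\delta$ for every $q\geq 2$, this yields at least one smooth $k$-rational point on $X$.

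The argument is short because it reduces entirely to a counting consequence of Theorem \ref{theorem: Nathan Kaplan result}; no analysis of the graph of negative curves or of automorphisms of the resolution is needed. The only place where something could go wrong is in the claim that $\delta=2$, but the distinctness of the $A_1$ and $A_3$ Dynkin types makes a merging of orbits impossible, so no subtle Galois-theoretic obstacle remains.
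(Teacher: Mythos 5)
Your argument is correct for the statement as literally written: for a surface of class $4.13$ the two geometric singular points have distinct Dynkin types $A_1$ and $A_3$, so neither can be moved by Galois, each forms a singleton orbit and (as $k$ is perfect) is defined over $k$, giving $\delta=2$; since $2\not\equiv 1\pmod q$ for every prime power $q$, Corollary \ref{cor: number of singular points not congruent to one mod q then have smooth point} applies. This is a genuinely different, and shorter, route than the paper's: the paper's proof of this proposition runs Algorithm \ref{algorithm: determine traces on singular del pezzo} and bounds $\#X(k)$ below via the possible traces of Frobenius on $\Pic\bar{X}'$ and on the root sublattice $\mathcal{R}$, whereas you avoid any analysis of the negative-curve graph. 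Indeed, your route is exactly the one Table \ref{table: dp4} itself assigns to class $4.13$.

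One caveat. The label of this proposition and the ``Proof'' column of Table \ref{table: dp4} indicate that it is meant to treat class $4.12$ (geometric type $[A_3]''$), with class $4.13$ already disposed of by the Corollary; the ``type $4.13$'' in the statement appears to be an off-by-one slip of the kind that also occurs in Propositions \ref{prop:4.14,4.15}, \ref{prop:4.19} and \ref{prop:4.22,4.23}. If the intended class is $4.12$, your argument does not apply: there is then a single rational singular point, so $\delta=1$ and $1\equiv 1\pmod q$ always, the Corollary gives nothing, and one genuinely needs the trace computation (or some other geometric input such as a Galois-fixed $(-1)$-curve off the $(-2)$-curves). So your proof settles the literal statement but not the case the proposition is evidently intended to cover.
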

\begin{proof}
In this case one can run algorithm \ref{algorithm: determine traces on singular del pezzo} and see that $\tr(\phi^{*})=2,2$ or 4 respectively $\tr(\phi^{*}\mid_{\mathcal{R}})=1,2$ or 1. Hence, $\#X(k)\geq q^2-q+1>1$ and $X$ has a smooth rational point.
\end{proof}
\begin{proposition}\label{prop:4.14,4.15} Let $X$ be a singular del Pezzo surface of type $4.15$ or $4.16$ over $k$. Then $X$ has a smooth rational point.
\end{proposition}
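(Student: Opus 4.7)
The plan is to recognise that both singular del Pezzo surfaces in question fall squarely within the scope of the toric classification encoded in Proposition \ref{prop: toric del Pezzo surfaces}. For type $4.15$ the geometric singularity type is $S = A_{2} + 2A_{1}$ and for type $4.16$ it is $S = 4A_{1}$, and both singularity types appear explicitly in item (4) of that proposition in the degree-$4$ case. It follows that $\bar{X}$ is toric in each case, so by Proposition \ref{prop: toric over k bar then toric over k} the surface $X$ is toric over $k$, and therefore admits a smooth $k$-rational point. The core argument is this single two-step invocation: ``toric over $\bar{k}$'' (from the Derenthal classification) plus ``toric descends to $k$ and the open orbit gives a smooth $k$-point'' (from Proposition \ref{prop: toric over k bar then toric over k}).

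I would also record two alternative routes that handle each type independently of the toric classification in most cases, since they arise naturally from the general machinery assembled earlier in the paper. For type $4.16$, Table \ref{table: dp4} shows that none of the four $A_{1}$ singularities is individually Galois-invariant (the entry for rational singular points is $\emptyset$), so Remark \ref{rem: singular del Pezzo with no singular point over ground field has a smooth point} immediately produces a smooth rational point without any toric input. For type $4.15$ the number of rational singularities is $\delta = 1 + 2 = 3$, and for every $q \geq 3$ we have $3 \not\equiv 1 \pmod{q}$; Corollary \ref{cor: number of singular points not congruent to one mod q then have smooth point} then directly yields the existence of a smooth $\mathbb{F}_{q}$-point. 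The only residual case left over by these elementary arguments is type $4.15$ with $q = 2$, where $3 \equiv 1 \pmod{2}$ and the corollary fails; here the toric route through Proposition \ref{prop: toric del Pezzo surfaces} becomes the essential ingredient.

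In terms of difficulty, the proof presents no substantial obstacle. The only conceptual step is matching the geometric singularity types of classes $4.15$ and $4.16$ to those enumerated in item (4) of Proposition \ref{prop: toric del Pezzo surfaces}, which is a direct comparison against Table \ref{table: dp4}. No graph of negative curves needs to be analysed, no explicit trace of Frobenius needs to be computed via Algorithm \ref{algorithm: determine traces on singular del pezzo}, and no case-by-case $q$-dependence needs to be unpacked once the toric argument is invoked uniformly. I would write the final proof as two short sentences, each deducing one of the two types from Proposition \ref{prop: toric del Pezzo surfaces}, optionally noting the direct appeals to Remark \ref{rem: singular del Pezzo with no singular point over ground field has a smooth point} and Corollary \ref{cor: number of singular points not congruent to one mod q then have smooth point} for completeness.
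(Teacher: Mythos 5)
Your proposal is correct, but it takes a genuinely different route from the paper. The paper works directly with the graph of negative curves from \cite[Prop 6.1, Diagram 10]{CT88}: in the class where Galois acts trivially on the graph it exhibits a $k$-rational $(-1)$-curve meeting only one $(-2)$-curve and applies Proposition \ref{prop: point lying away from -2 curve}, while in the class where Galois acts through the non-trivial element of $\Aut(\Gamma)\cong C_{2}$ it computes $\tr(\phi^{*})=0$ and $\tr(\phi^{*}\mid_{\mathcal{R}})=0$, so $\#X(k)=q^{2}+1$ exceeds the number of singular rational points. You instead note that the relevant geometric singularity types ($A_{2}+2A_{1}$ and $4A_{1}$ in degree $4$) appear in item (4) of Proposition \ref{prop: toric del Pezzo surfaces}, so the surfaces are geometrically toric and Proposition \ref{prop: toric over k bar then toric over k} yields a smooth point immediately; your fallback arguments via Remark \ref{rem: singular del Pezzo with no singular point over ground field has a smooth point} and Corollary \ref{cor: number of singular points not congruent to one mod q then have smooth point} are also sound, with the residual case $q=2$ correctly isolated. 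Your route is shorter and uniform in the Galois action, at the price of relying on Derenthal's toric classification; the paper's route is independent of that classification and produces an explicit point count. One caveat worth flagging: the numbering in the statement appears to be off by one relative to Table \ref{table: dp4}, which assigns this proposition to classes $4.14$ and $4.15$ (the two Galois forms of the single geometric type $A_{2}+2A_{1}$), and indeed the paper's proof treats a trivial and a non-trivial Galois action on the same graph, whereas class $4.16$ ($4A_{1}$ with no rational singular point) is already handled by the cited Remark. Since your toric argument depends only on the geometric singularity type, it covers $4.14$, $4.15$ and $4.16$ alike and is therefore unaffected by this ambiguity.
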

\begin{proof}
Consider the graph of negative curves $\Gamma$, on $\bar{X}'$, as shown in \cite[Diagram 10, Prop 6.1]{CT88}. In the case where $X$ is of type $4.16$ it is easy to see that the action of $\Gal(\bar{k}/k)$ on $\Gamma$ is trivial. Moreover, in this case there exists a $(-1)$-curve which is defined over $k$ and intersects one $(-2)$-curve, hence we can apply Proposition \ref{prop: point lying away from -2 curve} to deduce there is a smooth rational point on $X$. If we are in the case of $4.15$ then $\Aut(\Gamma)\cong C_{2}$, and as we are in case $4.15$ the Galois action on $\bar{X}'$ must be non-trivial. Then we must have $\tr(\phi^{*})=0$ and $\tr(\phi\mid_{\mathcal{R}})=0$ and $\#X(k)=q^2+1>3$ as $q\geq 2$.
\end{proof}
\begin{proposition}\label{proposition: 4.18} 
There are no del Pezzo surfaces of degree 4 of type 4.18 over a perfect field $K$.
\end{proposition}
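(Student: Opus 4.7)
The plan is to show that the combinatorics of the Galois action on the graph of negative curves of the putative minimal desingularization $\bar{X}'$ rules out having exactly one rational $A_{1}$ singularity among the four. By the classification in \cite[Prop 6.1]{CT88}, the graph of negative curves $\Gamma$ on $\bar{X}'$ consists of four pairwise disjoint $(-2)$-curves (one above each $A_{1}$ point) and four $(-1)$-curves, with the bipartite incidence graph a single $8$-cycle in which $(-1)$- and $(-2)$-curves alternate.

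I would label the four $(-2)$-curves $E_{1},\dotsc,E_{4}$ cyclically along this $8$-cycle. Since any Galois automorphism preserves self-intersection numbers, the action of $\Gal(\bar{K}/K)$ on $\Gamma$ cannot interchange $(-1)$- with $(-2)$-curves, hence factors through the subgroup of $\Aut(\Gamma)$ preserving the bipartition. This subgroup is the dihedral group $D_{4}$ of order $8$ acting on $\{E_{1},\dotsc,E_{4}\}$ as the symmetries of a square, and the image of Galois is a finite subgroup because $\Gamma$ is finite. Then Lemma \ref{lemma: dan's lemma} identifies the rational singular points of $X$ with those $E_{i}$ whose classes in $\Pic\bar{X}'$ are fixed by this image.

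It remains to enumerate subgroups of $D_{4}$ and count their common fixed vertices on the square: the identity fixes all four $E_{i}$; each of the two diagonal reflections fixes exactly two opposite vertices; and every other nontrivial element (the order-four rotation, its square, and the two edge-midpoint reflections) fixes none, so any subgroup containing such an element has empty fixed vertex set. Intersecting fixed sets over a subgroup, the common fixed vertex set always has cardinality $0$, $2$, or $4$, never $1$ or $3$. This forces the number of rational $A_{1}$ singular points of $X$ to lie in $\{0,2,4\}$, contradicting the defining feature of type $4.18$ (exactly one rational $A_{1}$) and completing the proof.

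The main potential obstacle is pinning down the exact combinatorial shape of $\Gamma$, specifically verifying that it is a single $8$-cycle rather than a disjoint union of two $4$-cycles (which would enlarge $\Aut(\Gamma)$ beyond $D_{4}$). This rests on the explicit diagram in \cite[Prop 6.1]{CT88}; I would also observe that even in the alternative two-$4$-cycle picture the same parity conclusion holds, since any cycle-swapping element fixes none of the $E_{i}$, while cycle-preserving elements act on $\{E_{1},\dotsc,E_{4}\}$ as two independent involutions whose common fixed set again has cardinality in $\{0,2,4\}$.
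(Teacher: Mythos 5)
Your argument is correct, and it gets to the contradiction by a genuinely different (and more robust) route than the paper. The paper also works with the graph of negative curves from \cite[Prop 6.1, Diagram 9]{CT88} --- the same alternating $8$-cycle you describe --- but its key assertion is that this graph has \emph{no} nontrivial automorphisms, so that the Galois action on $\Pic\bar{X}'$ must be trivial, all four singular points must be rational, and one then contradicts the requirement that the three non-rational singular points be permuted. As stated that assertion is not right: the abstract $8$-cycle has automorphism group of order $16$, and even the bipartition-preserving symmetries that extend to isometries of $\Pic\bar{X}'$ fixing $K_{X'}$ form a nontrivial group (for instance the rotation by two steps of the $8$-cycle extends to such an isometry, and the diagonal reflections account for the existing types with exactly two rational $A_{1}$ points in Table \ref{table: dp4}). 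Your proof sidesteps this entirely: you only need that the Galois image lies in the dihedral group of order $8$ acting on the four $(-2)$-vertices as the symmetries of a square, and that the common fixed-vertex set of any subgroup has cardinality $0$, $2$ or $4$, never $1$ or $3$. This buys a cleaner and stronger statement --- the number of rational singular points among the four $A_{1}$'s is always even --- which is consistent with the rows $4.16$--$4.19$ of the table, whereas the paper's "no automorphisms" claim would wrongly rule out the two-rational-point case as well. The one input you must still pin down is the shape of $\Gamma$ (a single $8$-cycle rather than two $4$-cycles); this follows from the intersection numbers of the classes in the paper's diagram, and your fallback parity argument covers the alternative configuration in any case.
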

\begin{proof}
Considering the graph of negative curves \cite[Prop 6.1, Diagram 9]{CT88}, on the minimal desingularisation $\bar{X}'$ as shown below.
\begin{center}
\begin{tikzpicture}
\draw (-2,0) -- (-1,1);
\draw (-2,0) -- (-2,-2);
\draw (-2,-2) -- (-1,-3);
\draw (-1,-3) -- (1,-3);
\draw (1,-3) -- (2,-2);
\draw (-1,1) -- (1,1);
\draw (1,1) -- (2,0);
\draw (2,0) -- (2,-2);

\node[draw, circle,fill=black,label=left:$l_{14}$]at (-2,0){};
\node[draw, circle,fill=white,label=$l_{1}-l_{2}$]at (-1,1){};
\node[draw, circle,fill=black,label =$l_{2}$]at (1,1){};
\node[draw, circle,fill=white,label=right:$l_{0}-l_{1}-l_{2}-l_{3}$]at (2,0){};
\node[draw, circle,fill=black,label=right:$l_{3}$]at (2,-2){};
\node[draw, circle,fill=white,label =below: $l_{0}-l_{3}-l_{4}-l_{5}$]at (1,-3){};
\node[draw, circle,fill=black,label =below: $l_{5}$]at (-1,-3){};
\node[draw, circle,fill=white,label = left:$l_{4}-l_{5}$]at (-2,-2){};
\end{tikzpicture}
\end{center} 
We see that for example if the $(-2)$-curve $l_{1}-l_{2}$ is defined over $K$ then the other negative curves must also be defined over $K$, as there are no automorphisms of the above graph i.e.\ the Galois action on $\bar{X}'$ must be trivial. Hence, the Galois action on $\bar{X}$ must also be trivial but this singularity type requires a non trivial Galois action on $\bar{X}$ as three singular points are permuted.
\end{proof}

\begin{proposition}\label{prop:4.19} Let $X$ be a singular del Pezzo surface of type $4.20$ over $k$. Then $X$ has a smooth rational point.
\end{proposition}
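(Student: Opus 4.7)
The plan is to verify the count of smooth rational points on $X$ using Theorem \ref{theorem: Nathan Kaplan result} via Algorithm \ref{algorithm: determine traces on singular del pezzo}, mirroring the pattern already established in this section. I would start from the graph of negative curves $\Gamma$ on $\bar{X}'$ recorded in \cite[Prop 6.1]{CT88} for the $4A_{1}$ case with four lines, and enumerate the conjugacy classes of $\Aut(\Gamma)$. Because all four $A_{1}$ singularities are rational over $k$, Galois must individually fix every $(-2)$-curve on $\bar{X}'$; this forces $\tr(\phi^{*}\mid_{\mathcal{R}}) = 4$ and restricts the admissible Frobenius actions to the subgroup of $\Aut(\Gamma)$ stabilising each $(-2)$-curve pointwise.

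Having isolated the admissible conjugacy classes, I would read off the finite list of possible values of $\tr(\phi^{*})$, feed each into Theorem \ref{theorem: Nathan Kaplan result} to obtain $\#X(k) = q^{2} + q + 1 + qt$ with $t = \tr(\phi^{*}) - 4$, and check that in every case $\#X(k)$ strictly exceeds the number $\delta = 4$ of singular rational points for every $q \ge 2$. Any surplus beyond $\delta$ is a smooth $k$-point, which is the conclusion.

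As an alternative shortcut, and the reason the table cites two proofs, one can bypass the algorithm entirely: the geometric type $4A_{1}$ at degree $4$ appears in Derenthal's list of toric singular del Pezzo surfaces in Proposition \ref{prop: toric del Pezzo surfaces}(4), so $\bar{X}$ is toric; then $X$ itself is toric over $k$ by Proposition \ref{prop: toric over k bar then toric over k}, which directly produces a smooth rational point via the Lang--Steinberg vanishing of $\HH^{1}_{\et}(k,T)$. The main obstacle in the algorithmic route is purely combinatorial bookkeeping: correctly pinning down which elements of $\Aut(\Gamma)$ fix all four $(-2)$-curves and then showing these all yield traces high enough that $qt + q + 1 > 3$; once this finite enumeration is carried out (as per the SageMath implementation of Algorithm \ref{algorithm: determine traces on singular del pezzo}), the conclusion is immediate.
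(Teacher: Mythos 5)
Your proposal is correct, but it takes a genuinely different route from the paper's own proof. (Note first that although the statement reads ``type $4.20$'', the table and the paper's argument make clear the intended class is $4.19$, i.e.\ geometric type $4A_{1}$ with all four singular points rational; you identified the right surface.) The paper argues purely by point-count congruences: with $\delta=4$ singular rational points, $4\not\equiv 1 \bmod q$ unless $q=3$, so Corollary \ref{cor: number of singular points not congruent to one mod q then have smooth point} settles every $q\neq 3$, and the residual case $q=3$ is handled by the explicit quadric-intersection estimate of Corollary \ref{cor: dP4 odd char is 4}. Your primary route instead pins down the Galois action on $\Pic\bar{X}'$: since each of the four $A_{1}$ points is rational, Frobenius fixes each of the four $(-2)$-classes, and in the graph of negative curves (an octagon alternating four $(-1)$- and four $(-2)$-curves) the only colour-preserving automorphism fixing all four $(-2)$-vertices individually is the identity; hence the action on $\Pic\bar{X}'$ is trivial, $t=6-4=2$, and $\#X(k)=q^{2}+3q+1>4=\delta$ for every $q\geq 2$. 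This gives a complete, uniform argument that avoids the odd-characteristic detour entirely; the one thing you should actually do, rather than defer to ``combinatorial bookkeeping'', is record that the enumeration collapses to the trivial action as just described. Your alternative toric route is also valid and is in fact the second proof the table already cites for this class, namely Proposition \ref{prop: toric del Pezzo surfaces}(4) combined with Proposition \ref{prop: toric over k bar then toric over k}, so there you coincide with the paper.
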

\begin{proof}
Using Corollary \ref{cor: number of singular points not congruent to one mod q then have smooth point} we see that for $q\neq 3$, there always exits a smooth rational point on $X$. For $q=3$, Corollary \ref{cor: dP4 odd char is 4} allows us to deduce that there is smooth rational point on $X$ in this case also.
\end{proof}
\begin{proposition}\label{proposition: 4.21} Let $X$ be a singular del Pezzo surface of type $4.21$ over $k$. Then $X$ has a smooth rational point.
\end{proposition}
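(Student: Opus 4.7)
The argument naturally splits by the parity of $q$. When $q$ is odd, $X$ has exactly one singular rational point (the unique $D_{4}$ point), so with $\delta = 1 \neq 4$ Corollary \ref{cor: dP4 odd char not 4} immediately produces a smooth rational point on $X$.

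For $q$ even, I plan to apply Algorithm \ref{algorithm: determine traces on singular del pezzo} to the graph $\Gamma$ of negative curves on $\bar{X}'$ as described in \cite[Prop 6.1]{CT88}. The graph consists of four $(-2)$-curves arranged in the $D_{4}$ Dynkin diagram together with the two $(-1)$-curves, and $\Aut(\Gamma)$ sits inside $S_{3}$ permuting the three outer leaves of $D_{4}$ (the central node of the $D_{4}$ together with the two $(-1)$-curves being distinguished by their valences and self-intersections). Enumerating the conjugacy classes of $\Aut(\Gamma)$ and running the algorithm produces the candidate pairs $(\tr(\phi^{*}), \tr(\phi^{*}|_{\mathcal{R}}))$; plugging these into Theorem \ref{theorem: Nathan Kaplan result} should show that $\#X(\mathbb{F}_{q}) \geq q^{2} + 1$ in every case, which strictly exceeds the lone singular rational point and therefore forces a smooth $\mathbb{F}_{q}$-point to exist.

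The main obstacle lies in the case $q = 2$, where the numerical margin in Kaplan's formula is tightest, i.e.\ one needs $\tr(\phi^{*}) - \tr(\phi^{*}|_{\mathcal{R}}) \geq -1$ for every admissible $g \in \Aut(\Gamma)$. If some Galois action were to fall outside this range, the fallback is geometric: locate a $(-1)$-curve on $\bar{X}'$ whose divisor class is Galois-fixed under every automorphism in $\Aut(\Gamma)$ and which meets at most $q-1$ of the $(-2)$-curves at $\mathbb{F}_{q}$-rational points. Then Lemma \ref{lemma: dan's lemma} descends the curve to $k$, and Proposition \ref{prop: point lying away from -2 curve} delivers the required smooth rational point. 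This is the same strategy that succeeded for the structurally related $A_{3}$, $A_{4}$ and $D_{5}$ cases in Proposition \ref{prop: 4.9,4.11,4.20,4.24}, so I expect it to apply here as well if needed.
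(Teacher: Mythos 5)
Your plan is essentially the paper's proof: the paper also enumerates the Galois actions through $\Aut(\Gamma)$ (which it identifies as $C_{2}$, consistent with your ``subgroup of $S_{3}$'' description) and applies Theorem \ref{theorem: Nathan Kaplan result}, finding $\#X(k)=q^{2}+2q+1$ for the trivial action and $\#X(k)=q^{2}+1$ for the nontrivial one, so your anticipated bound $\#X(\mathbb{F}_{q})\geq q^{2}+1$ is exactly what the computation yields and the geometric fallback is not needed. Your odd-$q$ shortcut via Corollary \ref{cor: dP4 odd char not 4} is valid but redundant, since the trace argument already covers all $q$.
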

\begin{proof}
One can consider the graph of negative curves, $\Gamma$ on $\bar{X}'$, as shown in \cite[Prop 6.1, Diagram 13]{CT88}. Here we can see that $\Aut(\Gamma)\cong C_{2}$. Denote by $\sigma\in \Aut(\Gamma)$, the non-trivial element. If $\Gal(\bar{k}/k)$ acts trivially on $\Gamma$, then $\#X'(k)=q^2+6q+1$, and $\#X(k)\geq q^2+2q+1$, hence $X$ has a smooth point. If the action of $\Gal(\bar{k}/k)$ on $\Gamma$, factors through the subgroup generated by $\sigma$, then $\tr(\phi^{*})=2$ and $\tr(\phi\mid_{\mathcal{R}})=2$. Hence, $\#X(k)=q^2+1> 2$, as $q\geq 2$.
\end{proof}
\begin{proposition}\label{prop:4.22,4.23} Let $X$ be a singular del Pezzo surface of type $4.22$ or $4.23$ over $k$. Then $X$ has a smooth rational point.
\end{proposition}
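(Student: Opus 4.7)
The plan is to split by characteristic and mirror the strategy used for the other degree 4 cases with a rational singular point. When $q$ is odd, both singularity types have at most three rational singular points ($\delta = 1$ for type $4.22$ and $\delta = 3$ for type $4.23$, both strictly less than $4$), so Corollary \ref{cor: dP4 odd char not 4} applies directly and furnishes a smooth $\mathbb{F}_q$-point with no further work.

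This leaves the single remaining case $q = 2$, which I would handle via Algorithm \ref{algorithm: determine traces on singular del pezzo}. First I would recall the graph of negative curves $\Gamma$ on $\bar{X}'$ from the relevant diagram of \cite[Prop 6.1]{CT88}: it consists of two $(-1)$-curves together with the five $(-2)$-curves making up the two $A_1$-nodes and the $A_3$-chain. I would compute $\Aut(\Gamma)$ and then restrict to those conjugacy classes compatible with the prescribed Galois action on the singular locus: for type $4.22$ the Frobenius must transpose the two $A_1$-points while fixing the $A_3$-point setwise (acting as either the identity or the reflection on the $A_3$-chain), whereas for type $4.23$ it must fix every singular point (so on the $A_3$-chain it can only act as the identity or the end-point flip, and on each $A_1$ as the identity). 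For each surviving conjugacy class I would read off $(\tr(\phi^{*}), \tr(\phi^{*}\mid_{\mathcal{R}}))$ by means of Proposition \ref{prop: pic is iso to M/K} and then apply Theorem \ref{theorem: Nathan Kaplan result} to compute $\#X(\mathbb{F}_2) = 7 + 2\bigl(\tr(\phi^{*}) - \tr(\phi^{*}\mid_{\mathcal{R}})\bigr)$.

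The main obstacle will then be a finite verification: in each admissible class one needs $\#X(\mathbb{F}_2) > \delta$, i.e.\ $\#X(\mathbb{F}_2) \geq 2$ for type $4.22$ and $\#X(\mathbb{F}_2) \geq 4$ for type $4.23$, which amounts to showing that the trace difference is at least $-2$ and $-1$ respectively. I expect this bound to hold in every compatible conjugacy class by the same pattern that appeared in Propositions \ref{prop: 4.1}, \ref{proposition: 4.8}, and \ref{prop:4.12}, because the admissible actions are very restricted by the requirement that all three singular points (or, in type $4.22$, the $A_3$-point) be $k$-rational. Combining the odd and even characteristic analyses yields a smooth rational point on $X$ in both cases.
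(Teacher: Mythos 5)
Your odd-characteristic reduction via Corollary \ref{cor: dP4 odd char not 4} is a genuinely different route from the paper's, and it is valid: type $4.22$ has $\delta=1$ rational singular point and type $4.23$ has $\delta=3$, both nonzero and $\neq 4$, so the quadric-intersection point count disposes of every odd $q$ at once. The paper instead argues uniformly in the characteristic straight from \cite[Prop 6.1, Diagram 14]{CT88}: for type $4.23$ the Galois action on $\Gamma$ is forced to be trivial and there is a rational $(-1)$-curve meeting only one $(-2)$-curve, so Proposition \ref{prop: point lying away from -2 curve} gives a smooth point with no counting at all; for type $4.22$ the action is the unique non-trivial element of $\Aut(\Gamma)\cong C_{2}$, and the traces come out to $\tr(\phi^{*})=2$ and $\tr(\phi^{*}\mid_{\mathcal{R}})=1$, whence $\#X(k)\geq q^{2}+q+1>\delta=1$. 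Your split buys a cheap treatment of odd $q$; the paper's buys a single argument with no case division on the characteristic.

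Two points in your even-characteristic step need repair. First, ``the single remaining case $q=2$'' is not right: Corollary \ref{cor: dP4 odd char not 4} only covers odd characteristic, so $q=4,8,16,\dotsc$ are also outstanding. This is harmless in substance --- the trace difference $t$ depends only on the conjugacy class, and $\#X(\mathbb{F}_{q})=q^{2}+q+1+qt$ only improves as $q$ grows --- but as written your proof omits those fields. Second, you never actually carry out the finite verification; you only ``expect'' the bound to hold. Since $\Aut(\Gamma)\cong C_{2}$ the check is short: the trivial class gives $t=0$, and the non-trivial class (which simultaneously swaps the two $A_{1}$-nodes and flips the $A_{3}$-chain, consistent with the paper's values above) also gives $t=0$, so $\#X(\mathbb{F}_{q})=q^{2}+q+1\geq 7>\delta$ in every admissible case. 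Note also that your formula $\#X(\mathbb{F}_{2})=7+2\bigl(\tr(\phi^{*})-\tr(\phi^{*}\mid_{\mathcal{R}})\bigr)$ uses $\tr(\phi^{*})$, the trace on all of $\Pic\bar{X}'$, where Theorem \ref{theorem: Nathan Kaplan result} wants the trace on $E_{N}$; these differ by the contribution of $K_{X'}$, and although the slack here absorbs the off-by-one, it should be corrected. With these repairs your argument is complete.
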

\begin{proof}
Denote by Consider the graph $\Gamma$ of negative curves on the minimal desingularisation $X'$ of $X$ as shown in \cite[Diagram 14, Prop 6.1]{CT88}. In the case where $X$ is of type $4.24$ it is easy to see that the action of $\Gal(\bar{k}/k)$ on $\Gamma$ is trivial. Moreover, in this case there exists a $(-1)$-curve which is defined over $k$ and intersects one $(-2)$-curve, hence we can apply Proposition \ref{prop: point lying away from -2 curve} to deduce there is a smooth rational point on $X$. If we are in the case of $4.23$ then $\Aut(\Gamma)\cong C_{2}$, and as we are in case $4.23$ the Galois action on $\bar{X}'$ must be non-trivial. Then we must have $\tr(\phi^{*})=2$ and $\tr(\phi\mid_{\mathcal{R}})=1$, hence $\#X(k)=q^2+q+1>1$ as $q\geq 2$.
\end{proof}
\begin{corollary}\label{cor: degree 4}
Every singular del Pezzo surface of degree 4 over a finite field has a smooth rational point.
\end{corollary}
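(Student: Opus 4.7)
The plan is to simply assemble the case-by-case verification that has been carried out in this section. Corollary~\ref{cor: degree 4} is a summary statement, and its proof reduces to checking that every row of Table~\ref{table: dp4} has been settled by one of the preceding results.

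First I would invoke the classification of singular del Pezzo surfaces of degree~$4$ (over $\bar{k}$) together with the possible Galois actions on the configuration of singular points, which enumerates the classes $4.1$ through $4.24$ appearing in Table~\ref{table: dp4}. For each class, the appropriate cell in the final column of the table already points to the result that resolves it: the classes with no singular rational point ($4.2$, $4.4$, $4.7$, $4.16$) follow from Remark~\ref{rem: singular del Pezzo with no singular point over ground field has a smooth point}; the classes whose number of singular rational points is not $\equiv 1 \pmod q$ ($4.3$, $4.5$, $4.10$, $4.13$, $4.18$) follow from Corollary~\ref{cor: number of singular points not congruent to one mod q then have smooth point}; and the class $4.17$ does not exist over a perfect field by Proposition~\ref{proposition: 4.18}. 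The remaining classes are handled individually by Propositions~\ref{prop: 4.1}, \ref{prop: 4.6}, \ref{proposition: 4.8}, \ref{prop: 4.9,4.11,4.20,4.24}, \ref{prop:4.12}, \ref{prop:4.14,4.15}, \ref{prop:4.19}, \ref{proposition: 4.21}, and \ref{prop:4.22,4.23}.

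There is no main obstacle: the work has all been done above. The one point that merits a sentence of comment is that the table already splits the classes according to Dynkin type over $\bar{k}$, the Galois-fixed subdiagram, and the number of $(-1)$-curves, which by the classification uniquely determines the class (as noted in the \emph{Key for tables}); so the enumeration really is complete. I would therefore write the proof as a single sentence stating that the result follows by inspection of Table~\ref{table: dp4}, citing the listed proofs.
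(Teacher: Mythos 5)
Your proposal is correct and matches the paper exactly: Corollary~\ref{cor: degree 4} is stated as a summary of Table~\ref{table: dp4}, with each class disposed of by the remark, corollary, or proposition cited in its final column, which is precisely the assembly you describe. Your reading of the table (including that class $4.17$ is the one shown not to exist, despite the off-by-one in the statement of Proposition~\ref{proposition: 4.18}) is the intended one.
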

\section{Del Pezzo surfaces of degree 3}
\begin{theorem}[{\cite[Thm 2]{K02}}]\label{thm: Kollar} Let $K$ be a perfect field and $X\subset \mathbb{P}^{n+1}$ an irreducible cubic hypersurface of dimension $\geq 2$ over $K$ which is not a cone over a $(n-1)$-dimensional cubic. Then $X$ has a $K$-point if and only if $X$ has a smooth $K$-point.
\end{theorem}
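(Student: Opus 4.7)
The plan is to reduce immediately to the case where $X$ has a singular $K$-point $p$ and then study the linear projection from $p$. If the given $K$-point is smooth we are done, so suppose $p \in X(K)$ is singular. Since $X$ is an irreducible cubic hypersurface, the multiplicity $m_p(X)$ is either $2$ or $3$. If $m_p(X) = 3$, the defining equation expanded at $p$ is a nonzero cubic form in the local coordinates only, which forces $X$ to be the cone with vertex $p$ over a cubic hypersurface of dimension $n-1$, contradicting our hypothesis. Therefore $m_p(X) = 2$.

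Next I would analyse the projection $\pi \colon X \dashrightarrow \mathbb{P}^n$ away from $p$. A general $K$-line $L$ through $p$ meets $X$ with multiplicity $2$ at $p$ and at exactly one residual point $q(L) \in X$, giving a rational inverse $\sigma \colon \mathbb{P}^n \dashrightarrow X$ defined over $K$. In particular $X$ is $K$-birational to $\mathbb{P}^n$, hence $K$-rational. Over any infinite field this already yields a Zariski dense set of smooth $K$-points, so the remaining difficulty is concentrated in the finite field case.

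The main obstacle is to show that over a finite $K$ one can find a $K$-line $L$ through $p$ whose residual intersection point $q(L)$ is both well-defined and a smooth point of $X$. The $K$-lines through $p$ are parametrised by $\mathbb{P}^n_K$, and the bad loci form proper subvarieties: lines contained in $X$ (governed by the tangent cone and the residual cubic term), lines passing through another singular point, and lines along which the residual $q(L)$ again happens to be singular. I would bound each of these by degree, then apply Lang--Weil estimates, together with the explicit structure of the quadric tangent cone $T_pX$ and the residual cubic in the normal directions, to count enough good $K$-rational lines. A delicate subcase occurs when the tangent cone degenerates (for instance to a pair of hyperplanes or a double hyperplane); here one must exploit the irreducibility of $X$ and the non-cone hypothesis to keep the residual cubic form genuinely three-dimensional, so that its smooth $K$-locus is non-empty by a direct Chevalley--Warning or Lang--Weil estimate.
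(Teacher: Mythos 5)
This statement is not proved in the paper at all: it is quoted verbatim from Koll\'ar \cite[Thm 2]{K02} and used as a black box to deduce Corollary \ref{cor: dp3}, so the only meaningful comparison is with Koll\'ar's own argument. Your opening moves are correct and do match the standard (and Koll\'ar's) starting point: the reduction to a singular $K$-point $p$, the observation that $m_p(X)=3$ would make $X$ a cone with vertex $p$ over an $(n-1)$-dimensional cubic (excluded by hypothesis), and the projection from the resulting double point, writing $f=x_0Q(x')+C(x')$ so that the residual intersection of the line in direction $v$ is $(-C(v):Q(v)v)$.

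The genuine gap is in the finite field case, which is the only case that matters for this paper. You propose to find a good direction $v\in\mathbb{P}^n(K)$ by bounding the bad loci by degree and applying Lang--Weil. Lang--Weil gives $\#U(\mathbb{F}_q)=q^n+O(q^{n-1/2})$ with an implied constant depending on the degrees involved, so it produces a point only for $q$ above an unspecified bound; it says nothing for $q=2,3,4$, which are precisely the fields the paper needs (and where, for degree $2$ del Pezzo surfaces, Theorem \ref{thm: main thm 2} shows the analogous statement actually fails, so small fields are not a removable technicality). Concretely, for $n=2$ the bad directions lie on a plane curve of small degree, but already a cubic curve over $\mathbb{F}_2$ can contain all $7$ points of $\mathbb{P}^2(\mathbb{F}_2)$, so no degree-plus-counting argument can close the case of small $q$. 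Koll\'ar's proof instead proceeds by a uniform case analysis (on the rank of the tangent cone $Q$, the geometry of $\{Q=C=0\}$, and the singular locus of $X$) that works for every field, with no counting estimate. Your sketch correctly identifies the degenerate tangent cone as a ``delicate subcase'' but offers only an intention to handle it; as written, the argument does not establish the theorem over small finite fields, and hence does not suffice for the application made of it in Corollary \ref{cor: dp3}.
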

\begin{corollary}\label{cor: dp3}
If $X$ is a singular del Pezzo of degree 3 over $k$, then $X$ always has a smooth rational point.
\end{corollary}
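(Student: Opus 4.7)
The plan is to invoke Kollár's theorem (Theorem \ref{thm: Kollar}), so the task reduces to realizing $X$ as an irreducible cubic hypersurface in $\mathbb{P}^{3}$, producing at least one rational point, and ruling out that $X$ is a cone. First I would recall that a singular del Pezzo surface of degree $3$ is precisely the anticanonical image of its minimal desingularisation $X'$, a weak del Pezzo of degree $3$; the linear system $|-K_{X}|$ embeds $X$ as an irreducible cubic surface in $\mathbb{P}^{3}_{k}$. Thus $X \subset \mathbb{P}^{3}_{k}$ is an irreducible cubic hypersurface of dimension $2$.

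Next I would produce a $k$-rational point on $X$. Since $k = \mathbb{F}_{q}$ is a $C_{1}$ field, every cubic hypersurface in $\mathbb{P}^{3}_{k}$ automatically has a $k$-rational point by Chevalley--Warning. Alternatively, one could appeal to Esnault's theorem cited in the introduction applied to the smooth model $X'$, or directly to Theorem \ref{theorem: Nathan Kaplan result}. In any case, $X(k) \neq \emptyset$.

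The key verification is that $X$ is not a cone over a plane cubic curve. If it were, the vertex would be a singular point whose minimal resolution replaces it by a copy of the underlying plane cubic. By Proposition \ref{prop: resolution of RDP surfaces and Dynkin diagrams}, every rational double point has exceptional locus consisting of $(-2)$-curves arranged in an $ADE$ Dynkin configuration, and in particular consisting of rational curves. Since a plane cubic curve has arithmetic genus $1$ and is never a union of $\mathbb{P}^{1}$'s meeting in a Dynkin configuration, the vertex of a cubic cone is not a rational double point. As $X$ has only rational double point singularities by Definition \ref{defn: singular del Pezzo}, $X$ cannot be a cone.

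Having established that $X$ is an irreducible cubic surface in $\mathbb{P}^{3}$, is not a cone, and possesses a $k$-rational point, Theorem \ref{thm: Kollar} then yields a smooth $k$-rational point on $X$. The only part that is not essentially formal is the non-cone verification, and even that follows directly from the classification of Du Val singularities, so I do not anticipate serious obstacles here.
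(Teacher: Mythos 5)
Your proposal is correct and follows essentially the same route as the paper: obtain a $k$-point (the paper uses Theorem \ref{theorem: Nathan Kaplan result}) and then invoke Theorem \ref{thm: Kollar}. The paper's proof is a one-liner that leaves the anticanonical embedding and the non-cone hypothesis implicit; your verification that the vertex of a cubic cone is not a rational double point (since a plane cubic has arithmetic genus $1$) is a worthwhile detail the paper omits.
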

\begin{proof}
By Corollary \ref{theorem: Nathan Kaplan result} there always exists a $k$-point on $X$, then immediately from Theorem \ref{thm: Kollar} $X$ has a smooth rational point.
\end{proof}
\begin{remark}\label{rem: C-W cubic hypersurface}
To prove Corollary \ref{cor: dp3} where $X$ is a general cubic hypersurface, one can use Chevalley-Warning.
\end{remark}
\section{Del Pezzo surfaces of degree 2}\label{sec: del Pezzo of degree 2}
Any singular del Pezzo surface $X$ of degree 2 is of the form \[
X: w^{2}+wG_{2}(x,y,z)+G_{4}(x,yz) \subset \mathbb{P}(1,1,1,2)
\] where $G_{i}$ is a homogeneous polynomial of degree $i$.
\begin{definition} Let $S$ be a variety over a field $K$. We call the morphism generated by the linear system $\mid -K_{S}\mid $, the anticanonical morphism.
\end{definition} 
\begin{remark} Let $X$ be a singular del Pezzo of degree 2 over a field $K$ and denote by $f$ anticanonical map associated to $X$ \[
f: X\rightarrow \mathbb{P}^{2}_{K},[x:y:z:w]\mapsto [x:y:z].
\]This realises $X$ as a double cover of $\mathbb{P}^{2}_{K}$. 
If $\charr(K)\neq 2$ then by completing the square we can assume that $X$ is a of the form $w^{2}=G_{4}(x,y,z)$ where $G_{4}$ is a quartic curve. In this case the double cover has a \emph{branch locus} $B:G_{4}(x,y,z)=0$. If $\charr(K)=2$ the morphism $f$ can be inseparable. In the cases $f$ is separable the double cover has a \emph{branch locus} defined by the plane conic $B:G_{2}(x,y,z)=0$ (which can be reducible or non-reduced). 
In both cases we take the \emph{ramification curve} to be the reduced subscheme of $f^{-1}(B)$ denoted by $f^{-1}(B)_{\text{Red}}$ i.e.\ the ramification curve $R$ is given by \[
R:\begin{cases}
G_{4}(x,y,z) = 0 &\text{ if } \charr(K)\neq 2,\\
G_{2}(x,y,z)=0,w^2=G_{4}(x,y,z)&\text{ if } \charr(K)= 2.\\
\end{cases}
\] 
\end{remark}
\subsection{Separable anticanonical morphism}
In this section we give cases where the anticanonical morphism associated to a singular del Pezzo surface of degree 2 is separable. The proof is inspired by \cite[Prop 3.1]{DM23}. The statement in \cite{DM23} is only for smooth del Pezzo surfaces of degree 2, we show this proof also works for their singular counterparts.
\begin{lemma}\label{lemma: separable condition}
Let $K$ be a field and $X,Y$ integral separated schemes of finite type over $K$. Suppose $f:X\rightarrow Y$ is a finite morphism and $K^{\text{sep}}(X)/K^\text{sep}(Y)$ is separable, then $K(X)/K(Y)$ is separable.
\end{lemma}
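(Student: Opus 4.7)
The plan is to exploit the transitivity of separability for algebraic field extensions, by constructing a tower
\[
K(Y) \;\subseteq\; K^{\text{sep}}(Y) \;\subseteq\; K^{\text{sep}}(X)
\]
in which both stages are separable algebraic, and then observing that any intermediate subextension of a separable algebraic extension is itself separable. This strategy is essentially field-theoretic; the schemes $X$ and $Y$ enter only to the extent that $f$ finite makes $K(X)/K(Y)$ a finite (hence algebraic) extension.

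First, I would verify that $K^{\text{sep}}(Y)/K(Y)$ is separable algebraic. Since $K^{\text{sep}}/K$ is separable algebraic by definition, each $\alpha\in K^{\text{sep}}$ has a separable minimal polynomial over $K$; its minimal polynomial over $K(Y)$ divides the one over $K$, hence is still separable. Because $K^{\text{sep}}(Y)$ is generated over $K(Y)$ by such $\alpha$, the extension $K^{\text{sep}}(Y)/K(Y)$ is separable algebraic. The second stage $K^{\text{sep}}(X)/K^{\text{sep}}(Y)$ is separable by hypothesis and algebraic (indeed finite) because $f$ is finite. Invoking transitivity of separability for algebraic extensions, $K^{\text{sep}}(X)/K(Y)$ is separable algebraic. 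Since $f^{\#}$ factors as $K(Y)\hookrightarrow K(X)\hookrightarrow K^{\text{sep}}(X)$, every element of $K(X)$ lies in a separable algebraic extension of $K(Y)$ and so has separable minimal polynomial over $K(Y)$, yielding the desired conclusion.

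The only point I anticipate having to be careful about is the interpretation of $K^{\text{sep}}(X)$ and $K^{\text{sep}}(Y)$ when $X$ or $Y$ fails to be geometrically integral over $K$: the base changes $X\times_K\Spec K^{\text{sep}}$ and $Y\times_K\Spec K^{\text{sep}}$ may decompose into several irreducible components, and one must pin down compatible components so that $K^{\text{sep}}(Y)\hookrightarrow K^{\text{sep}}(X)$ is a genuine embedding of fields compatible with $f$. Once this convention is fixed the argument above applies verbatim; this is a notational nuance rather than a real obstacle.
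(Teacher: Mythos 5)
Your proof is correct and follows essentially the same route as the paper: both build the tower $K(Y)\subseteq K^{\text{sep}}(Y)\subseteq K^{\text{sep}}(X)$, use transitivity of separability, and conclude via the fact that a subextension of a separable algebraic extension is separable. Your extra verification that $K^{\text{sep}}(Y)/K(Y)$ is separable and your remark about pinning down components under base change are details the paper leaves implicit, but the argument is the same.
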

\begin{proof}
As $K^{\text{sep}}(X)/K^{\text{sep}}(Y)$ and $K^{\text{sep}}(Y)/K(Y)$ are separable $K^{\text{sep}}(X)/K(Y)$ is separable. Then the inclusion of fields $K(Y)\subset K(X)\subset K^{\text{sep}}(X)$ implies via \cite[\S 4, Thm 4.5]{L02} that $K(X)/K(Y)$ is separable. 
\end{proof}

\begin{lemma}\label{lemma: Picard rank of singular del Pezzo surface} Let $X$ be a singular del Pezzo surface of degree $d$ over an algebraically closed field $K$ with minimal desingularisation $\pi:X'\rightarrow X$. If $n:=\#(-2)$-curves on $X'$, then the rank of $\Pic X$ is $10-d-n$.
\end{lemma}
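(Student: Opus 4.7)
The plan is to compare $\Pic X$ with $\Pic X'$ via the exceptional locus of $\pi$, using a standard exact sequence for resolutions of normal surface singularities. First, since $X'$ is the minimal desingularisation of $X$, the Remark following the definition of weak del Pezzo surfaces identifies $X'$ as a weak del Pezzo of degree $d$; consequently $\Pic X'$ has rank $10-d$ by Proposition \ref{prop: picard group of wdP} when $d\leq 6$, and by the explicit description of $X'$ as a blow-up of $\mathbb{P}^{2}$ or as a Hirzebruch surface $\mathbb{F}_{2}$ when $d\geq 7$.

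Next I would establish the short exact sequence
\[
0 \longrightarrow \bigoplus_{i=1}^{n} \mathbb{Z}[E_{i}] \longrightarrow \Pic X' \longrightarrow \Cl X \longrightarrow 0,
\]
where $E_{1},\dots,E_{n}$ are the $(-2)$-curves on $X'$, the first map sends $E_{i}$ to its class in $\Pic X'$, and the second map is the pushforward $\pi_{*}:\Cl X'=\Pic X'\to \Cl X$. Surjectivity of $\pi_{*}$ is clear because every prime divisor on $X$ is the image of its strict transform under $\pi$. The kernel of $\pi_{*}$ is generated, as an abelian group, by the prime divisors contracted by $\pi$, which are exactly the $E_{i}$ by Proposition \ref{prop: resolution of RDP surfaces and Dynkin diagrams}. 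The injectivity of the left-hand map is the key point: if $\sum a_{i}E_{i}\sim 0$ in $\Pic X'$, then its self-intersection vanishes, but the intersection pairing on the $E_{i}$ lying above a fixed singular point of $X$ is the negative of the corresponding ADE Cartan matrix (again by Proposition \ref{prop: resolution of RDP surfaces and Dynkin diagrams}) and is therefore negative definite, forcing all $a_{i}=0$. Exactness then gives $\rank \Cl X = 10-d-n$.

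Finally, I would invoke the fact that rational double points are canonical singularities and hence $\mathbb{Q}$-factorial, so the natural injection $\Pic X \hookrightarrow \Cl X$ has finite cokernel. Tensoring with $\mathbb{Q}$ yields $\rank \Pic X=\rank \Cl X = 10-d-n$, as claimed.

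The main technical obstacle is the injectivity at the left of the displayed sequence, i.e.\ the linear independence of the $(-2)$-curves in $\Pic X'$; this rests on the negative definiteness of the exceptional intersection form, which in our setting is immediate from the ADE classification of the resolution graph but deserves an explicit citation. The passage from $\Cl X$ to $\Pic X$ via $\mathbb{Q}$-factoriality of Du Val singularities is standard and should be stated with a reference.
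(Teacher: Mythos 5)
Your argument is correct in substance but takes a genuinely different route from the paper. The paper quotes an exact sequence of Bright,
\[
0\rightarrow \Pic X\xrightarrow{\pi^{*}} \Pic X'\rightarrow \mathcal{R}^{\vee}\rightarrow \Br X \rightarrow \Br X',
\]
kills $\Br X'$ using rationality of $X'$ over the algebraically closed field, notes that the Brauer groups are torsion, and tensors with $\mathbb{Q}$ to get $\rank\Pic X+\rank\mathcal{R}=\rank\Pic X'=10-d$; the whole comparison between Cartier divisors upstairs and downstairs is thus outsourced to the cited sequence. You instead work with the class group: the localization sequence $0\to\bigoplus_{i}\mathbb{Z}[E_{i}]\to\Pic X'\to\Cl X\to 0$, with injectivity on the left coming from negative definiteness of the ADE intersection matrices, and then you pass from $\Cl X$ to $\Pic X$ by $\mathbb{Q}$-factoriality. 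Your route is more elementary and self-contained (it avoids Brauer groups entirely), at the cost of the extra $\Cl$-versus-$\Pic$ step; the paper's route lands on $\Pic X$ directly. You also handle $d\geq 7$ explicitly, which the paper's proof glosses over.

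One point you should repair before this is citable: the implication ``canonical, hence $\mathbb{Q}$-factorial'' is false in general (the cone over a smooth quadric surface is a terminal threefold singularity that is not $\mathbb{Q}$-factorial), so it cannot be the stated reason. The correct justification in your setting is surface-specific: by Lipman's theorem, a rational surface singularity has finite local divisor class group (computed from the discriminant of the exceptional intersection form), so the cokernel of $\Pic X\hookrightarrow\Cl X$ injects into a finite group and the two have equal rank. (In characteristic zero one could alternatively say Du Val points are quotient singularities, but since the paper works in all characteristics you should quote Lipman.) With that citation fixed, your proof is complete.
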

\begin{proof}
We have an exact sequence \cite[Prop 1]{B13} \begin{equation}\label{eq: Picard groupof singular del Pezzo}
0\rightarrow \Pic X\xrightarrow[]{\pi^{*}} \Pic X'\rightarrow \mathcal{R}^{\vee}\rightarrow \Br X \rightarrow \Br X'.
\end{equation} As $X'$ is rational and $K$ is algebraically closed $\Br X'=0$. Moreover, $\Br X'$ is torsion  \cite[Prop 4]{B13} so tensoring (\ref{eq: Picard groupof singular del Pezzo}) with the flat $\mathbb{Z}$-module $\mathbb{Q}$ results in the exact sequence of $\mathbb{Q}$-vector spaces\[
0\rightarrow \Pic X\otimes \mathbb{Q}\xrightarrow[]{\pi^{*}} \Pic X'\otimes \mathbb{Q}\rightarrow\mathcal{R}^{\vee}\otimes \mathbb{Q}\rightarrow 0.
\] As tensoring preserves direct sums \[
\rank_{\mathbb{Z}}\Pic X+\rank_{\mathbb{Z}}\mathcal{R}=\rank_{\mathbb{Z}} \Pic X'.
\] We have $X'$ is the blow up of $9-d$ points of $\mathbb{P}^2$, showing $\Pic X'$ has rank $10-d$ and as the set of $(-2)$-curves in $ \Pic X'$ are linearly independent $\rank_{\mathbb{Z}}\mathcal{R}=n$. The statement now follows.
\end{proof}

\begin{proposition}\label{cor: separable morphism for dP2s}  Denote by $X$ a singular del Pezzo of degree 2 over a perfect field $K$. Let $X'$ be the minimal desingularisation of $X$. If the number of $(-2)$-curves on $\bar{X}'$ less than 7, then the anticanonical morphism $f: X\rightarrow \mathbb{P}^{2}_{k}$ is separable.
\end{proposition}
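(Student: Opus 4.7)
The plan is to reduce to the algebraically closed case via Lemma~\ref{lemma: separable condition} and then derive a contradiction from Lemma~\ref{lemma: Picard rank of singular del Pezzo surface} by a Picard-rank comparison. First, since $K$ is perfect we have $K^{\text{sep}} = \bar{K}$, so by Lemma~\ref{lemma: separable condition} it suffices to show that $\bar{K}(\bar{X}) / \bar{K}(\mathbb{P}^{2}_{\bar{K}})$ is separable. Hence we may assume $K$ is algebraically closed and replace $X$ by $\bar{X}$, $X'$ by $\bar{X}'$.

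Suppose for contradiction that $f$ is not separable. Since $\deg f = 2$, the extension $K(X)/K(\mathbb{P}^{2})$ must be purely inseparable of degree $2$, so $f$ is a finite universal homeomorphism of normal projective surfaces. The key step is to show that this forces $\rank \Pic X = \rank \Pic \mathbb{P}^{2} = 1$. On the one hand, $f$ is flat (by miracle flatness, as $\mathbb{P}^{2}$ is smooth and rational double points are Cohen-Macaulay, being Gorenstein), so $f_{*} f^{*} = 2 \cdot \id$ on $\Pic \mathbb{P}^{2}$; tensoring with $\mathbb{Q}$ yields injectivity of $f^{*}$. On the other hand, since $f$ is a homeomorphism, every prime divisor $D \subset X$ is the reduced preimage $f^{-1}(f(D))_{\text{Red}}$, and hence scheme-theoretically $f^{*}[f(D)] = m_{D} [D]$ for some positive integer $m_{D}$; this gives surjectivity of $f^{*} \otimes \mathbb{Q}$.

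To conclude, Lemma~\ref{lemma: Picard rank of singular del Pezzo surface} applied to $X$ gives $\rank \Pic X = 10 - 2 - n = 8 - n$, where $n$ is the number of $(-2)$-curves on $X'$. Combined with $\rank \Pic X = 1$, this forces $n = 7$, contradicting the hypothesis $n < 7$.

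The main obstacle is the Picard-rank comparison for a purely inseparable cover of $\mathbb{P}^{2}$: this is a folklore fact, but justifying both the application of miracle flatness (which requires checking that $X$ is Cohen-Macaulay) and the surjectivity argument coming from the universal-homeomorphism property needs a careful write-up. Once this is in hand, the result follows immediately from the two earlier lemmas.
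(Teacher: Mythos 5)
Your proposal is correct in substance but takes a genuinely different route from the paper at the key step. Both proofs begin identically: reduce to $K$ algebraically closed via Lemma~\ref{lemma: separable condition}, observe that inseparability forces $f$ to be purely inseparable of degree $2$ and hence a (universal) homeomorphism, and derive a contradiction with $\rank \Pic \bar{X} = 8-n \geq 2$ from Lemma~\ref{lemma: Picard rank of singular del Pezzo surface}. Where the paper differs is in how it shows the homeomorphism property forces $\rank \Pic X = 1$: it passes through $\ell$-adic \'etale cohomology for a prime $\ell \gg 0$, using the Kummer sequence and the vanishing of $\Br X[\ell^{n}]$ to identify $\Pic X \otimes \mathbb{Z}_{\ell}$ with $\HH^{2}(X,\mathbb{Z}_{\ell})$, and then invoking topological invariance of the \'etale site under universal homeomorphisms to conclude $\HH^{2}(X,\mathbb{Z}_{\ell}) \cong \HH^{2}(\mathbb{P}^{2},\mathbb{Z}_{\ell})$, which has rank $1$. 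You replace this with an elementary divisor-theoretic argument: since $f$ is a bijective finite morphism, every prime divisor $D$ on $X$ satisfies $f^{*}[f(D)] = m_{D}D$, so $f^{*}\otimes\mathbb{Q}$ is surjective onto the rational divisor class group, bounding the rank by $1$. This buys you a proof with no \'etale cohomology or Brauer-group input, at the cost of the bookkeeping you already flag: your surjectivity statement is really about $\operatorname{Cl}(X)\otimes\mathbb{Q}$ rather than $\Pic X\otimes\mathbb{Q}$ (which suffices, since $\Pic X \hookrightarrow \operatorname{Cl}(X)$ for $X$ normal), and the identity $f_{*}f^{*} = 2\cdot\id$ on $\Pic$ should be phrased via the norm map $\det f_{*}(-)\otimes(\det f_{*}\mathcal{O}_{X})^{-1}$, or more simply via proper pushforward of Weil divisor classes, since $f_{*}$ of a line bundle is a rank-$2$ sheaf. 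Note also that the injectivity half of your comparison is not actually needed for the contradiction; surjectivity alone gives $\rank\Pic X \leq 1 < 8-n$.
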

\begin{proof} We can assume that $K$ is algebraically closed by Lemma \ref{lemma: separable condition}. If the characteristic of $K$ is not $2$ then this is clear. Assume now that $\charr(K)= 2$ and $f$ is purely inseparable. Pick  a prime $l\gg 0$ and consider the Kummer sequence of étale sheaves associated to $X$ \[
0\rightarrow \mu_{l^{n},X}\rightarrow \mathbb{G}_{m,X}\rightarrow \mathbb{G}_{m,X}\rightarrow 0
\] where $n\in \mathbb{N}$. Applying étale cohomology gives an exact sequence \[\begin{tikzcd}
0 \arrow[r] & \HH^{1}(X,\mu_{l^{n}}) \arrow[r] & \HH^{1}(X,\mathbb{G}_{m}) \ar[draw=none]{d}[name=X, anchor=center]{}\arrow[r,"x\mapsto x^{\eln}"]
    & \HH^{1}(X,\mathbb{G}_{m}) \ar[rounded corners,
            to path={ -- ([xshift=2ex]\tikztostart.east)
                      |- (X.center) \tikztonodes
                      -| ([xshift=-2ex]\tikztotarget.west)
                      -- (\tikztotarget)}]{dll}[at end]{\ }       &               &    \\
            & \HH^{2}(X,\mu_{l^{n}}) \arrow[r,] & \HH^{2}(X,\mathbb{G}_{m}) \arrow[r,"x\mapsto x^{\eln}"] & \HH^{2}(X,\mathbb{G}_{m}) \arrow[r]  & \HH^{3}(X,\mu_{l^{n}}) \arrow[r] & \dotsc
\end{tikzcd} \] Using $\Pic X=\HH^{1}(X,\mathbb{G}_{m})$ and $\Br X=\HH^{2}(X,\mathbb{G}_{m})$ the exact sequence reduces to 
\[
0\rightarrow \Pic X/(\Pic(X)[l^{n}])\rightarrow \HH^{2}(X,\mu_{l^{n}})\rightarrow \Br X[l^{n}]\rightarrow 0
\] and as we chose $l\gg0$ we have $\Br X[l^{n}]=0$ for all $n\in \mathbb{N}$ \cite{B13}. Taking inverse limits we get $\Pic X\times \mathbb{Z}_{l}\cong \HH^{2}(X,\mathbb{Z}_{l})$. If $f$ is purely inseparable then $f$ is a homeomorphism in the étale topology, however the rank of $\HH^{2}(X,\mathbb{Z}_{l})$ is greater than 1 by Lemma \ref{lemma: Picard rank of singular del Pezzo surface} and $\HH^{1}(\mathbb{P}^2,\mathbb{Z}_{l})$ has rank 1.
\end{proof}
\subsection{Conic bundles}\label{subsec: conic bundles}
In this subsection we introduce conic bundles and their relation to singular del Pezzo surfaces of degree 2.
\begin{definition}[{Conic bundles}]
A \emph{conic bundle} is a smooth projective surface $S$ with a dominant morphism $S\rightarrow \mathbb{P}^1$, whose geometric generic fibre is a smooth integral curve of genus 0.
\end{definition}
\begin{remark}
We do not require all fibres to be plane conics in our definition of a conic bundle.
\end{remark}
\begin{lemma}\label{lemma: conic bundles lemma}
Let $S$ be a smooth projective surface over a perfect field $K$ with $\HH^{1}(S,\mathcal{O}_{S})=0$ and a curve $C\subset S$ of arithmetic genus 0. Suppose $-K_{S}\cdot C=2$, then the complete linear system $\mid C \mid$ is a pencil. Moreover, if $\mid C \mid$ has no fixed component then $\mid C \mid$ induces a conic bundle.
\end{lemma}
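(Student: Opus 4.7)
The plan is as follows. By adjunction,
\[
2p_{a}(C)-2 = C^{2} + K_{S}\cdot C,
\]
the assumptions $p_{a}(C)=0$ and $-K_{S}\cdot C=2$ immediately yield $C^{2}=0$. To show $|C|$ is a pencil I would compute $h^{0}(S,\mathcal{O}_{S}(C))$ from the long exact cohomology sequence of
\[
0\to \mathcal{O}_{S}\to \mathcal{O}_{S}(C)\to \mathcal{O}_{C}(C)\to 0.
\]
The line bundle $\mathcal{O}_{C}(C)$ has degree $C^{2}=0$ on $C$, and since $\HH^{1}(C,\mathcal{O}_{C})=0$ is the tangent space at the identity to the Picard scheme of $C$, the component $\Pic^{0}(C)$ is trivial and $\mathcal{O}_{C}(C)\cong\mathcal{O}_{C}$; hence $h^{0}(\mathcal{O}_{C}(C))=1$ and $h^{1}(\mathcal{O}_{C}(C))=0$. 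Combined with the hypothesis $\HH^{1}(S,\mathcal{O}_{S})=0$, the long exact sequence collapses to $0\to K\to \HH^{0}(\mathcal{O}_{S}(C))\to K \to 0$, forcing $h^{0}(\mathcal{O}_{S}(C))=2$ and therefore $\dim|C|=1$.

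For the second statement, the pencil is base-point free: if $C, C'\in|C|$ are two distinct members with no common irreducible component (which exist because $|C|$ has no fixed component), then $\#(C\cap C')=C\cdot C'=C^{2}=0$, so $C\cap C'=\emptyset$. The complete pencil therefore induces a morphism $\phi:S\to|C|^{\vee}\cong\mathbb{P}^{1}$ whose scheme-theoretic fibres are exactly the members of $|C|$, each of arithmetic genus $0$ by flatness of $\phi$ (automatic since $S$ is Cohen--Macaulay and $\mathbb{P}^{1}$ is regular of dimension one). A disjoint union of two non-empty curves each of arithmetic genus $\geq 0$ has total $p_{a}\leq -1$, so every fibre is connected; moreover, since $\phi$ is induced by a complete pencil one has $\phi_{*}\mathcal{O}_{S}=\mathcal{O}_{\mathbb{P}^{1}}$, yielding geometrically connected fibres.

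It remains to verify that the geometric generic fibre is a smooth integral curve of genus $0$. An irreducible curve of arithmetic genus $0$ is automatically smooth (any singularity contributes positively to $p_{a}$), so it suffices to establish irreducibility of the geometric generic fibre. I expect this to be the main obstacle: in positive characteristic a geometrically connected fibre of arithmetic genus $0$ could a priori decompose into a tree of $\mathbb{P}^{1}$'s after base change to $\overline{K(\mathbb{P}^{1})}$. I anticipate handling this via a Bertini-type argument for base-point-free pencils on the smooth surface $S$, or by exploiting irreducibility of $S$ together with the perfectness of $K$ to pass between the generic fibre and its geometric counterpart.
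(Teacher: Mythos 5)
Your argument follows the paper's proof essentially step for step: the same short exact sequence $0\to \mathcal{O}_{S}\to \mathcal{O}_{S}(C)\to \mathcal{O}_{C}(C)\to 0$, the same computation $h^{0}(\mathcal{O}_{C}(C))=1$ giving $h^{0}(\mathcal{O}_{S}(C))=2$, and the same observation that $C^{2}=0$ plus the absence of a fixed component makes the pencil base point free. The one step you explicitly leave open --- geometric irreducibility of the general member --- is exactly where the paper's proof goes: it reduces to $\bar{K}$ at the outset and applies Bertini there to get a smooth irreducible general member, and since an integral curve of arithmetic genus $0$ is automatically smooth (as you note), irreducibility is all that is needed; your auxiliary connectedness discussion (where the inequality for a disjoint union should read $p_{a}=\sum p_{a}(C_{i})-(r-1)$) is not actually required by the paper's definition of conic bundle, which only constrains the geometric generic fibre.
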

\begin{proof}
It is sufficient to prove the statement over $\bar{K}$. Consider the exact sequence \[
0\rightarrow \mathcal{O}_{S}\rightarrow \mathcal{O}_{S}(C)\rightarrow \mathcal{O}_{C}(C)\rightarrow 0.
\] Applying Zariski cohomology and using $\HH^{1}(S,\mathcal{O}_{S})=0$ we get an exact sequence \[
0\rightarrow \HH^{0}(S,\mathcal{O}_{S})\rightarrow \HH^{0}(S,\mathcal{O}_{S}(C))\rightarrow \HH^{0}(S,\mathcal{O}_{C}(C))\rightarrow 0.
\] As $C^{2}=0$ we can use adjunction to show $\dim\HH^{0}(S,\mathcal{O}_{C}(C))=1$ and as $S$ is projective $\dim\HH^{0}(S,\mathcal{O}_{S})=1$. Applying a dimension argument for vector spaces it is easy to see $\dim \HH^{0}(S,\mathcal{O}_{S}(C))=2$. We can conclude $\dim \mid C \mid =1$ and that it is a pencil. If $\mid C\mid$ has no fixed component, using $C^{2}=0$ and the fact that $\mid C \mid$ is a pencil gives that $\mid C \mid$ is base point free. Using Bertini \cite[Thm 8.18]{H77} over $\bar{K}$, a generic member is smooth and irreducible, hence integral. We can now deduce the generic fibre of the morphism $S\rightarrow \mathbb{P}^{1}$ is a geometrically integral smooth curve of arithmetic genus 0.
\end{proof}
\begin{lemma}\label{lemma: conic bundle for dP2} Let $X$ be a singular del Pezzo surface over a field of odd characteristic with one double rational point singularity, then $X'$ has a conic bundle structure $\phi:X'\rightarrow \mathbb{P}^{1}$.
\end{lemma}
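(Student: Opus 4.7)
The plan is to use Lemma \ref{lemma: conic bundles lemma} by exhibiting a divisor class $C$ on $X'$ with $-K_{X'} \cdot C = 2$, $p_{a}(C) = 0$, and a pencil in $|C|$ with no fixed part. The natural candidate arises from the pencil of lines through the image of the singular point under the anticanonical morphism.

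Since $\charr(k) \neq 2$, the anticanonical morphism $f : X \to \mathbb{P}^{2}$ is a separable double cover with a well-defined branch curve $B$. Let $q$ be the singular point of $X$, set $p := f(q)$, and denote by $E = \bigcup_i E_i$ the exceptional divisor of $\pi$, whose components $E_{i}$ form a connected Dynkin configuration of type $A_{1}, \dots, A_{7}, D_{4}, \dots, D_{7}, E_{6}$ or $E_{7}$. As $q$ is a singular ramification point, $p$ must be a singular point of $B$. The pencil $\Lambda$ of lines in $\mathbb{P}^{2}$ through $p$ pulls back via $g := \pi \circ f : X' \to \mathbb{P}^{2}$ to a pencil $\{D_{L}\}_{L \in \Lambda}$ inside $|{-K_{X'}}|$ (using that $f$ is anticanonical and $\pi^{*}K_{X} = K_{X'}$ for minimal resolutions of rational double points); every member of this pencil contains the exceptional divisor $E$.

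Let $Z$ be the fundamental cycle of the rational double point at $q$. The key claim is that the fixed part of the pencil $\{D_{L}\}$ equals $Z$. Granting this, set $C := -K_{X'} - Z$; using $-K_{X'} \cdot Z = 0$ (the rational double point being canonical, so $K_{X'} \cdot E_{i} = 0$ for every $E_i$) and $Z^{2} = -2$, one computes $-K_{X'} \cdot C = 2$ and $C^{2} = 0$, whence $p_{a}(C) = 0$ by adjunction. To verify the claim, work in local coordinates $(u, v)$ on $\mathbb{P}^{2}$ around $p$ in which $X$ has double-cover equation $w^{2} = h(u,v)$ with $B = \{h = 0\}$: an explicit computation of the orders of vanishing of $u$ and $v$ along each $E_{i}$ shows that $g^{*}(\alpha u + \beta v)$ vanishes on $E_{i}$ with multiplicity equal to the coefficient of $E_{i}$ in $Z$, independently of $(\alpha, \beta)$.

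Setting $C_{L} := D_{L} - Z \in |C|$, one obtains a pencil $\{C_{L}\}_{L \in \Lambda} \subseteq |C|$. For two distinct lines $L_{1}, L_{2} \in \Lambda$, we have $C_{L_{1}} \cdot C_{L_{2}} = C^{2} = 0$, and the two divisors cannot share an irreducible component (their images under $g$ are distinct lines meeting only at $p$); hence $C_{L_{1}} \cap C_{L_{2}} = \emptyset$ and the pencil is base-point free. In particular $|C|$ has no fixed component, and since $X'$ is rational we have $\HH^{1}(X', \mathcal{O}_{X'}) = 0$, so Lemma \ref{lemma: conic bundles lemma} yields the desired conic bundle $\phi : X' \to \mathbb{P}^{1}$. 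The delicate step, and the main anticipated obstacle, is the local multiplicity identification of the fixed part with $Z$; if a uniform argument proves elusive, a brief case-by-case analysis on the Dynkin type will suffice.
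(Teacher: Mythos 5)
Your proposal is correct and follows essentially the same route as the paper: both pull back the pencil of lines through the image of the singular point under the anticanonical double cover, strip off the exceptional fixed part to get a class $C$ with $-K_{X'}\cdot C=2$, $C^2=0$, $p_a(C)=0$, and then invoke Lemma \ref{lemma: conic bundles lemma}. The one substantive difference is that you subtract the fundamental cycle $Z$, whereas the paper subtracts $E:=\pi^{-1}(x)$ and treats it as the (reduced) exceptional divisor; these coincide for $A_n$-points (the only cases the lemma is subsequently applied to), but for $D_n$ or $E_n$ points your choice is the right one, since the reduced exceptional divisor would leave the residual cycle $Z-E$ as a fixed component of $|{-K_{X'}}-E|$. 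Your ``delicate step'' (fixed part $=Z$) can also be settled without local coordinates: the fixed part $F$ satisfies $F\geq Z$ because the pullback of the pencil is anti-nef on the exceptional locus, while $(-K_{X'}-F)^2\geq 0$ forces $F^2=-2$, and $Z$ is the only such cycle $\geq Z$; this avoids the case-by-case analysis you anticipate.
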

\begin{proof}
Denote by $x$ the singular point on $X$ and $E:=\pi^{-1}(x)$. Consider the curve linear system $\mid -K_{X'}-E\mid $. It easy to check that \[
(-K_{X'}-E)^2=0 \text{ and } -K_{X'}\cdot (-K_{X'}-E)=2.
\]  We can check via Riemann-Roch for surfaces that the arithmetic genus of $D\in \mid -K_{X'}-E\mid$ is 0. By Lemma \ref{lemma: conic bundles lemma}, it is now sufficient to prove that the complete linear system $\mid-K_{X'}-E\mid$ has no fixed component. All $D\in \mid-K_{X'}-E\mid$ correspond to the strict transform of curves on $X$ through the singular point $x$. Using the anticanonical morphism $f:X\rightarrow \mathbb{P}^2$, we see that curves through the singular point on $X$, get mapped to lines through the singular point on the branch locus. As this family of lines forms a pencil, it is clear there is no fixed component, showing that $\mid -K_{X'}-E\mid$, has no fixed competent.
\end{proof}

\subsection{Remaining cases}
Running algorithm \ref{algorithm: determine traces on singular del pezzo} through all possible cases of singular del Pezzo surfaces of degree 2 we see that the cases for which we don't know that $X^{\text{smooth}}(\mathbb{F}_{q})\neq \emptyset$ are as follows.
\begin{listofremainingcase}~
\begin{enumerate}
\item $A_{1}$ over $\mathbb{F}_{2},\mathbb{F}_{3}$ and $\mathbb{F}_{5}$,
\item $A_{2}$ over $\mathbb{F}_{2}$ and $\mathbb{F}_{4}$,
\item $[3A_{1}]',[3A_{1}]''$,$D_{4},A_{3}+2A_{1},2A_{2}+A_{1}$ over $\mathbb{F}_{2}$,
\item $A_{3},[4A_{1}]',[4A_{1}]''$ over $\mathbb{F}_{3}$,
\item $A_{4}$ over $\mathbb{F}_{4}$. 
\end{enumerate}
\end{listofremainingcase}
The list describes the singularity type over $\mathbb{F}_{q}$ and in each case this is the same as the geometric singularity type i.e.\ the singularity type over $\bar{\mathbb{F}}_{q}$. To establish Theorem \ref{thm: main thm} we deal with the cases in odd characteristic. Moreover, we give some examples where $X^{\text{smooth}}(\mathbb{F}_{q})=\emptyset$ in the remaining cases to establish Theorem \ref{thm: main thm 2}.
\begin{proposition}\label{prop: A_1 sing for dP2} Let $X$ be a singular del Pezzo of degree 2 over $\mathbb{F}_{q}$ with $q$ odd and $\bar{X}$ having one singular point which is an $A_{1}$-singularity. Then $X$ has a smooth rational point.
\end{proposition}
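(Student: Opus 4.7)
The plan is to exploit the conic bundle structure on $X'$ provided by the hypotheses. Since $X$ has exactly one singular point (of type $A_{1}$) and $q$ is odd, Lemma~\ref{lemma: conic bundle for dP2} gives a conic bundle $\phi\colon X' \to \mathbb{P}^{1}$ arising from the linear system $|-K_{X'}-E|$, where $E$ is the unique $(-2)$-curve, contracted by $\pi$ to the singular point. Writing $F$ for the fibre class, I first compute $F \cdot E = (-K_{X'}-E)\cdot E = 0 - (-2) = 2$ (using $K_{X'} \cdot E = 0$ from adjunction), so $E$ is a $2$-section. In particular $\phi|_{E}\colon E \cong \mathbb{P}^{1} \to \mathbb{P}^{1}$ is a finite map of degree $2$; since $q$ is odd it is separable, and Riemann--Hurwitz gives exactly $2$ branch points on the base $\mathbb{P}^{1}$.

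Next I would classify, for each $t \in \mathbb{P}^{1}(\mathbb{F}_{q})$, the fibre $F_{t}$ and count its $\mathbb{F}_q$-points away from $E$. If $F_{t}$ is a smooth conic over $\mathbb{F}_{q}$ then $\#F_{t}(\mathbb{F}_{q}) = q+1$ and $E \cap F_{t}$ accounts for at most $2$ rational points, yielding at least $q-1 \geq 2$ rational points of $X'$ off $E$. If $F_{t}$ degenerates into two $\mathbb{F}_{q}$-rational lines, or into a double line, then $F_{t}(\mathbb{F}_{q})$ still contains a copy of $\mathbb{P}^{1}(\mathbb{F}_{q})$, while $E \cap F_{t}$ contributes at most $2$ rational points, so we again have plenty of rational points off $E$. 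The only remaining possibility is $F_{t} = L_{1} + L_{2}$ with $L_{1}, L_{2}$ a Galois-conjugate pair of lines meeting at a rational node $p$; in this case $F_{t}(\mathbb{F}_{q}) = \{p\}$, and we obtain a rational point off $E$ provided $p \notin E$.

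The key observation is that the bad subcase $p \in E$ forces $t$ to be a branch point of $\phi|_{E}$. Indeed, Galois-invariance of $E$ forces $E \cdot L_{1} = E \cdot L_{2} = 1$, so if $p \in E$ then necessarily $E \cap L_{1} = E \cap L_{2} = \{p\}$, and hence $E \cap F_{t}$ is scheme-theoretically supported at $p$ with length $2$. Equivalently, the fibre of $\phi|_{E}$ over $t$ is a single non-reduced point, placing $t$ in the branch locus of $\phi|_{E}$. Since there are only $2$ such branch points, at most $2$ of the $q+1 \geq 4$ values $t \in \mathbb{P}^{1}(\mathbb{F}_{q})$ are bad, so at least $q-1 \geq 2$ good $t$ remain. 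For any such $t$, the fibre $F_{t}$ carries an $\mathbb{F}_{q}$-point of $X'$ off every $(-2)$-curve, and Proposition~\ref{prop: point lying away from -2 curve} then produces a smooth rational point of $X$.

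The main technical obstacle is the careful fibrewise analysis in the ``conjugate-pair'' case, together with verifying that $p \in E$ indeed forces $t$ into the branch locus of $\phi|_{E}$. Once these are in hand, the uniform bound $q+1-2 \geq 2$ for all odd $q \geq 3$ finishes the proof without any case split on $q$.
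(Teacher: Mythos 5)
Your proposal is correct and follows essentially the same route as the paper: the conic bundle $\phi$ from $|-K_{X'}-E|$, the observation that $E$ is a $2$-section, and Riemann--Hurwitz applied to $\phi|_{E}$ to bound by $2$ the number of fibres whose only rational point could lie on $E$, which is beaten by $\#\mathbb{P}^{1}(\mathbb{F}_{q})=q+1\geq 4$. If anything, your fibrewise case analysis (in particular, using the rational node of a non-split degenerate fibre as the point off $E$, and checking that $p\in E$ forces ramification) is spelled out more carefully than in the paper's own proof.
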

\begin{proof}
Denote by $p\in X$ the singular point and let $\pi:X'\rightarrow X$ be the minimal desingularisation of $X$ with $E$ the exceptional divisor lying above $p$. The linear system $|-K_{X'}-E|$ gives a conic bundle structure on $X'$ i.e. a morphism $\phi:X'\rightarrow \mathbb{P}^1$ by Lemma \ref{lemma: conic bundles lemma}. As \[
(-K_{X'}-E)\cdot E = 2
\] we have that $E$ meets each fibre twice (counted with multiplicity). If there exists a smooth fibre of $\phi$ defined over $\mathbb{F}_{q}$, then this fibre has a rational point not lying on $E$, then the image of this rational point under $\pi$ is smooth. Suppose every fibre above $\mathbb{P}^{1}(\mathbb{F}_{q})$ is singular. If $E$ intersects a singular fibre away from its singular point then we are done as this fibre is split over $\mathbb{F}_{q}$, namely has a rational point lying away from $E$. Consider the restriction $\phi|_{E}:E\rightarrow \mathbb{P}^1$, this is a degree 2 morphism and by Riemann-Hurwitz we have that the degree of the ramification divisor of $\phi$ is 2 i.e.\ $E$ cannot intersect each singular fibres at its singular points, hence there is a split singular fibre over $\mathbb{F}_{q}$.
\end{proof}

\begin{proposition}\label{prop: A_3 sing for dP2} Let $X$ be a del Pezzo of degree 2 with an $A_{3}$-singularity over $\mathbb{F}_{q}$ with $q$ odd. Then $X$ has a smooth point.
\end{proposition}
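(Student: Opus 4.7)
The plan is to adapt the conic bundle argument used for the $A_{1}$ case (Proposition \ref{prop: A_1 sing for dP2}). Since $X$ has a single rational double point, Lemma \ref{lemma: conic bundle for dP2} supplies a conic bundle $\phi \colon X' \to \mathbb{P}^{1}$ defined over $\mathbb{F}_{q}$, coming from the linear system $|-K_{X'} - E|$ where $E = E_{1} + E_{2} + E_{3}$ is the $A_{3}$-chain of $(-2)$-curves above the singular point. A direct intersection calculation (using $K_{X'} \cdot E_{i} = 0$) gives $(-K_{X'}-E) \cdot E_{1} = (-K_{X'}-E) \cdot E_{3} = 1$ and $(-K_{X'}-E) \cdot E_{2} = 0$: thus $E_{1}$ and $E_{3}$ are sections of $\phi$, while $E_{2}$ lies in a unique fibre $F_{t_{0}}$.

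For any $t \in \mathbb{P}^{1}(\mathbb{F}_{q}) \setminus \{t_{0}\}$, the fibre $F_{t}$ avoids $E_{2}$. A smooth $F_{t}$ is a conic over the finite field $\mathbb{F}_{q}$ and hence isomorphic to $\mathbb{P}^{1}_{\mathbb{F}_{q}}$, with $q+1 \geq 4$ rational points and at most $2$ on $E$, giving $\geq q - 1 \geq 2$ smooth rational points of $X$. Non-reduced fibres $F_{t} = 2L$ are excluded by the parity argument $F_{t} \cdot E_{1} = 1$, so any singular $F_{t}$ is of the form $D + D'$ with $D, D'$ two $(-1)$-curves meeting at a node.

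The remaining analysis splits according to the Galois action on $\{E_{1}, E_{3}\}$, which lies in $\Aut(A_{3}) \cong \mathbb{Z}/2\mathbb{Z}$. If Galois fixes each $E_{i}$, then $E_{1}$ is an $\mathbb{F}_{q}$-section, so $E_{1} \cap F_{t}$ is a smooth $\mathbb{F}_{q}$-rational point of $F_{t}$; since $E_{1} \cdot F_{t} = 1$ forbids $E_{1}$ from passing through the node, a non-split $F_{t}$ (whose only rational point is the node) is excluded, forcing $F_{t}$ to be split and yielding $\geq q - 2 \geq 1$ good rational points on each component. If Galois swaps $E_{1} \leftrightarrow E_{3}$, then for split $F_{t}$ the intersections with $E_{1}, E_{3}$ are Galois-conjugate (hence non-rational), so essentially all rational points of $F_{t}$ are good; for non-split $F_{t}$ the (necessarily rational) node still avoids $E$ by the same intersection argument, furnishing a good rational point.

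The hard part will be the Galois-swapped non-split singular case, but the simple intersection fact $E_{1} \cdot F_{t} = 1$ cleanly keeps the node off $E$. Since $|\mathbb{P}^{1}(\mathbb{F}_{q})| = q+1 \geq 4$ and at most one of these points is $t_{0}$, at least three $\mathbb{F}_{q}$-points of $\mathbb{P}^{1}$ fall under the above case analysis, each providing a smooth rational point of $X$.
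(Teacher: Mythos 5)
Your argument is correct and follows essentially the same route as the paper: the conic bundle induced by $|-K_{X'}-E_{1}-E_{2}-E_{3}|$, the intersection numbers $C\cdot E_{1}=C\cdot E_{3}=1$ and $C\cdot E_{2}=0$, and the observation that any fibre over a rational $t\neq t_{0}$ yields a rational point of $X'$ off the $(-2)$-curves. Your Galois split/non-split case analysis is sound but redundant, since in every singular case the node of the fibre is already a rational point avoiding $E_{1},E_{2},E_{3}$ (by the multiplicity-one intersections and $E_{2}\cap F_{t}=\emptyset$), which is precisely the point the paper's proof exhibits.
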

\begin{proof}
Let $\pi:X'\rightarrow X$ be the minimal desingularisation of $X$ and denote by $E_{1},E_{2},E_{3}$ the geometrically irreducible curves in the exceptional divisor of $\pi$. Let $C:=-K_{X'}-E_{1}-E_{2}-E_{3}$ and we have a conic bundle structure $\phi:X'\rightarrow \mathbb{P}^1$ given by the linear system $|C|$ by Lemma \ref{lemma: conic bundles lemma}. Without loss of generality we can assume $E_{1}\cdot E_{2}=E_{2}\cdot E_{3}=1$ and $E_{1}\cdot E_{3} = 0$. Then \[
E_{1}\cdot C = 1, E_{2}\cdot C = 0, E_{3}\cdot C=1.
\] Suppose there exists a smooth fibre of $\phi$ defined over $\mathbb{F}_{q}$, then it is automatic that $X$ has a smooth point as there will a rational point on $X'$ not lying on $E_{i}$ for $i=1,2,3$. Suppose all the fibres of $\phi$ defined over $\mathbb{F}_{q}$ are singular, then $E_{2}$ is a component of at most one singular fibre, hence there is at least one singular fibre where $E_{2}$ is not a component. We denote this fibre by $F$. As $E_{1}$ and $E_{2}$ do not intersect $F$ at its singular point, there is a $\mathbb{F}_{q}$-point of $X'$ not lying on any $(-2)$-curves. The statement then follows from considering the image of this point under $\pi$.
\end{proof}

\begin{lemma}[{\cite[p. 452]{D12}}] Let $X$ be a singular del Pezzo surface of degree 2 over an algebraically closed field $K$ of $\charr(K)\neq 2$. Suppose $X$ has 4 singular points of type $A_{1}$ defined over $K$. Then the ramification curve of the anticanonical morphism is either 
\begin{enumerate}
	\item irreducible cubic + line
	\item smooth conic + smooth conic
\end{enumerate}
\end{lemma}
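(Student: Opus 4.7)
The plan is to analyze the branch locus $B \subset \mathbb{P}^2$ of the anticanonical double cover $f:X\rightarrow \mathbb{P}^2_{K}$. Since $\charr(K)\neq 2$, completing the square allows us to write $X$ as $w^{2}=G_{4}(x,y,z)$, so the branch locus $B$ is the plane quartic cut out by $G_{4}=0$. Because $X$ is normal, $B$ is reduced, and singularities of $X$ correspond bijectively to those of $B$, with an $A_{1}$ singularity of $X$ matching an ordinary node of $B$. Thus $B$ is a reduced plane quartic with exactly four nodes and no other singular points.

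The first key step is to show $B$ must be reducible. A plane quartic has arithmetic genus $p_{a}=3$, so an irreducible plane quartic admits at most three nodes. Since $B$ carries four nodes, $B$ decomposes as a sum of lower-degree components. The possibilities for a reduced quartic decomposition are: (a) four distinct lines, (b) a line plus an irreducible cubic, (c) two distinct conics, (d) two distinct lines plus a conic.

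I would then dispose of (a) and (d) by counting intersections and checking local singularity types. Four lines in general position yield $\binom{4}{2}=6$ nodes, and any degeneration which reduces this count (three concurrent lines) produces a triple point, which is not of type $A_{1}$. In case (d), the generic count of $1+2+2=5$ nodes can only be lowered by line--conic tangencies (producing tacnodes, hence $A_{3}$) or triple incidences (producing worse singularities), so case (d) also fails to yield exactly four $A_{1}$'s. For case (b), an irreducible cubic $C_{3}$ has at most one singular point; a smooth cubic meets $L$ in only $3$ nodes, and a cuspidal cubic contributes an $A_{2}$, so we are forced into the configuration of a nodal irreducible cubic meeting $L$ transversally at three smooth points, giving $1+3=4$ nodes — this is case $(1)$. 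For case (c), Bezout gives $Q_{1}\cdot Q_{2}=4$; a tangency of the conics produces a tacnode and a reducibility of some $Q_{i}$ reduces to a previous case, so we must have two smooth conics meeting transversally — this is case $(2)$.

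The main obstacle is the careful verification of singularity types in each degenerate configuration: one must confirm that every coincidence which could lower the node count in cases (a) and (d) in fact produces a singularity strictly worse than $A_{1}$, and conversely that in the surviving configurations of (b) and (c) the four singularities really are ordinary nodes. This is a case-by-case local computation, most cleanly done by writing the two components in local coordinates around each intersection point and reading off the local normal form of the product of the defining equations.
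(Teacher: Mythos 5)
The paper does not actually prove this lemma: it is quoted verbatim from Dolgachev \cite[p.~452]{D12}, so there is no internal argument to compare yours against. Your proposal is a correct and essentially self-contained proof along the standard lines (classify the branch quartic $B$ by its reduced decomposition and match curve singularities of $B$ to surface singularities of the double cover). The two load-bearing observations are both present and right: normality of $X$ forces $B$ to be reduced, and in characteristic $\neq 2$ the double cover has an ADE singularity of the same type as each ADE singularity of $B$, so the hypothesis translates into ``reduced plane quartic with exactly four nodes''; the genus bound $p_a=3$ then kills the irreducible case, and the enumeration $3{+}1$, $2{+}2$, $2{+}1{+}1$, $1{+}1{+}1{+}1$ of decomposition types is exhaustive. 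Two small points to tighten when you write out the deferred local checks: in case (b) you must also exclude the line passing through the node of the cubic (three concurrent branches give a $D_4$, and the singular-point count drops to three anyway), not only tangency and cuspidal degenerations; and in cases (a) and (d) it is worth noting that an ordinary quadruple point or a non-simple tangency is not even an ADE singularity, so those configurations are excluded already by $X$ having only rational double points, independently of the count of four. With those routine verifications filled in, the argument is complete and recovers the slightly stronger normal form (nodal cubic meeting the line transversally at smooth points; two smooth conics meeting transversally).
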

\begin{lemma}\label{lemma: Galois conj conics for dP2}
Let $X$ be the surface \[
X:w^2=C_{1}C_{2}\subset \mathbb{P}(1,1,1,2)
\] where $C_{1}$ and $C_{2}$ are smooth conics which are Galois conjugates intersecting at $4$ points over a finite field $\mathbb{F}_{q}$ of odd characteristic. Then $X$ has a smooth rational point.
\end{lemma}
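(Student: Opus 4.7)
The approach is to produce a rational point $P\in\mathbb{P}^2(\mathbb{F}_q)$ lying away from $C_1\cap C_2$ such that $C_1(P)C_2(P)$ is a nonzero square in $\mathbb{F}_q$; the two lifts $(P,\pm w)\in X(\mathbb{F}_q)$ will then be smooth rational points. Writing $\sigma$ for the nontrivial element of $\mathrm{Gal}(\mathbb{F}_{q^2}/\mathbb{F}_q)$, we have $C_2=\sigma(C_1)$, so for any rational $P$, $C_1(P)C_2(P)=N_{\mathbb{F}_{q^2}/\mathbb{F}_q}(C_1(P))$; this norm is a square in $\mathbb{F}_q^{\times}$ iff $C_1(P)$ is a square in $\mathbb{F}_{q^2}^{\times}$, and in particular whenever $C_1(P)\in\mathbb{F}_q^{\times}$, since $\mathbb{F}_q^{\times}\subset(\mathbb{F}_{q^2}^{\times})^2$ for odd $q$.

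My first step is to pick $\lambda\in\mathbb{F}_{q^2}^{\times}$ with $\sigma(\lambda)=-\lambda$ and form the $\mathbb{F}_q$-rational conic $D:=(C_1-C_2)/\lambda$, which contains $C_1\cap C_2$. Any $P\in D(\mathbb{F}_q)\setminus(C_1\cap C_2)$ satisfies $C_1(P)=C_2(P)\in\mathbb{F}_q^{\times}$ and so gives a smooth rational point on $X$. Transversality places the four base points in general position (no three collinear), so $D$ cannot be a double line, and its possible ranks give $|D(\mathbb{F}_q)|\in\{1,q+1,2q+1\}$, with the value $1$ (Galois-conjugate pair of lines) occurring only when the intersection point of the two lines is non-base, since transversality forbids a base point at that intersection. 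Setting $S:=|(C_1\cap C_2)(\mathbb{F}_q)|$, a short case check (noting that a Galois-conjugate pair of lines is incompatible with $S=4$, since each line would then carry two rational points) shows that $D$ produces a non-base rational point whenever $q\geq 5$, or $q=3$ with $S\leq 2$. The value $S=3$ is ruled out (a lone non-rational point would have to sit in a Galois orbit of size one), and $S=1$ falls within the $D$-analysis since in every subcase $|D(\mathbb{F}_3)|\geq 1$ and the unique rational point in the conjugate-pair case is non-base.

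The only genuinely delicate case is $q=3$, $S=4$, with $D$ smooth. Here I would apply an $\mathbb{F}_3$-change of coordinates to place the four base points at $[1{:}0{:}0], [0{:}1{:}0], [0{:}0{:}1], [1{:}1{:}1]$; the pencil of conics through them is then $\{d\,xy+e\,xz+f\,yz:d+e+f=0\}$, giving $C_1=d\,xy+e\,xz+f\,yz$ for some $d,e,f\in\mathbb{F}_9^{\times}$ with $d+e+f=0$. The four non-squares of $\mathbb{F}_9^{\times}$ are precisely $\{\pm(1\pm\alpha)\}$ for any $\alpha\in\mathbb{F}_9$ with $\alpha^2=-1$, and a direct enumeration shows that the only triples of non-squares summing to zero are the constant triples $(c,c,c)$. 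But $d=e=f$ would make $C_1=d(xy+xz+yz)$ a scalar multiple of $C_2=\sigma(C_1)$, violating the hypothesis that $|C_1\cap C_2|=4$. Hence at least one of $d,e,f$ is a square, say $d=\gamma^2$. Then $C_1|_{\{z=0\}}=d\cdot xy$ takes the square value $\gamma^2$ at the non-base rational point $[1{:}1{:}0]$, so $N_{\mathbb{F}_{q^2}/\mathbb{F}_q}(\gamma^2)\in(\mathbb{F}_3^{\times})^2$ and we obtain the desired smooth rational point on $X$.

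The main obstacle is precisely this final case. Kaplan's formula only guarantees $\#X(\mathbb{F}_3)\geq S=4$, and when $D$ is smooth all four of its rational points may coincide with the singular locus, so the $D$-argument alone is inconclusive. The nontrivial arithmetic input that makes the proof go through is the combinatorial fact that three nonzero non-squares of $\mathbb{F}_9$ summing to zero must coincide.
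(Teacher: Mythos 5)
Your proof is correct, but it takes a genuinely different route from the paper's. The paper first reduces to $q=3$ via Corollary \ref{cor: number of singular points not congruent to one mod q then have smooth point} and then settles $q=3$ by a finite enumeration: after fixing the four base points it iterates over the $(q^{2}-q)/2=3$ Galois-conjugate pairs of conics in the pencil and asserts that one always finds a rational $P$ with $C_{1}(P)=C_{2}(P)\neq 0$. You replace the enumeration with two conceptual ingredients: the identity $C_{1}(P)C_{2}(P)=\N_{\mathbb{F}_{q^{2}}/\mathbb{F}_{q}}(C_{1}(P))$ together with the auxiliary $\mathbb{F}_{q}$-rational conic $D=(C_{1}-C_{2})/\lambda$ through the base locus, which disposes of every case except $q=3$ with all four nodes rational; and, for that residual case, normalization of the base points to the standard frame plus the observation that three non-squares of $\mathbb{F}_{9}$ summing to zero must coincide. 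Your version buys two things. It treats all Galois orbit structures on the four nodes uniformly, whereas the paper's appeal to the corollary tacitly needs $\delta\not\equiv 1 \bmod q$ (it says nothing when $\delta=1$, orbit type $1+3$). More substantively, your criterion --- that $C_{1}(P)$ be a square in $\mathbb{F}_{q^{2}}^{\times}$ --- is the correct scaling-invariant one: the paper's condition $C_{1}(P)=C_{2}(P)\neq 0$ asks for a non-base rational point on $D$, and this genuinely can fail. For example, with $\alpha^{2}=-1$ and $C_{1}=(\alpha+1)xy+(\alpha-1)xz+\alpha yz$ one gets $D=xy+xz+yz$, whose only $\mathbb{F}_{3}$-points are the four base points, so no such $P$ exists; nevertheless $f=\alpha$ is a square in $\mathbb{F}_{9}^{\times}$ and $[0:1:1:1]$ is a smooth point of $X$, exactly as your argument produces. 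So your proof is not only valid but closes a normalization issue that the paper's computational sketch glosses over.
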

\begin{proof}
The above surface defines a singular del Pezzo surface with a singularity type of $4A_{1}$. By Corollary \ref{cor: number of singular points not congruent to one mod q then have smooth point} we have $X$ has a smooth rational point if $q\neq 3$, hence we can assume $q=3$. We can compute all possible smooth conics over $\mathbb{F}_{q^2} = \mathbb{F}_{q}[x]/(x^2-x-1)$. As we specify 4 points in general position for which the conics have to pass through there are $\#\mathbb{P}^{1}(\mathbb{F}_{q^2})=q^2+1$ choices for $C_{1}$. By assumption $C_{1}$ is not defined over $\mathbb{F}_{q}$, hence we have $q^2-q$ choices for $C_{1}$. Moreover, the $q^2-q$ come in Galois conjugate pairs so we can further reduce our choices for $C_{1}$ to $(q^2-q)/2$. It is now easy to iterate all possible choices for $C_{1}$ and $C_{2}$ and see that there always exists $[x:y:z]\in \mathbb{P}^2(\mathbb{F}_{q})$ such that $C_{2}(x,y,z)=C_{2}(x,y,z)\neq 0$. We can then deduce that $X$ has a smooth rational point as all singular points $[x:y:z:w]$ have the property $C_{2}(x,y,z)=C_{2}(x,y,z)=0$.
\end{proof}
\begin{proposition}\label{prop: 4 singularties}
Let $X$ be a singular del Pezzo surface of degree 2 over $\mathbb{F}_{q}$ with $q$ odd. Suppose $X$ has $4A_{1}$ singularities defined over $\mathbb{F}_{q}$ and $\bar{X}$ also has a singularity type of $4A_{1}$, then $X$ has a smooth point.
\end{proposition}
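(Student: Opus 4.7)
The plan is to reduce immediately to $q=3$ using Corollary \ref{cor: number of singular points not congruent to one mod q then have smooth point}: since $\delta=4$, the congruence $\delta\not\equiv 1\pmod q$ holds except when $q=3$, so the only remaining case is $q=3$. For this I would invoke the classification of the ramification curve (from the lemma immediately preceding Lemma \ref{lemma: Galois conj conics for dP2}): over $\bar{\mathbb{F}}_3$ the ramification curve is either (a) an irreducible nodal cubic plus a line, or (b) the union of two smooth conics. I would split into these two geometric cases.

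In case (a), write $G_4 = L\cdot C$ with $L$ of degree $1$ and $C$ of degree $3$. Since the degrees are distinct, the Galois action cannot permute $L$ and $C$, so both factors are individually defined over $\mathbb{F}_3$. The four singular points of $X$ are the node of $C$ together with the three transverse intersection points of $L\cap C$, and by hypothesis all four are $\mathbb{F}_3$-rational. Because $|L(\mathbb{F}_3)|=4$ and exactly three of those points lie on $C$, the remaining $\mathbb{F}_3$-point of $L$ lies on the branch curve $B$ but away from its singular locus, so it is a smooth point of $B$ and therefore lifts to a smooth $\mathbb{F}_3$-point $(x,y,z,0)$ of $X$.

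In case (b), $G_4=Q_1Q_2$ where $\{Q_1,Q_2\}$ is a Galois-stable pair of smooth conics. If the two conics form a Galois-conjugate pair over $\mathbb{F}_9$, then Lemma \ref{lemma: Galois conj conics for dP2} directly produces a smooth rational point. The remaining subcase is that both $Q_1$ and $Q_2$ are individually defined over $\mathbb{F}_3$; I will show this cannot occur. The four base points $Q_1\cap Q_2$ lie in general position (no three collinear, since a line meets a smooth conic in at most two points) and are all $\mathbb{F}_3$-rational by hypothesis. The pencil of conics through four general $\mathbb{F}_3$-rational points has $|\mathbb{P}^1(\mathbb{F}_3)|=4$ members; the three partitions of the four base points into two pairs give three singular members (pairs of lines), each defined over $\mathbb{F}_3$ since all four base points are rational. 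Hence only one member of the pencil is a smooth conic, and it must be defined over $\mathbb{F}_3$. So two distinct smooth $\mathbb{F}_3$-rational conics through the four points cannot exist, ruling out this subcase.

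The main obstacle is the small-field pencil count in case (b): one needs to check both that the four base points really are in general position (which follows from smoothness of the $Q_i$) and that the three singular members of the pencil are all $\mathbb{F}_3$-rational, so that the unique smooth member of the rational pencil is forced. Once these are in place, the rest of the argument is an elementary count on the branch curve.
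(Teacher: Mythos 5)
Your proposal is correct and follows the same overall route as the paper: reduce to $q=3$ via Corollary \ref{cor: number of singular points not congruent to one mod q then have smooth point}, split according to whether the branch quartic is geometrically a cubic plus a line or a union of two smooth conics, settle the first case by counting the four $\mathbb{F}_3$-points of the line against the three nodes lying on it, and hand the Galois-conjugate-conics case to Lemma \ref{lemma: Galois conj conics for dP2}. The one place you genuinely diverge is the subcase where both smooth conics are individually defined over $\mathbb{F}_3$: the paper claims there is then a rational point of $C_1$ not lying over a singular point of $X$, but at $q=3$ this is delicate, since $\#C_1(\mathbb{F}_3)=q+1=4$ and all four rational points of $C_1$ are exactly the four rational base points $C_1\cap C_2$; the paper's count therefore does not obviously produce a smooth point in this residual case. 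You instead show the subcase is vacuous: the four base points are in general position (no three collinear, as a line meets a smooth conic in at most two points), the pencil of conics through them has exactly three singular members, namely the three line-pairs, all $\mathbb{F}_3$-rational, so only one of the four rational members of the pencil can be smooth and two distinct rational smooth conics cannot both occur. This is a clean and, at $q=3$, more watertight treatment than the paper's; everything else in your argument (in particular that $L$ and the cubic are separately Galois-stable by degree reasons, and that the fourth rational point of $L$ is a smooth point of the branch curve lifting to a smooth point of $X$) matches the paper's reasoning.
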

\begin{proof}
By Corollary \ref{cor: number of singular points not congruent to one mod q then have smooth point} if $q\neq 3$ then $X$ has a smooth rational point, hence we can assume $q=3$. Denote by $\pi:X\rightarrow \mathbb{P}^2$ the anticanonical morphism associated to $X$. If the branch locus $B$ of $\pi$ is geometrically $B=C\cup L$ where $C$ is a cubic and $L$ is a line, by Bezout's Theorem we must have the 3 singular points on $B$ lying on $L$. However, as $L(\mathbb{F}_{q})= 4$ we can then deduce $X$ has a smooth rational point. We are now left with the situation where the branch locus is geometrically a union of two smooth conics, $B=C_{1}\cup C_{2}$. If $C_{1}$ and $C_{2}$ are both defined over $\mathbb{F}_{q}$ there would be a rational point on $C_{1}$ and $C_{2}$ which does not correspond to the image of a singular point on $X$ i.e.\ $X$ has a smooth rational point in this case also. The only case left is where $C_{1}$ and $C_{2}$ are Galois conjugates, however this is dealt with in Lemma \ref{lemma: Galois conj conics for dP2}.
\end{proof}
\subsection{Proof of Theorem \ref{thm: main thm} and Corollary \ref{cor: Main corollary}}\label{subsec: proof of Main Theorem and Corollary }
\begin{proof}[Proof of Theorem \ref{thm: main thm}]
For degree $d\geq 3$, the Theorem follows from Corollaries \ref{cor: degree 6, 7 and 8}, \ref{cor: degree 5}, \ref{cor: degree 4} and \ref{cor: dp3}. Running algorithm \ref{algorithm: determine traces on singular del pezzo} through all possible cases of singular del Pezzo surfaces of degree 2 we see that the only cases where we can have all rational points being singular away from $q=2$ and $4$ is $A_{3}, [4A_{1}]', [4A_{1}]''$. However, these cases are dealt with in Propositions \ref{prop: A_1 sing for dP2}, \ref{prop: A_3 sing for dP2} and \ref{prop: 4 singularties}.
\end{proof}
\begin{proof}[Proof of Corollary \ref{cor: Main corollary}]
Theorem \ref{thm: main thm} shows that a singular del Pezzo surface of degree $d\geq 3$ over a finite field, always has a smooth rational point. To show the corollary, it is sufficient now to apply \cite[Thm 9.1]{CT88} for $d\geq 5$, \cite[Lemma 7.19a]{CT88} for $d=4$.
\end{proof}

\subsection{Counterexamples}\label{subsec: counterexamples}
Before we give some examples of singular del Pezzo surfaces of degree 2 with only singular rational points, we explain how we obtained these examples.
\begin{algorithm} Let $X$ be a singular del Pezzo surface of degree 2 over a finite field $k$. Denote by $X'\rightarrow X$ the minimal desingularisation of $X$ and suppose the number of $(-2)$-curves on $\bar{X}'$ is less than 7. Then by Corollary \ref{cor: separable morphism for dP2s} the anticanonical morphism $f:X\rightarrow \mathbb{P}^2$ is separable. Denote by $R$ the ramification divisor of $f$ then \[
f\mid_{X\backslash R}:X\backslash R\rightarrow \mathbb{P}^2\backslash f(R)
\] is étale, hence all singular points on $X$ lie on $R$. In the case that the singular subscheme of $X$, denoted by $X^{\text{sing}}$ has $\#X^{\text{sing}}(k)=\#X(k)$ we must have the following two conditions holding \[(X\backslash R)(k)=\emptyset \text{ and } \#X(k)=\#R(k).
\] As a rational point on the ramification curve maps to a rational point on the branch curve $B$ of $f$ we have $\#B(k)=\#R(k)$. Moreover, as we are working over a finite field we can search for every possible such $B$ with $\#B(k)=\#X(k)$.

\end{algorithm}
\begin{lemma}[{$A_{1}$-singularity}]\label{lemma: A1}
Consider the surface \[
X:w^2+w(y^2+yz+z^2)+(x^2yz+xyz^2+y^4+y^2z^2+z^4)\subset \mathbb{P}(1,1,1,2)
\] over $\mathbb{F}_{2}$. Then $X$ has an $A_{1}$-singularity type and no smooth rational point.
\end{lemma}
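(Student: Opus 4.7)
The plan is to establish two complementary facts: (a) the only $\mathbb{F}_2$-point of $X$ is $[1:0:0:0]$, and (b) this point is an $A_1$ singularity that is the unique singularity of $\bar{X}$. Together these yield the lemma: the only rational point is singular, so there is no smooth $\mathbb{F}_2$-point, and the geometric singularity type is $A_1$.

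For (a), I would exploit the fibration $[x{:}y{:}z{:}w]\mapsto[x{:}y{:}z]$ to enumerate the seven points of $\mathbb{P}^2(\mathbb{F}_2)$ and, for each one, solve the quadratic $w^2+wG_2(x,y,z)+G_4(x,y,z)=0$ in $\mathbb{F}_2$. At $[1{:}0{:}0]$ one has $G_2=G_4=0$, yielding the unique lift $[1{:}0{:}0{:}0]$. A direct check shows that at each of the other six points $[0{:}1{:}0]$, $[0{:}0{:}1]$, $[1{:}1{:}0]$, $[1{:}0{:}1]$, $[0{:}1{:}1]$, $[1{:}1{:}1]$ the values $G_2=G_4=1$ occur, so the fibre equation reduces to $w^2+w+1=0$, which is irreducible over $\mathbb{F}_2$. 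Hence $X(\mathbb{F}_2)=\{[1{:}0{:}0{:}0]\}$.

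For (b), I would apply the Jacobian criterion. In characteristic $2$ the partial derivatives of $f:=w^2+wG_2+G_4$ reduce to
\[
\partial_w f = y^2+yz+z^2,\quad \partial_x f = yz^2,\quad \partial_y f = wz+x^2z+xz^2,\quad \partial_z f = wy+x^2y.
\]
The vanishing of $\partial_x f$ forces $y=0$ or $z=0$, and each case combined with $\partial_w f=0$ gives $y=z=0$; then $f=0$ gives $w=0$, so the singular locus of $\bar{X}$ is the single point $[1{:}0{:}0{:}0]$. In the affine chart $x=1$, expanding $f(1,y,z,w)$ around the origin yields the quadratic part $w^2+yz$, with all other terms of degree $\geq 3$. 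This is exactly the standard local normal form of an $A_1$ rational double point, valid in every characteristic. Hence $\bar{X}$ has singularity type $A_1$, concentrated at the unique $\mathbb{F}_2$-point, and $X$ admits no smooth rational point.

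The only real subtlety is the classification of rational double points in characteristic $2$, but since the quadratic part $w^2+yz$ already matches the canonical $A_1$ form on the nose, no further normalisation is needed and the entire argument is a short mechanical verification.
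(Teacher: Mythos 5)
Your proof is correct, and its overall shape matches the paper's: both arguments first enumerate $\mathbb{P}^2(\mathbb{F}_2)$ to show that the fibre equation $w^2+w+1=0$ is irreducible over every rational point of the base except $[1{:}0{:}0]$, so that $X(\mathbb{F}_2)=\{[1{:}0{:}0{:}0]\}$, and then identify that point as an $A_1$ singularity. Where you differ is in the second step: the paper delegates the singularity identification to Magma (blowing up the point and observing a single irreducible $(-2)$-curve in the exceptional locus), whereas you work by hand, using the characteristic-$2$ Jacobian criterion to pin down the singular locus and then reading off the tangent cone $w^2+yz$ in the chart $x=1$ as the standard $A_1$ normal form. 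Your route buys two things: the argument is verifiable without software, and the Jacobian computation additionally shows that $[1{:}0{:}0{:}0]$ is the \emph{only} geometric singular point of $\bar{X}$ (so the geometric singularity type really is a single $A_1$), a point the paper's proof leaves implicit in the Magma run. The one step you wave at --- that a surface singularity with nondegenerate quadratic leading term $w^2+yz$ is an $A_1$ rational double point in characteristic $2$ --- is genuinely standard and survives the higher-order terms (one blow-up already resolves it, with exceptional curve the smooth conic $w'^2+y'z'=0$ of self-intersection $-2$, which is the characterisation of $A_1$ used in Proposition \ref{prop: resolution of RDP surfaces and Dynkin diagrams}), so no gap results.
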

\begin{proof}
It easy to check that $X(\mathbb{F}_{2})=\{[1:0:0:0]\}$ and that this $\mathbb{F}_{2}$-point is singular. Using Magma we see that when we blow up this point the exceptional divisor is a single irreducible curve $E$ of self intersection $-2$, hence $E$ is isomorphic to $\mathbb{P}^1$. We can then deduce that $X$ has an $A_{1}$ singularity type and with no smooth rational point.
\end{proof}

\begin{lemma}[{$3A_{1}$-singularity}]\label{lemma: 3A1}
Consider the surface \[
X:w^2+w(x^2)+(x^4+x^2yz+xy^2z+xyz^2+y^2z^2) \subset \mathbb{P}(1,1,1,2)
\] over $\mathbb{F}_{2}$. Then $X$ has an $3A_{1}$-singularity type and no smooth rational point.
\end{lemma}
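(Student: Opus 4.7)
The plan is to attack the statement by brute force: enumerate $X(\mathbb{F}_{2})$, check that every listed point is singular, and finally identify the singularity type as $3A_{1}$. Because $\mathbb{F}_{2}^{\times}=\{1\}$, the $15$ non-zero tuples in $\mathbb{F}_{2}^{4}$ give $15$ distinct $\mathbb{F}_{2}$-points of $\mathbb{P}(1,1,1,2)$, so I simply evaluate the defining polynomial $F=w^{2}+wx^{2}+x^{4}+x^{2}yz+xy^{2}z+xyz^{2}+y^{2}z^{2}$ at each. A direct calculation shows $X(\mathbb{F}_{2})=\{[0{:}1{:}0{:}0],\,[0{:}0{:}1{:}0],\,[0{:}1{:}1{:}1]\}$.

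Next, I would apply the Jacobian criterion. In characteristic $2$ the partials are
\[
\partial_{x}F=y^{2}z+yz^{2},\quad \partial_{y}F=x^{2}z+xz^{2},\quad \partial_{z}F=x^{2}y+xy^{2},\quad \partial_{w}F=x^{2},
\]
all of which vanish at each of the three $\mathbb{F}_{2}$-points above; in particular $X$ has no smooth $\mathbb{F}_{2}$-point. To pin down the geometric singular locus, I would use $\partial_{w}F=x^{2}=0$ to force $x=0$, reducing $\partial_{x}F=yz(y+z)=0$ and $F(0,y,z,w)=w^{2}+y^{2}z^{2}=0$ to exactly the three cases $y=0$, $z=0$, $y=z$; each case yields (up to the weighted projective scaling) precisely one of the three points already found, so the geometric singular locus is these three $\mathbb{F}_{2}$-rational points and nothing more.

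For the singularity type, I would work in the affine chart $y=1$. At $[0{:}1{:}0{:}0]$ the local equation is $w^{2}+wx^{2}+x^{4}+x^{2}z+xz+xz^{2}+z^{2}$, with quadratic tangent cone $w^{2}+xz+z^{2}$. In characteristic $2$ the substitution $u=w+z$ converts this to $u^{2}+xz$, which is the $A_{1}$ normal form $u^{2}+xz=0$. A short sequence of further formal substitutions (first $z\mapsto z(1+z)^{-1}$ to absorb the $xz^{2}$ term into $xz$, then a substitution of the form $z\mapsto z+ux+x^{3}$ to absorb the remaining $ux^{2}$ and $x^{4}$ terms) reduces the full local equation formally to $u^{2}+xz$, confirming the $A_{1}$ type. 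For the point $[0{:}1{:}1{:}1]$ one substitutes $z\mapsto z+1$, $w\mapsto w+1$ and finds, after a straightforward characteristic-$2$ cancellation, the same local equation as at $[0{:}1{:}0{:}0]$; the remaining point $[0{:}0{:}1{:}0]$ is handled by the $y\leftrightarrow z$ symmetry of $F$. Combining the three $A_{1}$'s gives singularity type $3A_{1}$.

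The main obstacle is the last step: in characteristic $2$ one cannot complete the square, so identifying the local form as $A_{1}$ requires careful formal bookkeeping. A clean alternative, modelled on Lemma \ref{lemma: A1}, is simply to blow up each singular point in Magma and check that the exceptional divisor consists of a single smooth rational curve of self-intersection $-2$; everything else in the proof is routine.
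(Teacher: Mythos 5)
Your proposal is correct, and its skeleton matches the paper's: enumerate $X(\mathbb{F}_{2})$ (the same three points appear), verify each is singular, and then identify the singularity type. The difference lies entirely in the last step. The paper delegates it to Magma: it resolves the singularities and reads off three disjoint irreducible exceptional curves of self-intersection $-2$, whence $3A_{1}$. You instead compute the geometric singular locus directly from the partials (correctly finding that it consists of exactly the three rational points) and identify each local type by hand via characteristic-$2$ normal forms; I checked the key computations and they hold, in particular the local equation in the chart $y=1$ is $w^{2}+wx^{2}+x^{4}+x^{2}z+xz+xz^{2}+z^{2}$ and is literally invariant under $(z,w)\mapsto(z+1,w+1)$, so the point $[0{:}1{:}1{:}1]$ reduces to the origin case, and the $y\leftrightarrow z$ symmetry handles $[0{:}0{:}1{:}0]$. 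Your approach buys a self-contained, computer-free verification (and additionally pins down the full geometric singular locus, which the paper leaves implicit), at the cost of the formal bookkeeping you flag as the main obstacle. That bookkeeping can be shortened: after the substitution $u=w+z$ the quadratic part is the nondegenerate form $u^{2}+xz$, whose projectivized zero locus is a smooth conic even in characteristic $2$; a double point whose projectivized tangent cone is a smooth conic is resolved by a single blow-up with exceptional curve a smooth rational $(-2)$-curve, so it is an $A_{1}$ point irrespective of the higher-order terms, and your explicit substitutions $z'=z(1+z)$ and $z''=z'+ux+x^{3}$ (which do work as stated) become unnecessary.
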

\begin{proof}
Note $X(\mathbb{F}_{2})=\{[0:1:0:0],[0:0:1:0],[0:1:1:1]\}$. It is easy to check that the $\mathbb{F}_{2}$-points are singular and using Magma we see that when we resolve these singularities. The exceptional divisor contains three disjoint irreducible curves $E_{1},E_{2},E_{3}$ each of self intersection $-2$, hence $E_{i}$ is isomorphic to $\mathbb{P}^1$ for $i=1,2,3$. We can then deduce that $X$ has three singular points each of type $A_{1}$.
\end{proof}
\begin{lemma}[{$D_{4}$-singularity}]\label{lemma: D4}
Consider the surface \[
X:w^2+w(y^2 + yz + z^2)+(xy^2z + xyz^2 + y^4 + y^2z^2 + z^4) \subset \mathbb{P}(1,1,1,2)
\] over $\mathbb{F}_{2}$. Then $X$ has an $D_{4}$-singularity type and no smooth rational point.
\end{lemma}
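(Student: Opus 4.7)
The plan mirrors the proofs of Lemmas~\ref{lemma: A1} and~\ref{lemma: 3A1}: I will exhibit every $\mathbb{F}_2$-point of $X$ by direct enumeration, verify that each such point is singular, and then compute the minimal desingularisation explicitly (most practically in Magma) to read off the Dynkin type.

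First I would list the points of $\mathbb{P}(1,1,1,2)(\mathbb{F}_2)$ in the affine charts $x=1$, $y=1$, $z=1$, plug each candidate $(x,y,z,w)\in \{0,1\}^4$ into the defining equation, and keep those that vanish. My expectation, guided by the structure of the two previous counterexamples, is that $X(\mathbb{F}_2)$ consists of a single point; a natural guess is $[1:0:0:0]$, since $y=z=0$ kills $G_2$ and every monomial of $G_4$ listed here, leaving $w^2=0$. One then checks by computing the partial derivatives (together with the weighted-projective Euler relation, so really by working in an affine chart centred at the point) that this point is singular.

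Next, to pin down the singularity type, I would pass to the affine chart $x=1$, so $X$ becomes the affine hypersurface
\[
w^2 + w(y^2+yz+z^2) + (y^2z + yz^2 + y^4 + y^2z^2 + z^4)=0
\]
centred at the origin, and feed this to Magma's resolution-of-singularities routines. The expectation is that the exceptional divisor of the minimal resolution $\pi:X'\to X$ is a union of four smooth $(-2)$-curves whose dual graph is the $D_4$ Dynkin diagram: three $\mathbb{P}^1$'s each meeting a central $\mathbb{P}^1$ transversally and disjoint from each other. Proposition~\ref{prop: resolution of RDP surfaces and Dynkin diagrams} then certifies that the singularity is of type $D_4$.

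The main obstacle, as in the previous two lemmas, is really a bookkeeping one: confirming by machine that the iterated blow-up in characteristic $2$ produces exactly the four $(-2)$-curves in a $D_4$ configuration and no extra components, since in characteristic $2$ the usual completing-the-square trick for degree-$2$ del Pezzos is unavailable and the resolution can require several blow-ups. Once the computer algebra output is verified, the conclusion that $X$ has no smooth $\mathbb{F}_2$-point is automatic: the enumeration in the first step produces every $\mathbb{F}_2$-point of $X$, and each one has just been shown to be singular.
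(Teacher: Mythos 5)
Your proposal is correct and follows essentially the same route as the paper: enumerate $X(\mathbb{F}_2)$ directly (finding only $[1:0:0:0]$), verify that this point is singular, and use Magma to compute the resolution and confirm that the exceptional locus consists of four $(-2)$-curves in the $D_4$ configuration. The paper's proof is exactly this computation, with the no-smooth-point conclusion following immediately from the fact that the sole rational point is singular.
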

\begin{proof}
We can check that $X(\mathbb{F}_{2})=\{[1:0:0:0]\}$. It is easy to show that the $\mathbb{F}_{2}$-point is singular and using Magma we see that when we resolve these singularities the exceptional divisor contains one connected component with the following intersection graph 
\begin{center}
\begin{tikzpicture}
\draw (0,0) -- (2,0);
\draw (2,0) -- (3,-1);
\draw (2,0) -- (3,1);
\node[draw, circle,fill=white,label = $E_{1}$]at (0,0){};
\node[draw, circle,fill=white,label = $E_{2}$]at (2,0){};
\node[draw, circle,fill=white,label = $E_{3}$]at (3,1){};
\node[draw, circle,fill=white,label = $E_{4}$]at (3,-1){};
\end{tikzpicture}
\end{center} and each $E_{i}$ has self intersection $-2$ for $i=1,2,3,4$. Hence, $X$ has a $D_{4}$-singularity type.
\end{proof}

\begin{proof}[Proof of Theorem \ref{thm: main thm 2}]
The statement follows from Lemmas \ref{lemma: A1},\ref{lemma: 3A1} and \ref{lemma: D4}.
\end{proof}

\bibliographystyle{amsalpha}{}
\bibliography{referencesdelPezzo}
\end{document}